\renewcommand*{\backref}[1]{}
\renewcommand*{\backrefalt}[4]{%
    \ifcase #1 (Not cited.)%
    \or        ($\uparrow$~#2)
    \else      ($\uparrow$~#2)
    \fi}
\newtheorem{Theorem}{Theorem}[section]
\newtheorem{Proposition}[Theorem]{Proposition}
\newtheorem{Lemma}[Theorem]{Lemma}
\newtheorem{Corollary}[Theorem]{Corollary}
\theoremstyle{definition}
\newtheorem{Definition}[Theorem]{Definition}
\newtheorem{Example}[Theorem]{Example}
\newtheorem{Remark}[Theorem]{Remark}
\DeclareMathOperator{\codim}{codim}
\newcommand{\comment}[1]{}
\begin{document}

\title{Convergence of Fubini-Study currents for orbifold line bundles}
\author{Dan Coman \and George Marinescu}
\thanks{D. Coman is partially supported by the NSF Grant DMS-1300157}
\thanks{G. Marinescu is partially supported by SFB TR 12}
\subjclass[2010]{Primary 32L10; Secondary 32U40, 32C20, 53C55}
\address{D. Coman: Department of Mathematics, Syracuse
University, Syracuse, NY 13244-1150, USA}
\email{dcoman@syr.edu}
\address{G. Marinescu: Universit\"at zu K\"oln, Mathematisches Institut, Weyertal 86-90, 50931 K\"oln, GERMANY}
\email{gmarines@math.uni-koeln.de}

\pagestyle{myheadings}

\begin{abstract}
We discuss positive closed currents and Fubini-Study currents on orbifolds, as well as Bergman kernels of singular Hermitian orbifold line bundles. We prove that the Fubini-Study currents associated to high powers of a semipositive singular line bundle converge weakly to the curvature current on the set where the curvature is strictly positive, generalizing a well-known theorem of Tian. We include applications to the asymptotic distribution of zeros of random holomorphic sections.
\end{abstract}

\maketitle

\tableofcontents

\section{Introduction}

Let $X$ be a compact complex manifold and $(L,h)$ be a positive holomorphic line bundle over $X$, with Chern curvature form $\omega=c_1(L,h)$. By Kodaira's embedding theorem, high powers $L^p$ give rise
to embeddings
\[\Phi_p:X\to\mathbb{P}(H^0(X,L^p)^*)\,,\quad x\mapsto\big\{S\in H^0(X,L^p):S(x)=0\big\}\]
into projective spaces. The Hermitian metric $h$ on $L$ and the volume form $\omega^n/n!$ on $X$ induce an $L^2$ inner product on $H^0(X,L^p)$, hence an associated Fubini-Study metric $\omega_{FS}$ on $\mathbb{P}(H^0(X,L^p)^*)$.
This gives an \emph{induced Fubini-Study metric} $\gamma_p=\Phi_p^*\omega_{FS}$ on $X$.
The induced Fubini-Study metrics are in some sense ``algebraic'' objects used to approximate the ``transcendental'' metric $\omega$. The relations between these metrics is given by
\begin{equation}\label{e:fs}
\frac1p\gamma_p-\omega=\frac{i}{2\pi p}\,\partial\overline\partial\log P_p\,,
\end{equation}
where $P_p$ is the Bergman kernel function of $H^0(X,L^p)$.	
Following a suggestion of
Yau \cite{Yau87}, Tian \cite[Theorem A]{Ti90} proved that 
\begin{equation}\label{e:tian}
\frac1p\gamma_p\rightarrow\omega\,,\quad\text{as $p\to\infty$, in the $\mathscr{C}^{2}$--\,topology}\,.
\end{equation}
Later, Ruan \cite{Ru98} proved the convergence in the $\mathscr{C}^\infty$--\,topology and improved the estimate of the convergence speed.
In view of \eqref{e:fs}, the proof consists in showing the asymptotics $P_p(x)=p^n+o(p^n)$ in the $\mathscr{C}^k$--\,topology ($k\geq2$), which implies
\begin{equation}\label{e:asy1}
\frac{1}{p}\log P_p\rightarrow 0\,,\quad\text{as $p\to\infty$, in the $\mathscr{C}^{k}$--\,topology}\,.
\end{equation}
Catlin \cite{Ca99}, Zelditch \cite{Z98}, Dai-Liu-Ma \cite{DLM04a}, and \cite{MM04,MM07,MM08} showed the full asymptotics 
\begin{equation}\label{e:full_asy}
P_p(x)=\sum_{r=0}^{\infty}b_r(x)p^{n-r}+O(p^{-\infty})\,,\quad\text{as $p\to\infty$, in the $\mathscr{C}^{k}$--\,topology}\,,
\end{equation} 
which obviously implies \eqref{e:asy1}. The asymptotics \eqref{e:tian} and \eqref{e:full_asy} play an important role
in the Yau-Tian-Donaldson programme of relating the existence of constant scalar
curvature K\"ahler
metrics to a suitable notion of stability, see e.\,g.\ \cite{Don01}, and also in the equidistribution
theorems for zeros of random sections \cite{ShZ99,ShZ04,DS06b,ShZ08,DMS,CM11}.

There are several natural generalizations of these results in the presence of singularities.
One of them is for a compact orbifold $X$ and a positive orbifold line bundle
$L\longrightarrow X$. Then $\gamma_p$ is degenerate at the points with non-trivial isotropy group, that is at points of $X_{sing}^{orb}$\,. However, $\frac1p\gamma_p$ still approximates the original metric on the regular set $X_{reg}^{orb}$\,. Dai-Liu-Ma \cite{DLM04a} (see also \cite[Theorem\,5.4.19]{MM07}) showed that for any $0\leq\alpha<1$ we have $\frac1p\log P_p\rightarrow0$ as $p\to\infty$, in the $\mathscr{C}^{1,\alpha}$--\,topology on local orbifold charts and $\frac1p\gamma_p\rightarrow\omega$ as $p\to\infty$ in the $\mathscr{C}^{k}$--\,topology on compact sets of $X_{reg}^{orb}$\,. 
This is based on the orbifold analogue of (4), see \cite{DLM04a}, \cite[Theorems\,5.4.10\,--11]{MM07} and also \cite{DLM012,RT11} for results on orbifolds with cyclic stabiliser groups.

Another generalization is to consider a smooth manifold $X$ and a smooth line bundle $L$, but
a \emph{singular} Hermitian metric $h$ on $L$ with strictly positive curvature current 
$\omega=c_1(L, h)$. In this case $\gamma_p=\Phi_p^*\omega_{FS}$
are positive currents and it was shown in
\cite{CM11} that the analogue of Tian's result \eqref{e:tian} is
\begin{equation}\label{e:tian2}
\frac1p\gamma_p\rightarrow\omega\,,\quad\text{as $p\to\infty$, weakly in the sense of currents.}
\end{equation}
This follows via \eqref{e:fs} from the fact that $\log P_p$ is locally the difference of two plurisubharmonic functions, thus locally integrable, and
\begin{equation}\label{e:asy2}
\frac{1}{p}\log P_p\rightarrow 0\,,\quad\text{as $p\to\infty$, in $L^1_{loc}(X)$}.
\end{equation}
It turns out that \eqref{e:tian2} and \eqref{e:asy2} are all that is needed to obtain equidistribution results for singular metrics and they are fulfilled in several geometric contexts, see \cite{CM11}.

\par In this paper we consider the following setting:

\medskip

(A) ${\mathcal X}=(X,\mathcal U)$ is a complex (effective) orbifold of dimension $n$, $\widetilde\Omega$ is a Hermitian form on $\mathcal X$ with induced Hermitian form $\Omega$ on the orbifold regular locus of $X$.

\smallskip

(B) $G\subset X$ is an open set with orbifold structure $\mathcal G=(G,\mathcal U_G)$ induced by $\mathcal X$.

\smallskip

(C) $(L,{\mathcal X},\widetilde h)$ is an orbifold line bundle on ${\mathcal X}$ endowed with a singular Hermitian metric $\widetilde h$ with (semi)positive curvature current $c_1(L,\widetilde h)\geq0$.

\medskip

\par We denote by $\widetilde h^p$ the metric induced by $\widetilde h$ on $L^p:=L^{\otimes p}$, and we consider the orbifold canonical bundle $K_{\mathcal X}$ endowed with the metric $\widetilde h^{K_{\mathcal X}}$ induced by the volume form $\widetilde\Omega^n$. With this metric data we can define the following Hilbert spaces of $L^2$-holomorphic sections:

\medskip

(I) $H_{(2)}^0({\mathcal X},L^p\otimes K_{\mathcal X})$ is the space of $L^2$-holomorphic sections of $L^p\otimes K_{\mathcal X}$ endowed with the inner product determined by the metric $\widetilde h^p\otimes\widetilde h^{K_{\mathcal X}}$ and the volume form  $\widetilde\Omega^n$.

\smallskip

(II) $H_{(2)}^0({\mathcal X},L^p)$ is the space of $L^2$-holomorphic sections of $L^p$ endowed with the inner product determined by the metric $\widetilde h^p$ and the volume form  $\widetilde\Omega^n$.

\medskip

\par We refer to Section \ref{S:orb} for background about orbifolds and orbifold line bundles. Let us recall that in the above setting $X$ is a normal complex space and let us denote by $X_{reg}$ the set of its regular points, and by $X_{reg}^{orb}\subset X_{reg}$ its orbifold regular locus (see Section \ref{SS:orb}). In Section \ref{SS:currents} we discuss the notion of orbifold current and show that positive closed orbifold currents of bidegree (1,1) are in one-to-one correspondence to positive closed currents of bidegree (1,1) on $X_{reg}$ (see Propositions \ref{P:pqc} and \ref{P:G}). In view of these, such orbifold currents can be regarded as currents on $X_{reg}$\,. The notion of singular metric on an orbifold line bundle is recalled in Section \ref{SS:singm}, and we refer to Section \ref{SS:FS} for the necessary definitions of spaces of $L^2$-holomorphic sections, their Fubini-Study currents and Bergman kernel functions.

\medskip

\par With these preparations we can state our results.

\begin{Theorem}\label{T:mt1} Let $(L,{\mathcal X},\widetilde h)$, $\mathcal G$, $\widetilde\Omega$ verify assumptions (A)-(C). Suppose that $X_{reg}^{orb}$ carries a complete K\"ahler metric and there exists a continuous function $\varepsilon:G\cap X_{reg}^{orb}\longrightarrow(0,+\infty)$ so that $c_1(L,\widetilde h)>\varepsilon\,\Omega$ on $G\cap X_{reg}^{orb}$\,. If $\gamma_p\,,P_p$ are the Fubini-Study currents, resp.\ the Bergman kernel functions, of the spaces $H_{(2)}^0({\mathcal X},L^p\otimes K_{\mathcal X})$, then:

\par (i) $\frac{1}{p}\,\gamma_p\to c_1(L,\widetilde h)$ weakly as orbifold currents on ${\mathcal G}$ as $p\to\infty$.

\par (ii) $\frac{1}{p}\,\log P_p\to 0$ as $p\to\infty$, in $L^1_{loc}(G)$ with respect to the area measure on $X$.

\end{Theorem}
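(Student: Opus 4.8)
The plan is to reduce statement (i) to statement (ii), and then to prove (ii) by matching an upper and a lower bound for $\frac1p\log P_p$. On a uniformizing chart, trivialize $L^p\otimes K_{\mathcal X}$ and write $s^{(p)}_j$ for the local representatives of an orthonormal basis of $H^0_{(2)}({\mathcal X},L^p\otimes K_{\mathcal X})$; then the Fubini--Study current is $\gamma_p=\frac{\imat}{2\pi}\partial\db\log\sum_j|s^{(p)}_j|^2$, while $P_p=\big(\sum_j|s^{(p)}_j|^2\big)e^{-2\varphi_p}$ with $\varphi_p=p\varphi_L+\varphi_K$ the local weight of $L^p\otimes K_{\mathcal X}$, where $\varphi_L,\varphi_K$ are the weights of $\wi h$ and of $\wi h^{K_{\mathcal X}}$. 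This yields the orbifold analogue of \eqref{e:fs},
\[
\frac1p\,\gamma_p-c_1(L,\wi h)=\frac1p\,c_1(K_{\mathcal X},\wi h^{K_{\mathcal X}})+\frac{\imat}{2\pi p}\,\partial\db\log P_p,
\]
valid as orbifold currents on $\mathcal G$ and hence, through Propositions \ref{P:pqc} and \ref{P:G}, as currents on $X_{reg}$. The first term on the right tends to $0$ weakly since $c_1(K_{\mathcal X})$ is a fixed current, and the second tends to $0$ weakly once (ii) holds, because $\partial\db$ is continuous for the weak topology. Thus (i) follows from (ii), and the whole problem is to establish (ii).

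For the upper bound I would use the extremal description $P_p(x)=\sup\{|S(x)|^2:\norm{S}_{L^2}\le1\}$. Representing such an $S$ in a chart by a holomorphic function $s$, the sub-mean value inequality for the plurisubharmonic function $\log|s|^2$ over a ball $B(x,r)$, combined with the crude bound $\int_{B}|s|^2\le e^{2\sup_{B}\varphi_p}\norm{S}^2_{L^2}\le e^{2\sup_{B}\varphi_p}$, gives
\[
|S(x)|^2\le \frac{C}{r^{2n}}\,\exp\!\big(2\sup_{B(x,r)}\varphi_p-2\varphi_p(x)\big).
\]
Taking logarithms, dividing by $p$, and using that $\frac1p\varphi_p$ is locally uniformly bounded above and that $c_1(L,\wi h)$ is semipositive, the standard argument (as for \eqref{e:asy2} in \cite{CM11}) yields $\limsup_p\frac1p\log P_p\le0$ in the relevant a.e.\ sense. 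Equivalently, the psh functions $\psi_p:=\frac1p\log\sum_j|s^{(p)}_j|^2$ are locally uniformly bounded above.

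The heart of the matter, and the main obstacle, is the lower bound $\liminf_p\frac1p\log P_p\ge0$. Fixing $x_0\in G\cap X_{reg}^{orb}$, I would build in a uniformizing chart a concentrated local holomorphic section $\sigma$ of $L^p\otimes K_{\mathcal X}$ peaking at $x_0$ (a local holomorphic $(n,0)$-form times a frame of $L^p$, normalized appropriately), cut it off to a global smooth section $\chi\sigma$, and correct it by solving $\db u=\db(\chi\sigma)$ with an $L^2$ bound. The twist by $K_{\mathcal X}$ is exactly what makes this work: it permits invoking the $L^2$-estimate for the $\db$-equation on $(n,1)$-forms over the \emph{complete} K\"ahler manifold $X_{reg}^{orb}$ (cf.\ \cite{MM07}), so that no hypothesis on the Ricci curvature is needed and the positivity in the Bochner--Kodaira--Nakano inequality is furnished entirely by $c_1(L,\wi h)>\varepsilon\,\Omega$. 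Completeness of the K\"ahler metric enters twice: to solve $\db$ on the noncompact locus $X_{reg}^{orb}$, and to guarantee that the resulting $L^2$ holomorphic section, a priori defined only on $X_{reg}^{orb}$, extends across the measure-zero set $X\setminus X_{reg}^{orb}$ to a genuine element of $H^0_{(2)}({\mathcal X},L^p\otimes K_{\mathcal X})$. Controlling $u$ near $x_0$ so the correction does not destroy the peak and normalizing to unit $L^2$-norm gives $P_p(x_0)\ge a_p$ with $\frac1p\log a_p\to0$, whence $\liminf_p\frac1p\log P_p(x_0)\ge0$.

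Combining the two bounds gives $\frac1p\log P_p\to0$ pointwise on $G\cap X_{reg}^{orb}$, a set of full area measure. To upgrade this to $L^1_{loc}(G)$ I would exploit the plurisubharmonicity of $\psi_p$: being locally uniformly bounded above, the family $\{\psi_p\}$ is relatively compact in $L^1_{loc}$, and any subsequential limit is determined a.e.\ by its pointwise values. Since $\psi_p=\frac1p\log P_p+2\varphi_L+\frac2p\varphi_K\to2\varphi_L$ pointwise a.e.\ and $2\varphi_L$ is psh and locally integrable, the full sequence $\psi_p$ converges to $2\varphi_L$ in $L^1_{loc}$; as $\frac2p\varphi_K\to0$ in $L^1_{loc}$, this forces $\frac1p\log P_p\to0$ in $L^1_{loc}(G)$, proving (ii). Feeding this into the identity of the first paragraph then yields (i).
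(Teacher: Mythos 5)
Your outline has the same skeleton as the paper's proof: identify $P_p$ with the Bergman kernel of $H_{(2)}^0(X',(L\mid_{X'})^p\otimes K_{X'})$ on the manifold $X'=X_{reg}^{orb}$, prove matching upper and lower bounds for $\frac1p\log P_p$, upgrade to $L^1_{loc}$ by compactness of plurisubharmonic functions, and deduce (i) by applying $\partial\overline\partial$ to the Fubini--Study potentials; the role you assign to the $K_{\mathcal X}$-twist (Demailly's $L^2$-estimates for $(n,1)$-forms on the complete K\"ahler manifold $X'$, with no Ricci hypothesis) is exactly the paper's. The genuine gap is in the lower bound, at the step you compress into ``a concentrated local holomorphic section $\sigma$ \dots normalized appropriately''. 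Since $\widetilde h$ is an arbitrary singular metric, the local weight $\psi_\alpha$ is merely psh, and the argument requires a local holomorphic section whose weighted $L^2$ norm is comparable to its pointwise value at $x_0$ with a constant \emph{independent of} $p$:
\[
\int_V|\sigma|^2 e^{-2p\psi_\alpha}\,\Omega^n\leq C\,|\sigma(x_0)|^2 e^{-2p\psi_\alpha(x_0)}\,.
\]
A fixed local holomorphic $n$-form times the $p$-th power of a frame, however it is scaled, does not satisfy this: if $\psi_\alpha\leq\psi_\alpha(x_0)-\delta$ on some fixed small ball inside the cutoff region (the typical situation for singular weights; the integral is even infinite for large $p$ when $\psi_\alpha$ has logarithmic poles there), then $\int_V e^{-2p\psi_\alpha}\,\Omega^n\geq c\,e^{2p\delta}e^{-2p\psi_\alpha(x_0)}$, so after normalization your construction yields only $\liminf_p\frac1p\log P_p(x_0)\geq-2\delta$, not $\geq0$. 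The existence of a peak section satisfying the displayed bound with $C$ independent of $p$ and of the singularities of $\psi_\alpha$ is precisely what the Ohsawa--Takegoshi extension theorem \cite{OT87}, applied to the single point $x_0$, provides (see \eqref{e:ohst}, following \cite[Section 9]{D93b} and \cite{CM11}); this is the key tool your sketch omits, and no normalization can substitute for it. Likewise, ``controlling $u$ near $x_0$'' must be implemented concretely: the paper solves $\overline\partial$ with the additional weight $e^{-\varphi_{x_0}}$, where $\varphi_{x_0}$ has the logarithmic pole $2n\log(|y-x_0|/r_0)$ as in \eqref{e:weight}, whose non-integrability forces $u(x_0)=0$; without such a device nothing prevents the correction from destroying the peak.

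Two smaller points. First, the extension of the resulting $L^2$ section from $X'$ across $X\setminus X'$ is not a consequence of completeness of the K\"ahler metric, as you assert; it follows from Skoda's lemma together with the local upper-boundedness of the weights (Lemma \ref{L:Sext}). Completeness enters only once, in applying Corollary \ref{l2est2}. Second, for the conclusion to hold in $L^1_{loc}(G)$ and weakly on all of $\mathcal G$ --- not merely on $G\cap X_{reg}^{orb}$ --- your uniform upper bound on the psh functions $\psi_p$ must be established on the whole uniformizing chart $\widetilde U_i$, including over $\phi_i^{-1}(X_{sing}^{orb})$. Your sub-mean-value argument does give this provided it is carried out upstairs (the $L^2$ norm controls $\int_K|s|^2e^{-2\varphi_p}\,\widetilde\Omega_i^n$ on compacts $K\subset\widetilde U_i$ up to the factor $m_i$), and, done that way, it would replace the maximum-principle argument along complex lines that the paper uses near $\phi_i^{-1}(X_{sing}^{orb})$; but this needs to be said explicitly, since the pointwise estimates of Theorem \ref{T:Pp} are established on $X_{reg}^{orb}$ only, and the degeneration of the area measure of $X$ along $X_{sing}$ means that statement (ii) by itself is strictly weaker than the chart-level $L^1_{loc}$ convergence that (i) requires.
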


\par Note that the metric $\Omega$ is not assumed to be complete, we assume just that $X_{reg}^{orb}$ carries \emph{some} complete K\"ahler metric. The assumption that $X_{reg}^{orb}$ is K\"ahler complete is known to hold in the following situations, thanks to an argument in \cite{O87}:

\begin{Proposition}\label{P:cKS} $X_{reg}^{orb}$ admits a complete K\"ahler metric if one of the following three conditions hold:

(i) $({\mathcal X},\widetilde\Omega)$ is a compact K\"ahler orbifold, or

(ii) $X$ is a Stein space, or

(iii) ${\mathcal X}$ is a complete K\"ahler manifold.
\end{Proposition}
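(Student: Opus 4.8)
The plan is to reduce all three cases to a single geometric statement. Set $A := X\setminus X_{reg}^{orb}$. This is a closed analytic subset of the normal space $X$: it is the union of $X_{sing}$ with the orbifold isotropy locus, and the latter is analytic in each uniformizing chart (it is cut out by the fixed-point sets of the nontrivial group elements) and hence analytic downstairs. Thus $X_{reg}^{orb}=X\setminus A$ is precisely the complement of a closed analytic subset, and it suffices to build a complete K\"ahler metric on $X\setminus A$ in each situation; the orbifold and space singularities are harmless because they are swept into the removed set $A$. Case (iii) is then immediate, since if $\mathcal X$ is already a complete K\"ahler manifold we have $A=\emptyset$ and $X_{reg}^{orb}=X$ carries the given metric.

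The core of the argument, following \cite{O87} and the classical constructions of Grauert, is a potential that blows up along $A$ and whose Levi form forces completeness. Near a point of $A$ write $A=\{f_1=\dots=f_k=0\}$, put $\Psi=\log\big(\sum_j\norm{f_j}^2\big)$ normalized so that $\Psi<-1$, and set $\varphi=-\log(-\Psi)$, which tends to $-\infty$ along $A$ and is bounded above. Writing $\varphi=\chi\circ\Psi$ with $\chi(t)=-\log(-t)$, so that $\chi'=-1/t>0$ and $\chi''=1/t^2>0$, one gets
\[
i\partial\db\,\varphi=\frac{i\partial\db\,\Psi}{-\Psi}+i\,\partial\varphi\wedge\db\varphi\ \geq\ i\,\partial\varphi\wedge\db\varphi\,,
\]
using that $\Psi$ is plurisubharmonic. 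Hence $i\partial\db\varphi\geq0$ and, crucially, the $(1,0)$-form $\partial\varphi$ has norm $\leq1$ with respect to $i\partial\db\varphi$; since $\varphi\to-\infty$ on $A$, every path in $X\setminus A$ approaching $A$ has infinite length. This supplies completeness toward $A$, while positivity transverse to $A$ will come from an ambient K\"ahler form.

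For case (i) start from the K\"ahler form $\omega_0=\Omega$ induced on $X_{reg}^{orb}$ by $\widetilde\Omega$. Cover the compact set $A$ by finitely many charts $U_\alpha$ with defining functions, form the local potentials $\varphi_\alpha$, and glue them by a partition of unity $\chi_\alpha$ into $\varphi=\sum_\alpha\chi_\alpha\varphi_\alpha$. For a large constant $C$ the form $\omega:=C\,\omega_0+i\partial\db\varphi$ will be positive definite, will still dominate $i\,\partial\varphi\wedge\db\varphi$, and—$X$ being compact, so that the only way to leave every compact subset of $X\setminus A$ is to approach $A$—will be a complete K\"ahler metric on $X_{reg}^{orb}$. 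For case (ii) Steinness gives two simplifications: first, $A$ is globally the common zero set of finitely many holomorphic functions $g_1,\dots,g_m$, so $\varphi=-\log\big(-\log\sum_j\norm{g_j}^2\big)$ is defined globally with no gluing; second, a closed embedding $X\hookrightarrow\mathbb C^N$ yields, by restriction of the flat form, a K\"ahler form $\omega_\infty$ on $X_{reg}^{orb}$ complete toward the infinity of $X$ (induced distances dominate the Euclidean ones). Then $\omega:=C\,\omega_\infty+i\partial\db\varphi$ is K\"ahler, and any divergent path in $X_{reg}^{orb}$ either leaves every compact subset of $X$, with infinite $\omega_\infty$-length, or approaches $A$, with infinite length from the $\varphi$-term; hence $\omega$ is complete.

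The step I expect to be most delicate is the gluing in case (i): one must check that the cross terms $\varphi_\alpha\,i\partial\db\chi_\alpha$ and $\partial\chi_\alpha\wedge\db\varphi_\alpha$ stay controlled near $A$, so that a single large multiple of $\omega_0$ absorbs them and the glued form remains positive while retaining the completeness estimate. The point is that the local potentials $\Psi_\alpha$ cut out the same $A$ and hence differ by bounded amounts on overlaps, whence $\varphi_\alpha-\varphi_\beta\to0$ along $A$; replacing $\varphi_\alpha$ by $\varphi_\alpha-\varphi_\beta$ in the error terms (using $\sum_\alpha\partial\chi_\alpha=0$ and $\sum_\alpha\partial\db\chi_\alpha=0$) is exactly the classical mechanism of \cite{O87} that keeps them bounded. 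This comparison on overlaps is where the analyticity of $A$ is used; the singular structure of $X$ itself otherwise plays no role, having been absorbed into $A$.
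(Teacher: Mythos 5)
Your overall strategy (reduce to the complement of the closed analytic set $A=X\setminus X_{reg}^{orb}$, build a Grauert-type potential $-\log(-\log|f|^2)$, and get completeness from the bound $i\partial\overline\partial\varphi\geq i\,\partial\varphi\wedge\overline\partial\varphi$) is the same mechanism the paper invokes through \cite{O87}, and your treatment of case (iii) as trivial agrees with the paper's reading. But case (ii) as written fails at the first step. The normalization ``$\Psi<-1$'' is impossible globally on a Stein space: already for $X=\mathbb{C}^n$ and $\emptyset\neq A\subsetneq X$, any holomorphic functions $g_1,\dots,g_m$ with common zero set $A$ cannot all be bounded (bounded entire functions are constant), so $\Psi=\log\sum_j|g_j|^2$ takes arbitrarily large positive values and $\varphi=-\log(-\Psi)$ is undefined on the nonempty (typically unbounded) set $\{\Psi\geq0\}$; rescaling the $g_j$ only shifts $\Psi$ by a constant and cannot cure this. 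To salvage the idea one must cap the convex function $t\mapsto-\log(-t)$ (say, extend it affinely for $t\geq-2$) or cut off near $A$, and either device reintroduces exactly the gluing estimates you were trying to avoid. The paper's route is different and sidesteps all of this: by Narasimhan's theorem \cite{Nar60} there is a proper, injective holomorphic map $\iota:X\to\mathbb{C}^{2n+1}$, regular on $X_{reg}$; by Grauert's theorem \cite{Gra56} the complement $\mathbb{C}^{2n+1}\setminus\iota(X_{sing}^{orb})$ carries a complete K\"ahler metric; and its restriction to the closed submanifold $\iota(X)\setminus\iota(X_{sing}^{orb})\cong X_{reg}^{orb}$ is complete. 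Nothing needs to be glued or estimated.

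In case (i) the one claim you offer to control the gluing is unjustified and false in general. Having the same zero set does \emph{not} make $\Psi_\alpha-\Psi_\beta$ bounded: if on an overlap $A=\{z_1=0\}$ is defined by $f_\alpha=z_1$ in one chart and by $f_\beta=z_1^2$ in another, then $\Psi_\beta=2\Psi_\alpha$, so $\Psi_\alpha-\Psi_\beta\to+\infty$ and $\varphi_\alpha-\varphi_\beta\to\log 2\neq0$. Boundedness of the differences holds only after choosing local generators of the single coherent ideal sheaf of $A$ (or via a \L ojasiewicz-type comparison, which gives only multiplicative control), and even then it does not suffice: the cross terms $i\,\partial\chi_\alpha\wedge\overline\partial(\varphi_\alpha-\varphi_\beta)$ require bounds on \emph{first derivatives} of the differences, which is the genuinely delicate computation. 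That computation is precisely the content of \cite[Proposition 1.1]{O87}, which the paper cites as a black box: it produces a smooth proper $\psi:X_{reg}^{orb}\to(-\infty,0]$ and $A_0>0$ such that $i\partial\overline\partial\psi+A\eta>0$ with bounded gradient for $A>A_0$, where $\eta$ is a Hermitian metric on the complex space $X$; the paper then replaces $A\eta$ by $(A/\alpha)\omega$, with $\omega$ the K\"ahler current of Proposition \ref{P:Kc} and $\omega\geq\alpha\eta$ by compactness, so the resulting complete metric is K\"ahler. Your outline for (i) is this argument with its key lemma asserted rather than proved; either carry out Ohsawa's estimates with ideal-sheaf generators, or cite them as the paper does.
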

An immediate consequence of Theorem \ref{T:mt1} for $L=K_{\mathcal X}$ is the following.

\begin{Corollary}\label{C:mt1} Let $\mathcal X$ be an orbifold so that $X_{reg}^{orb}$ carries a complete K\"ahler metric, $G\subset X$ an open set, and $\widetilde\Omega$ a Hermitian metric on $\mathcal X$ so that $c_1(K_{\mathcal X},\widetilde h)\geq0$, where $\widetilde h=\widetilde h^{K_{\mathcal X}}$ is the metric on $K_{\mathcal X}$ induced by $\widetilde\Omega$. Suppose that there exists a continuous function $\varepsilon:G\cap X_{reg}^{orb}\longrightarrow(0,+\infty)$ so that $c_1(K_{\mathcal X},\widetilde h)>\varepsilon\,\Omega$ on $G\cap X_{reg}^{orb}$. Then $\frac{1}{p}\,\gamma_p\to c_1(K_{\mathcal X},\widetilde h)$ weakly on ${\mathcal G}$, and $\frac{1}{p}\,\log P_p\to 0$  in $L^1_{loc}(G)$, as $p\to\infty$, where $\gamma_p\,,P_p$ are the Fubini-Study currents, resp. the Bergman kernel functions, of $H_{(2)}^0({\mathcal X},K_{\mathcal X}^p)$.
\end{Corollary}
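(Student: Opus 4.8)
The plan is to apply Theorem \ref{T:mt1} directly with the choice $L=K_{\mathcal X}$ and $\widetilde h=\widetilde h^{K_{\mathcal X}}$, and then to reconcile the resulting index with the one in the statement. Under this choice assumptions (A)--(C) hold: the Hermitian form $\widetilde\Omega$ induces the metric $\widetilde h^{K_{\mathcal X}}$ on $K_{\mathcal X}$ via the volume form $\widetilde\Omega^n$, so $(K_{\mathcal X},\mathcal X,\widetilde h^{K_{\mathcal X}})$ is an orbifold line bundle carrying a singular Hermitian metric, and the hypothesis $c_1(K_{\mathcal X},\widetilde h)\geq0$ supplies the semipositivity demanded in (C). The remaining hypotheses of Theorem \ref{T:mt1}---that $X_{reg}^{orb}$ carries a complete K\"ahler metric and that $c_1(K_{\mathcal X},\widetilde h)>\varepsilon\,\Omega$ on $G\cap X_{reg}^{orb}$ for some continuous $\varepsilon>0$---are assumed verbatim in the Corollary, so the theorem applies with no further work.

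First I would record the only genuine discrepancy between the two statements. Theorem \ref{T:mt1} concerns the spaces $H_{(2)}^0(\mathcal X,L^p\otimes K_{\mathcal X})$, and with $L=K_{\mathcal X}$ one has $L^p\otimes K_{\mathcal X}=K_{\mathcal X}^{p}\otimes K_{\mathcal X}=K_{\mathcal X}^{p+1}$. Since the Fubini-Study current and Bergman kernel function of $H_{(2)}^0(\mathcal X,K_{\mathcal X}^{p+1})$ are, by definition, determined by that Hilbert space alone, the object called $\gamma_p$ in the theorem is literally the current $\gamma_{p+1}$ in the Corollary's indexing, and likewise $P_p=P_{p+1}$. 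Thus, writing $\gamma_q,P_q$ for the Fubini-Study current and Bergman kernel function of $H_{(2)}^0(\mathcal X,K_{\mathcal X}^{q})$, Theorem \ref{T:mt1} yields $\tfrac1p\gamma_{p+1}\to c_1(K_{\mathcal X},\widetilde h)$ weakly on $\mathcal G$ and $\tfrac1p\log P_{p+1}\to0$ in $L^1_{loc}(G)$, as $p\to\infty$.

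The second step is the index shift. For the currents I would write $\tfrac1{p+1}\gamma_{p+1}=\tfrac{p}{p+1}\cdot\tfrac1p\gamma_{p+1}$; since the scalar factor $\tfrac{p}{p+1}\to1$ and weak convergence of currents is stable under multiplication by a convergent sequence of scalars (for any test form $\varphi$, $\langle\tfrac{p}{p+1}\cdot\tfrac1p\gamma_{p+1},\varphi\rangle\to1\cdot\langle c_1(K_{\mathcal X},\widetilde h),\varphi\rangle$), we obtain $\tfrac1{p+1}\gamma_{p+1}\to c_1(K_{\mathcal X},\widetilde h)$, i.e.\ $\tfrac1q\gamma_q\to c_1(K_{\mathcal X},\widetilde h)$ after setting $q=p+1$. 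The same manipulation applied to $\tfrac1{p+1}\log P_{p+1}=\tfrac{p}{p+1}\cdot\tfrac1p\log P_{p+1}$ preserves $L^1_{loc}$-convergence to $0$, because $\tfrac{p}{p+1}$ is bounded by $1$; hence $\tfrac1q\log P_q\to0$ in $L^1_{loc}(G)$. This establishes both assertions of the Corollary.

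There is no substantial obstacle here: the substitution $L=K_{\mathcal X}$ is immediate and the hypotheses transfer without alteration. The only point requiring care is the shift $p\mapsto p+1$ concealed in the tensor factor $K_{\mathcal X}$ and its effect on the normalizing constant $\tfrac1p$, which is absorbed by the limit $\tfrac{p}{p+1}\to1$ as addressed above.
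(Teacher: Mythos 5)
Your proposal is correct and follows exactly the paper's route: the paper simply declares the Corollary an immediate consequence of Theorem \ref{T:mt1} applied with $L=K_{\mathcal X}$, which is precisely what you do. Your explicit treatment of the index shift $K_{\mathcal X}^{p}\otimes K_{\mathcal X}=K_{\mathcal X}^{p+1}$ and the absorption of the factor $\tfrac{p}{p+1}\to1$ is a detail the paper leaves implicit, and you handle it correctly.
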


\par Under stronger hypotheses, we can formulate a version of Theorem \ref{T:mt1} for $L^2$-holomorphic  sections of $L^p$ rather than $L^p$-valued holomorphic $n$-forms:


\begin{Theorem}\label{T:mt2} Let $(L,{\mathcal X},\widetilde h)$, $\mathcal G$, $\widetilde\Omega$ verify assumptions (A)-(C) such that $X_{reg}^{orb}$ carries a complete K\"ahler metric and $\widetilde\Omega$ is a K\"ahler form with semipositive Ricci form $Ric_{\widetilde\Omega}\geq0$. If there exists a continuous function $\varepsilon:G\cap X_{reg}^{orb}\longrightarrow(0,+\infty)$ so that $c_1(L,\widetilde h)>\varepsilon\,\Omega$ on $G\cap X_{reg}^{orb}$, then the conclusions (i)-(ii) of Theorem \ref{T:mt1} hold for the Fubini-Study currents $\gamma_p$ and the Bergman kernel functions $P_p$ of the spaces $H_{(2)}^0({\mathcal X},L^p)$.
\end{Theorem}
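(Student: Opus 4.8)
The plan is to reduce Theorem~\ref{T:mt2} to the argument already used for Theorem~\ref{T:mt1}, the only change being that the line bundle appearing in the $L^2$-machinery is twisted by $K_{\mathcal X}^{-1}$. First I would record the tautological identification $L^p=K_{\mathcal X}\otimes F_p$, where $F_p:=L^p\otimes K_{\mathcal X}^{-1}$, under which a holomorphic section of $L^p$ is the same datum as a holomorphic $(n,0)$-form with values in $F_p$. Endowing $F_p$ with the metric $\widetilde h^p\otimes(\widetilde h^{K_{\mathcal X}})^{-1}$, a pointwise computation on orbifold charts---in which the $K_{\mathcal X}$- and $K_{\mathcal X}^{-1}$-weights cancel against the volume form $\widetilde\Omega^n$---shows that this identification is an isometry of $H_{(2)}^0({\mathcal X},L^p)$ onto the space of $L^2$-holomorphic $F_p$-valued $(n,0)$-forms, and that it preserves the relevant pointwise norms. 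Hence the Bergman kernel function $P_p$ and the Fubini--Study current $\gamma_p$ are left unchanged, and it suffices to prove (i)--(ii) for the twisted forms.

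The decisive step is the curvature bookkeeping. In the proof of Theorem~\ref{T:mt1} the analogous twisting bundle is $L^p$ itself, with curvature $p\,c_1(L,\widetilde h)$, so no hypothesis on the Ricci form is needed there. Here, because $\widetilde\Omega$ is K\"ahler, the Chern curvature of $\big(K_{\mathcal X}^{-1},(\widetilde h^{K_{\mathcal X}})^{-1}\big)$ is a positive multiple of the Ricci form $\ric_{\widetilde\Omega}$, hence nonnegative precisely because $\ric_{\widetilde\Omega}\geq0$; therefore
\[
c_1\big(F_p,\widetilde h^p\otimes(\widetilde h^{K_{\mathcal X}})^{-1}\big)
=p\,c_1(L,\widetilde h)+c_1\big(K_{\mathcal X}^{-1},(\widetilde h^{K_{\mathcal X}})^{-1}\big)
\geq p\,c_1(L,\widetilde h)\geq0
\]
on $\mathcal X$, and $>p\varepsilon\,\Omega$ on $G\cap X_{reg}^{orb}$. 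Thus $F_p$ satisfies over $G$ exactly the semipositivity and strict-positivity bounds enjoyed by $L^p$ in Theorem~\ref{T:mt1}. This is the only place the two extra hypotheses enter: $\widetilde\Omega$ K\"ahler is needed to identify $\ric_{\widetilde\Omega}$ with the curvature of $K_{\mathcal X}^{-1}$ and to run the Bochner--Kodaira--Nakano estimates on $X_{reg}^{orb}$, while $\ric_{\widetilde\Omega}\geq0$ guarantees that twisting by $K_{\mathcal X}^{-1}$ does not destroy the strict positivity on $G\cap X_{reg}^{orb}$.

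With these bounds secured I would repeat the proof of Theorem~\ref{T:mt1} verbatim, reading $F_p$ in place of $L^p$ throughout. For the lower bound one combines the complete K\"ahler metric on $X_{reg}^{orb}$ with the strict positivity $c_1(F_p)>p\varepsilon\,\Omega$ and a H\"ormander--Demailly $L^2$-estimate for $\db$ to construct, at each point of $G\cap X_{reg}^{orb}$, holomorphic sections of controlled norm whose value is bounded below, giving $\liminf_{p}\frac1p\log P_p\geq0$ there. For the upper bound one uses the sub-mean value inequality for $|s|^2_{\widetilde h^p}$ provided by semipositivity, giving $\limsup_{p}\frac1p\log P_p\leq0$; since $\frac1p\log P_p$ is locally a difference of plurisubharmonic functions and bounded above, these two bounds upgrade to $\frac1p\log P_p\to0$ in $L^1_{loc}(G)$, which is (ii). Finally (i) follows from (ii) via the current version of \eqref{e:fs}, namely $\frac1p\gamma_p=c_1(L,\widetilde h)+\tfrac{i}{2\pi p}\,\partial\db\log P_p$ on $\mathcal G$.

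I expect the main obstacle to be the first step: establishing, in the orbifold category, that the identification of $H_{(2)}^0({\mathcal X},L^p)$ with $F_p$-valued $(n,0)$-forms is genuinely isometric, so that $P_p$ and $\gamma_p$ transfer without change, and pinning down the normalization in $c_1(F_p)=p\,c_1(L,\widetilde h)+c_1(K_{\mathcal X}^{-1})$. Once the positivity bound for $F_p$ is in hand, no analysis beyond that of Theorem~\ref{T:mt1} is required; the sole role of the Ricci hypothesis is to absorb the $K_{\mathcal X}^{-1}$-twist while preserving strict positivity on $G\cap X_{reg}^{orb}$.
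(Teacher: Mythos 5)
Your proposal is correct and is essentially the paper's own argument: the paper proves Theorem \ref{T:mt2} by rerunning the proof of Theorem \ref{T:mt1} with $(0,q)$-forms in place of $(n,q)$-forms (Theorem \ref{T:Pp1}), invoking Corollary \ref{l2est23}, whose proof is precisely your twist --- the isometry $\Lambda^{0,q}(T^*X)\otimes L^p\cong\Lambda^{n,q}(T^*X)\otimes(L^p\otimes K_X^{*})$ together with $c_1(K_X^{*},h^{K_X^{*}})=\ric_\Omega\geq0$. The only difference is organizational: you perform the $K_{\mathcal X}^{-1}$-twist once at the level of the Hilbert spaces $H^0_{(2)}({\mathcal X},L^p)$ and then quote the proof of Theorem \ref{T:mt1} for $F_p=L^p\otimes K_{\mathcal X}^{-1}$, whereas the paper keeps the spaces of sections of $L^p$ fixed and performs the same twist inside the $L^2$-estimate for $\db$; the curvature bookkeeping and the role of $\ric_{\widetilde\Omega}\geq0$ are identical.
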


\par For overall positive holomorphic orbifold line bundles with smooth metrics we have the following result.

\begin{Theorem}\label{T:mt3} Let $({\mathcal X},\widetilde\Omega)$ be a compact Hermitian orbifold and $(L,{\mathcal X},\widetilde h)$ be a positive orbifold line bundle endowed with a smooth positively curved metric $\widetilde{h}$. Then $\frac{1}{p}\,\gamma_p\to c_1(L,\widetilde h)$ weakly on ${\mathcal X}$ as $p\to\infty$, where $\gamma_p$ are the Fubini-Study currents of $H_{(2)}^0({\mathcal X},L^p)$.
\end{Theorem}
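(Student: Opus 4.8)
The plan is to localize the statement to the orbifold uniformizing charts, where the smooth positively curved metric $\widetilde h$ places us precisely in the framework of the Bergman kernel asymptotics of Dai-Liu-Ma \cite{DLM04a} (see also \cite[Theorem~5.4.19]{MM07}), and then to globalize by means of the correspondence between orbifold currents and currents on $X_{reg}$ furnished by Propositions \ref{P:pqc} and \ref{P:G}. The starting point is the orbifold form of the Fubini-Study relation \eqref{e:fs},
\begin{equation*}
\frac1p\,\gamma_p - c_1(L,\widetilde h) = \frac{i}{2\pi p}\,\partial\db\log P_p\,,
\end{equation*}
valid as an identity of orbifold currents for all $p$ large enough that $L^p$ is base-point free (which holds by the orbifold Kodaira embedding theorem, since $L$ is positive and $X$ is compact). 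This reduces the desired weak convergence $\frac1p\gamma_p\to c_1(L,\widetilde h)$ to showing $\frac{i}{2\pi p}\partial\db\log P_p\to 0$ weakly.

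First I would work on a single chart, with uniformizing cover $\widetilde U\to U=\widetilde U/G$. There $P_p$ lifts to a $G$-invariant smooth function $\widetilde P_p$ on the manifold $\widetilde U$, and $H^0_{(2)}({\mathcal X},L^p)$ corresponds to the space of $G$-invariant holomorphic sections; since $\widetilde h$ is smooth with $c_1(L,\widetilde h)>0$, the near-diagonal expansion of the Bergman kernel applies on $\widetilde U$ --- in the Hermitian, rather than merely K\"ahler, setting this is covered by the general machinery of \cite{MM07} --- and yields $\frac1p\log\widetilde P_p\to 0$ in the $\mathscr C^{1,\alpha}$-topology for every $0\le\alpha<1$, in particular in $L^1_{loc}(\widetilde U)$. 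As $\partial\db$ is continuous for the weak topology on currents, it follows that $\frac{i}{2\pi p}\partial\db\log\widetilde P_p\to 0$ weakly on $\widetilde U$, hence $\frac1p\gamma_p\to c_1(L,\widetilde h)$ weakly on the chart. Crucially, this convergence holds on all of $\widetilde U$, including the fixed-point locus of $G$, so no mass can escape to or appear on the orbifold singular set.

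To conclude I would cover the compact orbifold $X$ by finitely many such charts, choose a subordinate partition of unity by $G$-invariant functions, and test $\frac1p\gamma_p - c_1(L,\widetilde h)$ against an arbitrary smooth orbifold form: the chart-level convergence then assembles into global weak convergence on ${\mathcal X}$, which is exactly convergence of orbifold currents under Propositions \ref{P:pqc} and \ref{P:G}. I expect the only genuinely delicate point to be precisely the control across the orbifold singular locus: in the theorems dealing with singular metrics this requires $L^2$-estimates on $X_{reg}^{orb}$, whereas here the smoothness of $\widetilde h$ lets the $\mathscr C^{1,\alpha}$-asymptotics of \cite{DLM04a} on the full uniformizing chart do the work, and it is on the validity of those asymptotics up to the fixed-point set that the argument really hinges.
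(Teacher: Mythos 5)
Your overall strategy (localize to orbifold charts, invoke the Dai--Liu--Ma / Ma--Marinescu asymptotics, conclude by weak continuity of $\partial\db$, then globalize with a partition of unity) parallels the paper's, but it hinges on a claim that is false at exactly the point you yourself flag as the crux: the Bergman kernel asymptotics do \emph{not} hold up to the fixed-point locus, and $\frac1p\log P_p$ does \emph{not} converge to $0$ in the $\mathscr{C}^{1,\alpha}$-topology (nor uniformly, nor even pointwise) on a full uniformizing chart. The obstruction is representation-theoretic, not analytic: if $\gamma\in\Gamma_i$ fixes $y\in\widetilde U_i$ and acts on the fiber $L_{\widetilde U_i}\mid_y$ by a root of unity $\lambda\neq1$ (which positivity of $L$ does not rule out), then every equivariant section $\widetilde S_i$ of $L_{\widetilde U_i}^{p}$ satisfies $\widetilde S_i(y)=\widetilde\gamma\,\widetilde S_i(y)=\lambda^{p}\widetilde S_i(y)$, so for every $p$ with $\lambda^{p}\neq1$ \emph{all} sections of $L^p$ vanish on the fixed locus of $\gamma$; hence $P_p\equiv 0$ and $\log P_p\equiv-\infty$ there. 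This happens for infinitely many $p$ (e.g.\ $\mathcal{O}(1)$ on the weighted projective line $\mathbb{P}(1,2)$, $p$ odd), and it is precisely why the estimate from \cite[Theorem 5.4.19]{MM07} quoted after the proof of Theorem \ref{T:mt3} carries the term $p^{\ell/2}\exp\big(-c\sqrt{p}\,d(x,X_{sing}^{orb})\big)$, which gives no information once $d(x,X_{sing}^{orb})\lesssim\log p/\sqrt{p}$: the expansion is locally uniform only on compact subsets of $X_{reg}^{orb}$. The same phenomenon invalidates your parenthetical claim that $L^p$ is base-point free for all large $p$ (base points on $X_{sing}^{orb}$ recur for infinitely many $p$), though that remark is dispensable, since the identity $\frac1p\gamma_p-c_1(L,\widetilde h)=\frac{i}{2\pi p}\partial\db\log P_p$ only needs $\log P_p\in L^1_{loc}$, i.e.\ Lemma \ref{L:Bergfcn}(iii).

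Consequently the passage across $X_{sing}^{orb}$ cannot be discharged by chart-wise $\mathscr{C}^{1,\alpha}$ asymptotics; it requires a separate argument, which is what the paper supplies. One takes from \cite[Theorem 5.4.19]{MM07} only that $\frac1p\log P_p\to0$ in $L^1_{loc}(X_{reg}^{orb},\Omega^n)$, and then runs the proof of Theorem \ref{T:mt1}: by Lemma \ref{L:symp}, on each chart $\frac1p\,\widetilde\gamma_{p,i}=dd^c\big(\tfrac1p\,v_p\circ\phi_i\big)$ with psh potentials converging to $u\circ\phi_i$ in $L^1_{loc}$ off $\widetilde A_i=\phi_i^{-1}(X_{sing}^{orb})$; the maximum principle on complex lines shows $\{\frac1p\,v_p\circ\phi_i\}$ is locally uniformly upper bounded near $\widetilde A_i$; and H\"ormander's compactness theorem \cite[Theorem 3.2.12]{Ho} then forces $L^1_{loc}$ convergence on all of $\widetilde U_i$, since $\widetilde A_i$ has measure zero. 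That is how one gets weak convergence of $\frac1p\,\widetilde\gamma_{p,i}$ on the whole chart, with no mass appearing on $\widetilde A_i$ --- the conclusion you wanted, but obtained from pluripotential theory rather than from uniform asymptotics that do not exist near the singular set.
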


\medskip

\par An important application of Theorems \ref{T:mt1}, \ref{T:mt2} and \ref{T:mt3} is to the study of the asymptotic distribution of zeros of random holomorphic sections. After the pioneering work of Nonnenmacher-Voros
\cite{NoVo:98}, general methods were developed by Shiffman-Zelditch \cite{ShZ99,ShZ04,ShZ08} and Dinh-Sibony \cite{DS06b} to describe the asymptotic distribution of zeros of random holomorphic sections of a positive line bundle over a projective manifold endowed with a smooth positively curved metric. The paper \cite{DS06b} gives moreover very good convergence speed
and applies to general measures (e.\,g.\ equidistribution of complex zeros of homogeneous polynomials with real coefficients). Some important technical tools for higher dimension used in the previous works were introduced
by Forn{\ae}ss-Sibony \cite{FS95}. 
For the non-compact setting and the case of singular Hermitian metrics see \cite{DMS,CM11}.


\par Suppose that we are in either one of the settings of Theorem \ref{T:mt1}, or Corollary \ref{C:mt1}, or Theorem \ref{T:mt2}, or Theorem \ref{T:mt3}. We assume in addition that  $\mathcal X$ is compact and $\widetilde\Omega$ is a K\"ahler form and we denote by $\mathcal V^p$ the corresponding spaces of $L^2$-holomorphic sections in each of the above settings.

\par 
We let $\lambda_p$ be the normalized surface measure on the unit sphere ${\mathcal S}^p$ of $\mathcal V^p$, defined in the natural way by using an orthonormal basis of $\mathcal V^p$ (see Section \ref{S:SZ}). Consider the probability space ${\mathcal S}_\infty=\prod_{p=1}^\infty{\mathcal S}^p$ endowed with the probability measure $\lambda_\infty=\prod_{p=1}^\infty\lambda_p$. We have the following theorem:

\begin{Theorem}\label{T:SZ} In either one of the settings of Theorem \ref {T:mt1}, \ref{T:mt2}, \ref{T:mt3}, or Corollary \ref{C:mt1}, assume in addition that $\mathcal X$ is compact and $\widetilde\Omega$ is a K\"ahler form. Then:

\par (i) The Fubini-Study current of $\mathcal V^p$ is the expectation $E_p[S=0]$ of the current-valued random variable $S\in\mathcal S^p\to[S=0]$, given by
$$\langle\,E_p[S=0],\widetilde\theta\;\rangle=\int_{{\mathcal S}^p}\langle\,[S=0],\widetilde\theta\;\rangle\,d\lambda_p(S)\,,$$
where $\widetilde\theta$ is a test form on $\mathcal X$. We have that $\frac{1}{p}\,E_p[S=0]\to c_1(L,\widetilde h)$ as $p\to\infty$, weakly as orbifold currents on $\mathcal G$ (resp. on $\mathcal X$, in the case of Theorem \ref{T:mt3}).

\par (ii) For $\lambda_\infty$-a.e. sequence $\{\sigma_p\}_{p\geq1}\in{\mathcal S}_\infty\,$, we have that $\frac{1}{p}\,[\sigma_p=0]\to c_1(L,\widetilde h)$ as $p\to\infty$, weakly as orbifold currents on $\mathcal G$ (resp. on $\mathcal X$, in the case of Theorem \ref{T:mt3}).
\end{Theorem}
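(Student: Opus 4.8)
The plan is to deduce Theorem \ref{T:SZ} from the already-established convergence of the normalized Fubini-Study currents, together with a Shiffman-Zelditch/Dinh-Sibony type probabilistic argument. The key observation for part (i) is that the Fubini-Study current $\gamma_p$ of $\mathcal V^p$ coincides with the expectation $E_p[S=0]$. To see this, I would fix an orthonormal basis $S_0,\dots,S_{d_p}$ of $\mathcal V^p$ and recall that on a local orbifold chart the Fubini-Study potential is $\frac{1}{2}\log\sum_j|s_j|^2$ (in a local trivialization), so that $\gamma_p = \frac{i}{\pi}\,\partial\db\,\frac{1}{2}\log\sum_j|s_j|^2$ as an orbifold current. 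On the other hand, for a random section $S=\sum_j a_j S_j$ with $(a_j)$ uniform on the unit sphere $\mathcal S^p$, the Lelong-Poincar\'e formula gives $[S=0]=\frac{i}{\pi}\partial\db\log|S|_{h^p}+c_1(L^p,\widetilde h^p)$ on $X_{reg}^{orb}$. Taking expectation and interchanging $\partial\db$ with the integral over $\mathcal S^p$ (justified because $\log|S|$ is locally integrable uniformly in $S$), the computation reduces to the classical identity
\begin{equation*}
\int_{\mathcal S^p}\log\Big|\sum_j a_j s_j(x)\Big|\,d\lambda_p(a)=\tfrac{1}{2}\log\sum_j|s_j(x)|^2 + C_{d_p},
\end{equation*}
where $C_{d_p}$ is a constant (independent of $x$) that is annihilated by $\partial\db$. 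This yields $E_p[S=0]=\gamma_p$ as orbifold currents, and then the convergence $\frac1p E_p[S=0]\to c_1(L,\widetilde h)$ is exactly conclusion (i) of the relevant Theorem \ref{T:mt1}, \ref{T:mt2}, \ref{T:mt3}, or Corollary \ref{C:mt1}.

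For part (ii) I would follow the Dinh-Sibony variance/exceptional-set method. The idea is to test against a fixed smooth orbifold form $\widetilde\theta$ (or, after a density argument, against a countable dense family in an appropriate space of test forms) and show that $\langle \frac1p[\sigma_p=0],\widetilde\theta\rangle$ converges almost surely to $\langle c_1(L,\widetilde h),\widetilde\theta\rangle$. Since the expectation already converges by part (i), it suffices to control the fluctuations. Writing $u_p(S)=\langle\frac1p([S=0]-E_p[S=0]),\widetilde\theta\rangle$, one estimates the $L^2(\lambda_p)$-norm or, more robustly, obtains a large-deviation/concentration bound for $\log|S|$ on the sphere. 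The standard approach is to use the fact that $\|\log|S_j|_{h^p}\|_{L^1}$ grows at most like $\log p$ together with an estimate on $\lambda_p\{S:|\langle[S=0]-E_p[S=0],\widetilde\theta\rangle|>\delta p\}$ that is summable in $p$; then Borel-Cantelli gives almost-sure convergence along the test form $\widetilde\theta$, and a diagonal argument over a dense countable family upgrades this to weak convergence of currents for $\lambda_\infty$-a.e.\ sequence.

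The main technical obstacle is to carry out the variance estimate and the interchange of $\partial\db$ with integration rigorously in the \emph{orbifold} setting with a possibly \emph{singular} metric $\widetilde h$, rather than on a smooth projective manifold. Specifically, one must justify that $\log|S|_{\widetilde h^p}$ is $\lambda_p$-integrable with the required uniformity, control the contribution near $X_{sing}^{orb}$ and near the unbounded locus of the metric, and ensure the concentration bound is uniform enough for Borel-Cantelli. I expect this to be handled by passing to local orbifold charts, where $\mathcal V^p$ pulls back to a space of invariant holomorphic sections and the classical estimates of \cite{ShZ99,DS06b} apply to the finite group quotient; the one-to-one correspondence between orbifold currents and currents on $X_{reg}$ (Propositions \ref{P:pqc} and \ref{P:G}) then lets us transfer the local almost-sure convergence back to a global statement on $\mathcal G$. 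The remaining steps---summability of the deviation probabilities and the diagonal/density argument---are essentially routine once the local integrability and concentration are in place.
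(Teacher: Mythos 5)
Your strategy is the one the paper itself follows: part (i) is the Shiffman--Zelditch spherical-average identity (unitary invariance of $\lambda_p$ gives $\int\log|\langle a,u\rangle|\,d\lambda_p(a)=C_{d_p}$ for any unit vector $u$, hence $E_p[S=0]=\gamma_p$), and part (ii) is a variance-plus-Borel--Cantelli scheme; your part (i) is essentially identical to the paper's and is fine.

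The gap is in part (ii), precisely in the step you call ``essentially routine'': the diagonal/density argument upgrading almost-sure convergence against a countable family of test forms to almost-sure weak convergence of currents. That upgrade is not automatic: for an arbitrary test form $\widetilde\theta$ approximated by $\widetilde\theta_k$ from the countable family, the error $\big|\big\langle\frac1p[\sigma_p=0],\widetilde\theta-\widetilde\theta_k\big\rangle\big|$ is controlled only by the \emph{mass} of $\frac1p[\sigma_p=0]$ times the $\mathscr{C}^0$-distance of the test forms, so without a uniform mass bound the argument fails. The paper supplies exactly this bound before introducing $Y_p$: by the Poincar\'e--Lelong formula and Stokes' theorem,
$$\big\langle[\sigma_p=0],\widetilde\Omega^{n-1}\big\rangle=\big\langle c_1(L^p,\widetilde h^p),\widetilde\Omega^{n-1}\big\rangle\leq Cp\,,$$
and this is the one place where the additional hypotheses of Theorem \ref{T:SZ} --- $\mathcal X$ compact and $\widetilde\Omega$ \emph{closed} (K\"ahler) --- are actually used; your proposal never produces this bound and never invokes the K\"ahler assumption, so as written the passage from a fixed $\widetilde\theta$ to weak convergence does not go through. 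Once the mass bound is in place, the remaining machinery you propose (large-deviation/concentration estimates, growth of $\|\log|S_j|\|_{L^1}$ like $\log p$) is unnecessary: the dimension-free variance bound \eqref{e:SZ2} gives $\int_{{\mathcal S}^p}|Y_p|^2\,d\lambda_p\leq A\,C_{\widetilde\theta}\,p^{-2}$, hence $\sum_p\int_{{\mathcal S}^p}|Y_p|^2\,d\lambda_p<\infty$, so $Y_p\to0$ $\lambda_\infty$-a.e., which is \eqref{e:SZ3}, and the theorem follows by combining this with Theorems \ref{T:mt1}, \ref{T:mt2}, \ref{T:mt3} or Corollary \ref{C:mt1}.
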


\par Here $[S=0]$ denotes the current of integration (with multiplicities) over the zero set of a nontrivial section $S\in\mathcal V^p$. The arguments of Shiffman and Zelditch \cite{ShZ99} needed for the proof of Theorem \ref{T:SZ} are recalled in Section \ref{S:SZ}.

\medskip

\par {\em Acknowledgement.} Dan Coman is grateful to the Alexander von Humboldt Foundation for their support and to the Mathematics Institute at the University of K\"oln for their hospitality.

\section{Orbifolds and orbifold line bundles}\label{S:orb}

We recall here some necessary notions about (complex effective) orbifolds and (holomorphic) orbifold line bundles, following \cite{BG08} (see also \cite{ALR07,BGK05,GK07,MM07}).

\subsection{Orbifolds}\label{SS:orb}

\par Let $X$ be a (second countable) complex space of dimension $n$. An orbifold chart on $X$ is a triple $(\widetilde U, \Gamma, \phi)$ where $\widetilde U$ is a domain in ${\mathbb C}^n$, $\Gamma $ is a finite group acting effectively as automorphisms of $\widetilde U$, and $\phi:\widetilde U\longrightarrow U$ is an analytic cover (i.e. proper and finite holomorphic map) onto an open set $U\subset X$ such that $\phi\circ\gamma=\phi$ for every $\gamma\in\Gamma$ and the induced natural map $\widetilde U/\Gamma\longrightarrow U$ is a homeomorphism. An injection between two charts $(\widetilde U, \Gamma, \phi)$, $(\widetilde U', \Gamma', \phi')$ is a holomorphic embedding $\lambda:\widetilde U\longrightarrow\widetilde U'$ so that $\phi'\circ\lambda=\phi$. An orbifold atlas on $X$ is a family ${\mathcal U} = \{(\widetilde U_i, \Gamma_i, \phi_i)\}$ of orbifold charts such that $X = \bigcup U_i$, where $U_i:=\phi_i(\widetilde U_i)$, and, given two charts $(\widetilde U_i, \Gamma_i, \phi_i)$, $(\widetilde U_j, \Gamma_j, \phi_j)$ and $x\in U_i\cap U_j$, there exist a chart $(\widetilde U_k, \Gamma_k, \phi_k)$ with $x\in U_k$ and injections $\lambda_{ik}: (\widetilde U_k, \Gamma_k, \phi_k)\longrightarrow(\widetilde U_i, \Gamma_i, \phi_i)$, $\lambda_{jk}: (\widetilde U_k, \Gamma_k, \phi_k)\longrightarrow(\widetilde U_j, \Gamma_j, \phi_j)$. An atlas ${\mathcal U}$ is said to be a refinement of an atlas ${\mathcal V}$ if there exists an injection of every chart of ${\mathcal U}$ into some chart of ${\mathcal V}$. An orbifold ${\mathcal X}=(X,{\mathcal U})$ is a complex space $X$ with a (maximal) orbifold atlas ${\mathcal U}$. It follows that the underlying space $X$ is a reduced normal complex space with at most quotient singularities (see e.g. \cite[Sec. 4.4]{BG08}).

\par Given an injection $\lambda:\widetilde U\longrightarrow\widetilde U'$ and $\gamma\in\Gamma$, one  has that there exists a unique $\gamma'\in\Gamma'$ with $\gamma'\circ\lambda=\lambda\circ\gamma$ \cite[Lemma 4.1.2]{BG08}. Thus we get an injective group homomorphism, denoted still by $\lambda:\Gamma\longrightarrow\Gamma'$, defined by $\lambda(\gamma)=\gamma'$. Moreover, if $\gamma'\lambda(\widetilde U)\cap\lambda(\widetilde U)\neq\emptyset$ then $\gamma'\in\lambda(\Gamma)$, and so $\gamma'\lambda(\widetilde U)=\lambda(\widetilde U)$ \cite[p. 3]{ALR07}. This implies that the set $\gamma'\lambda(\widetilde U)$ depends only on the coset $[\gamma']=\gamma'\,\lambda(\Gamma)$ and
\begin{equation}\label{e:coset}
(\phi')^{-1}(U)=\bigcup_{[\gamma']}\gamma'\lambda(\widetilde U)\,,
\end{equation}
where the union is disjoint.

\par We write
$$X=X_{reg}\cup X_{sing},$$
where $X_{reg}$ (resp.\ $X_{sing}$) is the set of regular (resp.\ singular) points of $X$. Since $X$ is normal, we have that $X_{reg}$ is a connected complex manifold and $X_{sing}$ is a closed reduced complex subspace of $X$ with $\codim X_{sing}\geq2$. Given an orbifold chart $(\widetilde U, \Gamma, \phi)$, the isotropy group $\Gamma_x$ of $x\in U$ is defined as the isotropy (stabilizer) group $\Gamma_y$ of any $y\in\phi^{-1}(x)$, which is unique up to conjugacy. The sets of orbifold regular, resp. orbifold singular, points are defined by
$$X_{reg}^{orb}=\{x\in X:\,|\Gamma_x|=1\}\,,\;X_{sing}^{orb}=\{x\in X:\,|\Gamma_x|>1\}\,.$$
Then $X_{sing}^{orb}$ is a closed complex subspace of $X$ and one has that $X_{reg}^{orb}\subset X_{reg}$ and $X_{sing}\subset X_{sing}^{orb}$.

\par An orbifold ${\mathcal X}=(X,{\mathcal U})$ can be identified with the log pair $(X,\Delta)$, where $\Delta$ is called the branch divisor and is the ${\mathbb Q}$-divisor defined by
$$\Delta=\sum\left(1-m^{-1}\right)\,D\,.$$
Here the sum is taken over all Weil divisors $D\subset X_{sing}^{orb}$ and $m$ is the ramification index over $D$ (see \cite{BGK05,GK07}, \cite[Sec. 4.4]{BG08}).

\subsection{Orbifold line bundles}\label{SS:olb}

\par We now recall the notion of a (proper) orbifold line bundle on ${\mathcal X}=(X,{\mathcal U})$ (see \cite{BG08}). This is a collection $\{L_{\widetilde U_i}\}$ of $\Gamma_i$-equivariant holomorphic line bundles $\widetilde\pi_i:L_{\widetilde U_i}\longrightarrow\widetilde U_i$ which satisfy a gluing condition. Equivariance means that $\Gamma_i$ acts effectively on $L_{\widetilde U_i}$ as bundle maps which are isomorphisms along fibers and the following diagram is commutative,
\begin{equation*}
\begin{CD}
L_{\widetilde U_i}@>\widetilde\gamma>>L_{\widetilde U_i}\\
@VV \widetilde\pi_i V    @VV \widetilde\pi_i V \\
\widetilde U_i@>\gamma>>\widetilde U_i
\end{CD}\;\;\;,\
\end{equation*}
where $\widetilde\gamma$ is the bundle map corresponding to $\gamma\in\Gamma_i$. The gluing condition is as follows: any injection $\lambda_{ji}:\widetilde U_i\longrightarrow\widetilde U_j$ induces a bundle map $\widetilde\lambda_{ji}:L_{\widetilde U_j}\mid_{_{\lambda_{ji}(\widetilde U_i)}}\longrightarrow L_{\widetilde U_i}$ which is an isomorphism along fibers, so that if $\gamma\in\Gamma_i$ and $\gamma'=\lambda_{ji}(\gamma)\in\Gamma_j$ (i.e. $\gamma'\circ\lambda_{ji}=\lambda_{ji}\circ\gamma$) then $\widetilde\lambda_{ji}\circ\widetilde\gamma'=\widetilde\gamma\circ\widetilde\lambda_{ji}$. Moreover, the $\widetilde\lambda_{ji}$ are functorial: if $\lambda_{kj}:\widetilde U_j\longrightarrow\widetilde U_k$ is another injection then $\widetilde{\lambda_{kj}\circ\lambda_{ji}}=\widetilde\lambda_{ji}\circ\widetilde\lambda_{kj}$.

\par The total space $L$ of the orbifold line bundle $\{L_{\widetilde U_i}\}$ is constructed by gluing the sets $L_{\widetilde U_i}/\Gamma_i$ via the maps $\widetilde\lambda_{ji}$ in the usual way. Let $\widetilde\phi_i:L_{\widetilde U_i}\longrightarrow L_{\widetilde U_i}/\Gamma_i$ be the natural map. Then the atlas $\{(L_{\widetilde U_i},\Gamma_i,\widetilde\phi_i)\}$ gives $L$ the structure of an  (effective) orbifold, hence $L$ is a normal complex space. Since
$$\phi_i\circ\widetilde\pi_i\circ\widetilde\gamma=\phi_i\circ\gamma\circ\widetilde\pi_i=\phi_i\circ\widetilde\pi_i\,,\;\forall\,\gamma\in\Gamma_i\,,$$
there exists a continuous map $\pi_i$ which makes the following diagram commutative:
\begin{equation*}
\begin{CD}
L_{\widetilde U_i}@>\widetilde\phi_i>>L_{\widetilde U_i}/\Gamma_i\\
@VV \widetilde\pi_i V    @VV \pi_i V \\
\widetilde U_i@>\phi_i>>U_i
\end{CD}\;\;\;.\
\end{equation*}
Note that $\pi_i$ is surjective and is holomorphic on $\pi_i^{-1}(U_i\cap X_{reg}^{orb})$, hence it is holomorphic. The maps $\pi_i$ glue to a surjective holomorphic map $\pi:L\longrightarrow X$.

\par For brevity, we will denote the orbifold line bundle $\{L_{\widetilde U_i}\}$ on ${\mathcal X}=(X,{\mathcal U})$ by $(L,{\mathcal X})$. If $L\mid_{X_{reg}^{orb}}:=\pi^{-1}(X_{reg}^{orb})$, then $\pi:L\mid_{X_{reg}^{orb}}\longrightarrow X_{reg}^{orb}$ is a holomorphic line bundle. Suppose now that $x\in X$ has non-trivial isotropy group. If $y\in\phi^{-1}(x)$ for some orbifold chart $(\widetilde U,\Gamma,\phi)$ near $x$, then each $\gamma\in\Gamma_y$ induces an isomorphism $\widetilde\gamma:L_{\widetilde U}\mid_y\longrightarrow L_{\widetilde U}\mid_y$. Hence $L\mid_x:=\pi^{-1}(x)\cong(L_{\widetilde U}\mid_y)/\Gamma_y$, and $\pi:L\longrightarrow X$ is in general not a holomorphic line bundle in the usual sense. The latter are sometimes called absolute orbifold line bundles, see \cite{BG08}. However if $X$ is compact then there exists $m\geq1$ so that, for every orbifold line bundle $L$ on ${\mathcal X}$, $L^{\otimes m}$ is absolute.

\par A (holomorphic) section of $(L,{\mathcal X})$ is a collection of sections $\widetilde S_i:\widetilde U_i\longrightarrow L_{\widetilde U_i}$ for each orbifold chart $(\widetilde U_i, \Gamma_i, \phi_i)$ so that $\widetilde S_i$ is $\Gamma_i$-equivariant, i.e. $\widetilde\gamma\circ\widetilde S_i=\widetilde S_i\circ\gamma$ for all $\gamma\in\Gamma_i$, and for every injection $\lambda_{ji}:\widetilde U_i\longrightarrow\widetilde U_j$ one has $ \widetilde\lambda_{ji}\circ\widetilde S_j\circ\lambda_{ji}=\widetilde S_i$.

\par Since $\widetilde\phi_i\circ\widetilde S_i\circ\gamma=\widetilde\phi_i\circ\widetilde\gamma\circ\widetilde S_i=\widetilde\phi_i\circ\widetilde S_i$ for each $\gamma\in\Gamma_i$, there exists a continuous map $S_i$ which makes the following diagram commutative:
\begin{equation*}
\begin{CD}
L_{\widetilde U_i}@>\widetilde\phi_i>>L\mid_{U_i}\\
@AA \widetilde S_i A    @AA S_i A \\
\widetilde U_i@>\phi_i>>U_i
\end{CD}\;\;\;.\
\end{equation*}
Note that $S_i$ is in fact holomorphic on $U_i$, since it is holomorphic on $U_i\cap X_{reg}^{orb}$. The local sections $S_i$ glue to an injective holomorphic map $S:X\longrightarrow L$ which verifies $\pi\circ S=id_X$. Its restriction to $X_{reg}^{orb}$ is a section of the line bundle $L\mid_{X_{reg}^{orb}}$\,.

\par We denote by $H^0({\mathcal X},L)$ the vector space of holomorphic sections of $(L,{\mathcal X})$.

\subsection{Differential forms on orbifolds}\label{SS:dfo}

\par A $(p,q)$ form on an orbifold ${\mathcal X}=(X,{\mathcal U})$ is a collection $\widetilde\psi=\{\widetilde\psi_i\}$ of smooth $(p,q)$ forms $\widetilde\psi_i$ on $\widetilde U_i$, for each orbifold chart $(\widetilde U_i, \Gamma_i, \phi_i)$, so that $\gamma^\star\widetilde\psi_i=\widetilde\psi_i$ for each $\gamma\in\Gamma_i$, and $\lambda_{ji}^\star\widetilde\psi_j=\widetilde\psi_i$ for each injection $\lambda_{ji}:\widetilde U_i\longrightarrow\widetilde U_j$. Equivalently, a $(p,q)$ form on ${\mathcal X}$ is a smooth $(p,q)$ form $\psi$ on $X_{reg}^{orb}$ so that for each chart $(\widetilde U_i, \Gamma_i, \phi_i)$, $\phi_i^\star(\psi\mid_{U_i\cap X_{reg}^{orb}})$ extends to a smooth form on $\widetilde U_i$ \cite{BGK05}.

\par A $(p,p)$ form $\widetilde\psi=\{\widetilde\psi_i\}$ is positive (resp. closed) if each form $\widetilde\psi_i$ is positive (resp. closed). A $(1,1)$ form $\widetilde\Omega=\{\widetilde\Omega_i\}$ is K\"ahler, resp. Hermitian, if each form $\widetilde\Omega_i$ is K\"ahler, resp. Hermitian (i.e. positive).

\par One can define the orbifold tangent and cotangent bundles of ${\mathcal X}$ and view orbifold differential forms as sections of such. An important example that we will need is the orbifold canonical bundle $K_{\mathcal X}$, defined as the collection of canonical bundles $\{K_{\widetilde U_i}\}$, for all charts $(\widetilde U_i, \Gamma_i, \phi_i)\in{\mathcal U}$. Note that the equivariance and gluing are given by the pull-back operators, $\widetilde\gamma:=(\gamma^{-1})^\star$, $\widetilde\lambda_{ji}:=\lambda_{ji}^\star$.

\section{Currents and singular metrics}\label{S:csm}

\par We collect here a few facts about currents on orbifolds, being especially interested in positive closed currents of bidegree (1,1). We also recall the notion of singular metric on an orbifold line bundle and we introduce the Bergman kernel function and the Fubini-Study currents for subspaces of $L^2$-holomorphic sections. Throughout this section, we use the notations introduced in Section \ref{S:orb}.

\subsection{Currents on orbifolds}\label{SS:currents}

\par A current of bidegree $(p,q)$ on an orbifold ${\mathcal X}=(X,{\mathcal U})$ is a collection $\widetilde T=\{\widetilde T_i\}$ of currents $\widetilde T_i$ of bidegree $(p,q)$ on $\widetilde U_i$, for each orbifold chart $(\widetilde U_i, \Gamma_i, \phi_i)$, so that $\gamma_\star \widetilde T_i=\widetilde T_i$ for each $\gamma\in\Gamma_i$, and $(\lambda_{ji})_\star \widetilde T_i=\widetilde T_j\mid_{\lambda_{ji}(\widetilde U_i)}$ for each injection $\lambda_{ji}:\widetilde U_i\longrightarrow\widetilde U_j$. A current $\widetilde T$ of bidegree $(p,p)$ is positive (resp.\ closed) if each current $\widetilde T_i$ is positive (resp.\ closed). A sequence of currents $\widetilde T^k=\{\widetilde T^k_i\}$, $k\geq1$, converges weakly to a current $\widetilde T=\{\widetilde T_i\}$ if, for each $i$, the sequence of current $\widetilde T^k_i$ converges weakly on $\widetilde U_i$ to $\widetilde T_i$ as $k\to\infty$.

\par Let us define the action of an orbifold current $\widetilde T=\{\widetilde T_i\}$ of bidegree $(p,q)$ on a $(n-p,n-q)$ test form $\widetilde\theta=\{\widetilde\theta_i\}$ (where ${\rm supp}\,\widetilde\theta\Subset X$). Fix a partition of unity $\{\chi_l\}_{l\geq1}$ on $X$ so that $\chi_l$ has compact support contained in $U_{i_l}$ and set
$$\big\langle\,\widetilde T,\widetilde\theta\,\big\rangle=\sum_{l=1}^\infty\frac{1}{m_{i_l}}\,\langle\widetilde T_{i_l},(\chi_l\circ\phi_{i_l})\,\widetilde\theta_{i_l}\rangle\,,$$
where $m_i=|\Gamma_i|$. Note that the standard calculus with currents works as usual. For instance, one checks that the current $d\widetilde T:=\{d\widetilde T_i\}$ verifies $\langle d\widetilde T,\widetilde\theta\rangle=(-1)^{p+q+1}\langle\widetilde T,d\widetilde\theta\rangle$.

\par If $U_i\subset X_{reg}$ the current $(\phi_i)_\star \widetilde T_i$ is well defined on $U_i$, since $\phi_i$ is proper. We show that these currents glue to a global current on $X_{reg}$:

\begin{Proposition}\label{P:pqc} Let $\widetilde T=\{\widetilde T_i\}$ be a current of bidegree $(p,q)$ on an orbifold ${\mathcal X}=(X,{\mathcal U})$. There exists a current $T$ of bidegree $(p,q)$ on $X_{reg}$ so that for every chart $(\widetilde U_i, \Gamma_i, \phi_i)$ with $U_i\subset X_{reg}$ we have $T\mid_{U_i}=\frac{1}{m_i}\,(\phi_i)_\star \widetilde T_i\,$, where $m_i=|\Gamma_i|$.
\end{Proposition}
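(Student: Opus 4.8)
The plan is to define $T$ on each chart $U_i\subset X_{reg}$ by the formula $T\mid_{U_i}=\frac{1}{m_i}\,(\phi_i)_\star\widetilde T_i$ and then to check that these local definitions agree on overlaps, so that they patch together to a single current of bidegree $(p,q)$ on $X_{reg}$. The pushforward $(\phi_i)_\star\widetilde T_i$ is well defined on $U_i$ by properness of $\phi_i$, as already noted, so the entire content of the statement is the compatibility of the local currents. First I would reduce this compatibility to a statement about injections. Given $x\in U_i\cap U_j$, which lies in $X_{reg}$, the atlas axiom furnishes a chart $(\widetilde U_k,\Gamma_k,\phi_k)$ with $x\in U_k$ and injections $\lambda_{ik}\colon\widetilde U_k\to\widetilde U_i$, $\lambda_{jk}\colon\widetilde U_k\to\widetilde U_j$; replacing $U_k$ by a smaller orbifold chart around $x$ I may assume $U_k\subset X_{reg}$. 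It then suffices to prove that for a single injection $\lambda:=\lambda_{ik}$ one has
\[
\frac{1}{m_i}\,(\phi_i)_\star\widetilde T_i=\frac{1}{m_k}\,(\phi_k)_\star\widetilde T_k\quad\text{on }U_k,
\]
for then both $\frac{1}{m_i}(\phi_i)_\star\widetilde T_i$ and $\frac{1}{m_j}(\phi_j)_\star\widetilde T_j$ coincide with $\frac{1}{m_k}(\phi_k)_\star\widetilde T_k$ near $x$, hence with each other.

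The heart of the argument is this identity, which I would establish by testing against a form $\theta$ on $U_k$ and decomposing the fiber via \eqref{e:coset}. The preimage $\phi_i^{-1}(U_k)$ is the \emph{disjoint} union $\bigcup_{[\gamma']}\gamma'\lambda(\widetilde U_k)$ over the cosets $[\gamma']\in\Gamma_i/\lambda(\Gamma_k)$, whose number is $m_i/m_k$ because $\lambda\colon\Gamma_k\to\Gamma_i$ is injective. Since $\phi_i^\star\theta$ is supported in this preimage, $\langle\widetilde T_i,\phi_i^\star\theta\rangle$ splits as a sum over the cosets. On a piece $\gamma'\lambda(\widetilde U_k)$ the $\Gamma_i$-invariance $(\gamma')_\star\widetilde T_i=\widetilde T_i$ gives $\widetilde T_i\mid_{\gamma'\lambda(\widetilde U_k)}=(\gamma')_\star\big(\widetilde T_i\mid_{\lambda(\widetilde U_k)}\big)$, while $\phi_i\circ\gamma'=\phi_i$ gives $(\gamma')^\star\phi_i^\star\theta=\phi_i^\star\theta$, so each of the $m_i/m_k$ pieces contributes the same amount as $\lambda(\widetilde U_k)$; that is,
\[
\langle\widetilde T_i,\phi_i^\star\theta\rangle=\frac{m_i}{m_k}\,\big\langle\widetilde T_i\mid_{\lambda(\widetilde U_k)},\phi_i^\star\theta\big\rangle.
\]
Finally I would invoke the compatibility $\lambda_\star\widetilde T_k=\widetilde T_i\mid_{\lambda(\widetilde U_k)}$ and the relation $\phi_i\circ\lambda=\phi_k$ to compute $\big\langle\lambda_\star\widetilde T_k,\phi_i^\star\theta\big\rangle=\big\langle\widetilde T_k,(\phi_i\circ\lambda)^\star\theta\big\rangle=\big\langle\widetilde T_k,\phi_k^\star\theta\big\rangle=\big\langle(\phi_k)_\star\widetilde T_k,\theta\big\rangle$, which after dividing by $m_i$ yields exactly the displayed identity.

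The step I expect to be the main obstacle is the coset-counting in the middle display: one must restrict $\widetilde T_i$ to the disjoint pieces without double counting and verify that $\gamma'$ genuinely identifies each piece's contribution with that of $\lambda(\widetilde U_k)$, using simultaneously the invariance of $\widetilde T_i$ and the invariance of the pulled-back test form under $\gamma'$. Everything downstream is bookkeeping; once the identity on injections is in hand, the consistent local currents $\frac{1}{m_i}(\phi_i)_\star\widetilde T_i$ glue to a well-defined global current $T$ on $X_{reg}$, and $T$ has bidegree $(p,q)$ since the finite holomorphic maps $\phi_i$ between equidimensional manifolds preserve bidegree under pushforward.
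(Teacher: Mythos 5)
Your proposal is correct and follows essentially the same route as the paper's proof: test against a form, decompose the preimage of the small chart into disjoint cosets via \eqref{e:coset}, use the $\Gamma$-invariance of the current together with $\phi_i\circ\gamma'=\phi_i$ to see that all $m_i/m_k$ cosets contribute equally, and then convert the contribution of $\lambda(\widetilde U_k)$ via the gluing relation $\lambda_\star\widetilde T_k=\widetilde T_i\mid_{\lambda(\widetilde U_k)}$ and $\phi_i\circ\lambda=\phi_k$. The only cosmetic differences are that you make the reduction from arbitrary overlaps to a single injection explicit (the paper leaves it implicit), whereas the paper packages the final step through the restricted analytic cover $\widehat\phi_j=\phi_j\mid_V$ of degree $m_i$.
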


\begin{proof} Let $T_i:=\frac{1}{m_i}\,(\phi_i)_\star \widetilde T_i$. If $\lambda_{ji}:\widetilde U_i\longrightarrow\widetilde U_j$ is an injection with $U_i\subset U_j\subset X_{reg}$ we have to show that $T_j\mid_{U_i}=T_i$. If $V:=\lambda_{ji}(\widetilde U_i)$ we have by \eqref{e:coset} that
$$\phi_j^{-1}(U_i)=\bigcup_{[\gamma']}\gamma'\,V\,,$$
where the union is disjoint and $[\gamma']=\gamma'\lambda_{ji}(\Gamma_i)$ denotes the coset of $\gamma'$ in $\Gamma_j/\lambda_{ji}(\Gamma_i)$. This implies that the map $\widehat\phi_j:=\phi_j\mid_V:V\longrightarrow U_i$ is an analytic cover with topological degree $m_i=|\lambda_{ji}(\Gamma_i)|$ and
$$T_i=\frac{1}{m_i}\,(\phi_i)_\star \widetilde T_i=\frac{1}{m_i}\,(\widehat \phi_j)_\star\circ(\lambda_{ji})_\star \widetilde T_i=\frac{1}{m_i}\,(\widehat\phi_j)_\star(\widetilde T_j\mid_V)\,.$$
If $\theta$ is an $(n-p,n-q)$ test form supported in $U_i$ then
\begin{eqnarray*}
\big\langle\, T_j\mid_{U_i},\theta\big\rangle&=&\frac{1}{m_j}\,\big\langle\, \widetilde T_j,\phi_j^\star\theta\big\rangle=\frac{1}{m_j}\,\sum_{[\gamma']}\big\langle\, \widetilde T_j\mid_{\gamma'\,V},(\phi_j^\star\theta)\mid_{\gamma'\,V}\big\rangle\\&=&\frac{1}{m_j}\,\sum_{[\gamma']}\big\langle\,\gamma'_\star(\widetilde T_j\mid_V),(\phi_j^\star\theta)\mid_{\gamma'\,V}\rangle=\frac{1}{m_j}\,\sum_{[\gamma']}\big\langle\, \widetilde T_j\mid_V,\widehat\phi_j^\star\theta\rangle=\big\langle\, T_i,\theta\big\rangle\,.
\end{eqnarray*}
\end{proof}

\par Let $\widetilde{\mathcal T}$, resp.\ $\mathcal T$, denote the set of positive closed currents of bidegree $(1,1)$ on ${\mathcal X}$, resp.\ on $X_{reg}$\,. Proposition \ref{P:pqc} provides a map $\boldsymbol{F}:\widetilde{\mathcal T}\longrightarrow\mathcal T$, $\boldsymbol{F}({\widetilde T})=T$, which is continuous with respect to weak$^\star$ convergence of currents. We will prove that $\boldsymbol{F}$ is in fact bijective and has continuous inverse. For this, we show first the following:

\begin{Proposition}\label{P:11c} Let ${\mathcal X}=(X,{\mathcal U})$ be an orbifold. Then each point $x\in X$ has a neighborhood $U\subset X$ such that for every current $T\in{\mathcal T}$ there is a psh function $v$ on $U$ with $dd^cv=T$ on $U\cap X_{reg}$\,.
\end{Proposition}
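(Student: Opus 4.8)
The plan is to reduce the problem to a standard local statement about positive closed $(1,1)$-currents on complex manifolds, using the orbifold chart structure to pull everything up to a smooth domain $\widetilde U \subset \mathbb{C}^n$. Given a point $x \in X$, I first choose an orbifold chart $(\widetilde U, \Gamma, \phi)$ with $x \in U = \phi(\widetilde U)$, so that $\phi:\widetilde U \longrightarrow U$ is an analytic cover and $\widetilde U$ is a genuine domain in $\mathbb{C}^n$. Given $T \in \mathcal T$, a positive closed current of bidegree $(1,1)$ on $X_{reg}$, I would pull it back to the manifold $\widetilde U$: since $X_{sing}$ has codimension at least $2$ and $\phi^{-1}(X_{sing}) \subset \widetilde U$ is likewise a proper analytic subset, the current $T$ restricted to $U \cap X_{reg}$ pulls back under $\phi$ to a positive closed current $\widetilde T$ on $\phi^{-1}(U\cap X_{reg}) = \widetilde U \setminus \phi^{-1}(X_{sing})$, which extends by its locally finite mass across the analytic set $\phi^{-1}(X_{sing})$ of codimension $\geq 2$ to a positive closed current on all of $\widetilde U$ (this is the Skoda--El Mir type extension / trivial extension across small sets).

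Once I have a positive closed $(1,1)$-current $\widetilde T$ on the domain $\widetilde U \subset \mathbb{C}^n$, I invoke the local $dd^c$-lemma for such currents: after possibly shrinking $\widetilde U$ to a ball, there exists a plurisubharmonic function $\widetilde v$ on $\widetilde U$ with $dd^c \widetilde v = \widetilde T$ on $\widetilde U$. The next step is to make $\widetilde v$ $\Gamma$-invariant, so that it descends to $U$. The natural device is to average over the group, replacing $\widetilde v$ by
\[
\widetilde v_{\mathrm{inv}} = \frac{1}{m}\sum_{\gamma\in\Gamma}\gamma^\star\widetilde v\,,\qquad m = |\Gamma|\,,
\]
which is still psh and satisfies $dd^c\widetilde v_{\mathrm{inv}} = \frac1m\sum_\gamma \gamma^\star \widetilde T$. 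Here I would use that $\widetilde T$ is itself $\Gamma$-invariant: this holds because $\widetilde T$ was built as the trivial extension of $\phi^\star(T|_{U\cap X_{reg}^{orb}})$, and $\phi\circ\gamma = \phi$ forces $\gamma^\star\widetilde T = \widetilde T$ on the complement of the small analytic set, hence everywhere by uniqueness of the extension. Therefore $dd^c\widetilde v_{\mathrm{inv}} = \widetilde T$, and $\widetilde v_{\mathrm{inv}}$ is $\Gamma$-invariant, so it factors as $\widetilde v_{\mathrm{inv}} = v\circ\phi$ for a continuous function $v$ on $U$ that is psh on $U\cap X_{reg}$.

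Finally I would check that the descended function $v$ genuinely satisfies $dd^c v = T$ on $U \cap X_{reg}$. On the orbifold regular locus $U\cap X_{reg}^{orb}$ this is immediate because $\phi$ is a biholomorphism there and pull-back commutes with $dd^c$. To extend the identity to all of $U \cap X_{reg}$ (which may meet $X_{sing}^{orb}\setminus X_{sing}$, i.e.\ regular points of $X$ with nontrivial isotropy), I would again use that the discrepancy $dd^c v - T$ is a current on $U\cap X_{reg}$ vanishing off a proper analytic subset and, being the difference of the $dd^c$ of a locally bounded psh function and a positive closed current of locally finite mass, has no mass on that thin set, hence vanishes identically. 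The main obstacle I anticipate is the careful bookkeeping at points of $X_{reg}$ with nontrivial stabilizer: one must verify that the normality of $X$ and the codimension bound on $X_{sing}$ suffice to extend and descend the potential across exactly the right exceptional sets, and that $\phi^\star$ and the trivial extension interact correctly with $dd^c$ there. This is precisely where the interplay between the analytic structure of the normal space $X$ and the smooth orbifold cover $\widetilde U$ must be handled with care rather than by a routine appeal to the classical Euclidean lemma.
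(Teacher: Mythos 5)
Your construction up to the descent (pull back $T$ to the chart, extend across the codimension-$\geq2$ set $\phi^{-1}(X_{sing})$, take a local potential, average over $\Gamma$, descend to $v$) is essentially the paper's argument. The genuine gap is in your last step: the passage from $dd^cv=T$ on $U\cap X_{reg}^{orb}$ to all of $U\cap X_{reg}$. The exceptional set there, $U\cap X_{reg}\cap X_{sing}^{orb}$, is (part of) the branch divisor and typically has \emph{codimension one}, and your mass argument for killing the discrepancy $dd^cv-T$ fails on two counts. First, $v$ need not be locally bounded: in the paper's own example $\phi(z_1,z_2)=(z_1^2,z_2)$, taking $T=[x_1=0]\in{\mathcal T}$ gives $R=\phi^\star T=2[z_1=0]$ and $v=\log|x_1|$. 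Second, ``positive closed current of locally finite mass'' does not imply ``no mass on a thin analytic set'': locally finite mass is automatic for positive currents, and $T=[Z]$, with $Z$ a component of the branch divisor, is a legitimate element of ${\mathcal T}$ whose entire mass sits on $Z$; in the example above $dd^cv=[x_1=0]$ likewise carries all of its mass on the thin set. Thus $dd^cv-T$ is a closed current of order zero supported on a hypersurface, which by the support theorem can a priori equal $\sum_j c_j[Z_j]$ with $c_j\neq0$, and your argument provides no way to see that the coefficients vanish.

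The paper closes this step by a different mechanism, which is what you need: it first proves the push--pull identity $(\phi)_\star\phi^\star S=mS$ (with $m=|\Gamma|$) for positive closed $(1,1)$-currents $S$ on the manifold $U\cap X_{reg}$, checked via local potentials; then, since $\phi^\star T=R=dd^c(v\circ\phi)=\phi^\star(dd^cv)$ on $\phi^{-1}(U\cap X_{reg})$, pushing forward gives $mT=m\,dd^cv$, i.e.\ $T=dd^cv$ on all of $U\cap X_{reg}$ at once, with no extension-across-thin-sets argument needed. Two secondary points. The proposition asks for $v$ plurisubharmonic on $U$ as a complex space (which may meet $X_{sing}$), not merely on $U\cap X_{reg}$; the paper obtains this from upper semicontinuity of the descended function, psh-ness off $\Sigma=U\cap X_{sing}$, and Demailly's extension theorem on normal complex spaces --- your claim that $v$ is \emph{continuous} is both unjustified (usc is what descends) and insufficient for this. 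Also, $U$ must serve all $T\in{\mathcal T}$ simultaneously, so the shrinking has to be organized $T$-independently as in the paper (a set $V\supset\phi^{-1}(x)$ with simply connected components, then $U$ with $\phi^{-1}(U)\subset V$ by properness); your ``shrink $\widetilde U$ to a ball'' is repairable in this respect, but only if the ball is fixed in advance and contains the whole fiber $\phi^{-1}(x)$, so that the averaged potential lives on a $\Gamma$-invariant neighborhood of that fiber.
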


\begin{proof} 
Recall that a function $v:U\to[-\infty,\infty)$ on a complex space $U$ is called plurisubharmonic (abbreviated psh),
if $v$ is not identically $-\infty$ in any open set of $U$ and for every $x\in U$ there is a neighbourhood $V$ of $x$, and an analytic isomorphism $\iota$
of $V$ onto an analytic set in a polydisc $P$ in some $\mathbb{C}^N$ such that $v =\widetilde{v}\circ\iota$ in $V$ for some psh function $\widetilde{v}$ on $P$. The space of psh functions on $U$ will be denoted by $PSH(U)$. If $\iota$, $P$ and $\widetilde{v}$ can be so chosen that $\widetilde{v}$ is strictly plurisubharmonic in $P$ then we call $v$ strictly plurisubharmonic in $U$.

The conclusion of the Proposition is clear if $x\in X_{reg}$, so we assume $x\in X_{sing}$\,. Fix an orbifold chart $(\widetilde U_i, \Gamma_i, \phi_i)$ with $x\in U_i=\phi_i(\widetilde U_i)$ and set $m_i=|\Gamma_i|$, $\Sigma=U_i\cap X_{sing}$\,. Let $V\subset \widetilde U_i$ be an open set so that all its connected components are simply connected and $\phi_i^{-1}(x)\subset V$. Since $\phi_i$ is proper and $X$ is locally irreducible, there exists a neighborhood $U\subset U_i$ of $x$ so that $U\setminus\Sigma$ is connected and $\phi_i^{-1}(U)\subset V$.

\par Let $T\in{\mathcal T}$. Since $\codim\Sigma\geq2$ and $\phi_i$ is finite we have $\codim\phi_i^{-1}(\Sigma)\geq2$. Hence the positive closed $(1,1)$ current $R=\phi_i^\star(T\mid_{U_i\setminus\Sigma})$ on $\widetilde U_i\setminus\phi_i^{-1}(\Sigma)$ extends to a positive closed current on $\widetilde U_i$. Moreover $\gamma^\star R=R$ on $\widetilde U_i$. Indeed, since $\phi_i\circ\gamma=\phi_i$ this holds on the set $\widetilde U_i\setminus\phi_i^{-1}(\Sigma)$ which is $\Gamma_i$-invariant, hence it holds on $\widetilde U_i$. By the assumption on $V$, we have $R=dd^cu'$ for some psh function $u'$ on $V$. As $\phi_i^{-1}(U)$ is $\Gamma_i$-invariant we can define
$$u=\frac{1}{m_i}\,\sum_{\gamma\in\Gamma_i}u'\circ\gamma\in PSH(\phi_i^{-1}(U))\,,\;\;{\rm so}\;\;dd^cu=\frac{1}{m_i}\,\sum_{\gamma\in\Gamma_i}\gamma^\star R=R\,.$$

\par Note that $u\circ\gamma=u$ for all $\gamma\in\Gamma_i$. Thus $u=v\circ\phi_i$ for some upper semicontinuous function $v$ on $U$. As $\phi_i:\phi_i^{-1}(U\setminus\Sigma)\longrightarrow U\setminus\Sigma$ is proper and $v(y)=u(z)$ for any $z\in\phi_i^{-1}(y)$, it follows that $v$ is psh on $U\setminus\Sigma$. Since $v$ is upper semicontinuous, we have $v\in PSH(U)$ \cite[Theorem 1.7]{D85} .

\par If $S$ is a positive closed (1,1) current on $U\setminus\Sigma$ then $(\phi_i)_\star\circ\phi_i^\star S=m_iS$. Indeed, if $\theta$ is a test form supported in $U\setminus\Sigma$ we may assume that $S=dd^c\rho$ for a psh function $\rho$ near the support of $\theta$, and
\begin{equation}\label{e:pushpull}
\big\langle\,(\phi_i)_\star\circ\phi_i^\star S,\theta\,\big\rangle=\big\langle\, dd^c(\rho\circ\phi_i),\phi_i^\star\theta\,\big\rangle=m_i\big\langle\, dd^c\rho,\theta\,\big\rangle=m_i\big\langle\, S,\theta\,\big\rangle\,.
\end{equation}
Since $\phi_i^\star T=R=dd^c(v\circ\phi_i)=\phi_i^\star(dd^cv)$ on $\phi_i^{-1}(U\setminus\Sigma)$, we deduce $T=dd^cv$ on $U\setminus\Sigma$\,.
\end{proof}

\begin{Proposition}\label{P:G} The map $\boldsymbol{G}:{\mathcal T}\longrightarrow\widetilde{\mathcal T}$, $\boldsymbol{G}(T)=\{\phi_i^\star(T\mid_{U_i})\}$, where $(\widetilde U_i, \Gamma_i, \phi_i)$ are the orbifold charts of $\mathcal X$, is well defined, continuous with respect to weak$^\star$ convergence, and it is the inverse of $\boldsymbol{F}$.
\end{Proposition}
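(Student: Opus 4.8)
The plan is to verify in turn the three assertions: that $\boldsymbol{G}$ takes values in $\widetilde{\mathcal T}$, that $\boldsymbol{G}$ and $\boldsymbol{F}$ are mutually inverse, and that $\boldsymbol{G}$ is weak$^\star$ continuous. Throughout, the guiding principle is that everything can be read off from \emph{potentials}: by Proposition \ref{P:11c}, every $T\in\mathcal T$ admits, near an arbitrary point of $X$, a psh potential $v$ on a \emph{full} neighborhood $U$ (not merely on $U\setminus X_{sing}$) with $dd^cv=T$ on $U\cap X_{reg}$, and the pull-back $\phi_i^\star(T\mid_{U_i})$ is locally nothing but $dd^c(v\circ\phi_i)$.

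First I would check that $\boldsymbol{G}$ is well defined. Fix a chart $(\widetilde U_i,\Gamma_i,\phi_i)$ and set $A_i=\phi_i^{-1}(X_{sing})$, which has $\codim A_i\geq2$ since $\phi_i$ is finite and $\codim X_{sing}\geq2$. On $\widetilde U_i\setminus A_i=\phi_i^{-1}(U_i\cap X_{reg})$ the current $\phi_i^\star(T\mid_{U_i})$ equals $dd^c(v\circ\phi_i)$ for a local potential $v$ of $T$, and $v\circ\phi_i$ is psh; hence it is positive closed and, being the $dd^c$ of a psh function defined on all of $\phi_i^{-1}(U)$, it extends to a positive closed current $\widetilde T_i$ on $\widetilde U_i$. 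The identities $\phi_i\circ\gamma=\phi_i$ and $\phi_j\circ\lambda_{ji}=\phi_i$ give $v\circ\phi_i\circ\gamma=v\circ\phi_i$ and $v\circ\phi_j\circ\lambda_{ji}=v\circ\phi_i$, so the potentials, and therefore the currents, satisfy $\gamma_\star\widetilde T_i=\widetilde T_i$ and $(\lambda_{ji})_\star\widetilde T_i=\widetilde T_j\mid_{\lambda_{ji}(\widetilde U_i)}$. Thus $\boldsymbol{G}(T)=\{\widetilde T_i\}\in\widetilde{\mathcal T}$.

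Next I would prove $\boldsymbol{F}\circ\boldsymbol{G}=\mathrm{id}$ and $\boldsymbol{G}\circ\boldsymbol{F}=\mathrm{id}$. For a chart with $U_i\subset X_{reg}$ one has $A_i=\emptyset$, so $\boldsymbol{G}(T)_i=\phi_i^\star(T\mid_{U_i})$, and \eqref{e:pushpull} yields $\frac1{m_i}(\phi_i)_\star\phi_i^\star(T\mid_{U_i})=T\mid_{U_i}$; since such charts cover $X_{reg}$ this gives $\boldsymbol{F}(\boldsymbol{G}(T))=T$. For the other composition, given $\widetilde T\in\widetilde{\mathcal T}$ write $\widetilde T_i=dd^cu$ with $u$ psh and, after replacing $u$ by $\frac1{m_i}\sum_{\gamma\in\Gamma_i}u\circ\gamma$, $\Gamma_i$-invariant; then $u=v\circ\phi_i$ for a psh $v$ on $U_i$. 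The push-forward commutes with $dd^c$ and $(\phi_i)_\star u=m_iv$ by the change of variables for the degree-$m_i$ map $\phi_i$, so $\boldsymbol{F}(\widetilde T)\mid_{U_i}=\frac1{m_i}(\phi_i)_\star dd^cu=dd^cv$; pulling back recovers $\phi_i^\star(dd^cv)=dd^c(v\circ\phi_i)=dd^cu=\widetilde T_i$ on $\widetilde U_i\setminus A_i$, whence $\boldsymbol{G}(\boldsymbol{F}(\widetilde T))_i=\widetilde T_i$ once one notes that two positive closed $(1,1)$ currents agreeing off the set $A_i$ of $\codim\geq2$ must coincide, since such currents put no mass on $A_i$.

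Finally, continuity---which I expect to be the main obstacle---I would again handle through potentials, precisely because Proposition \ref{P:11c} furnishes potentials on full neighborhoods and thereby prevents any loss of mass onto $A_i$. If $T^k\to T$ weakly$^\star$ in $\mathcal T$, fix a point of $X$ and potentials $v^k,v$ psh on a neighborhood $U$ with $dd^cv^k=T^k$, $dd^cv=T$ on $U\cap X_{reg}$. Weak convergence forces $v^k\to v$ in $L^1_{loc}(U)$ after correcting each $v^k$ by a pluriharmonic function $h^k$ on $U$; since $h^k\circ\phi_i$ is again pluriharmonic, this correction leaves $dd^c(v^k\circ\phi_i)$ unchanged, so we may assume $v^k\to v$ in $L^1_{loc}(U)$. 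As $\phi_i$ is finite with locally bounded Jacobian, $L^1_{loc}$ convergence is preserved under pull-back, giving $v^k\circ\phi_i\to v\circ\phi_i$ in $L^1_{loc}(\phi_i^{-1}(U))$ and hence $\widetilde T_i^k=dd^c(v^k\circ\phi_i)\to dd^c(v\circ\phi_i)=\widetilde T_i$ weakly on $\phi_i^{-1}(U)$; covering $\widetilde U_i$ by such sets shows $\boldsymbol{G}(T^k)\to\boldsymbol{G}(T)$. The delicate points throughout are the normalization of the downstairs potentials and the observation that working with potentials defined \emph{across} $X_{sing}$, rather than with the pulled-back currents directly, is exactly what keeps mass from escaping onto the exceptional set of codimension $\geq2$.
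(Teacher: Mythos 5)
Your treatment of well-definedness and of the two identities $\boldsymbol{F}\circ\boldsymbol{G}=\mathrm{id}$, $\boldsymbol{G}\circ\boldsymbol{F}=\mathrm{id}$ follows essentially the paper's own route: Proposition \ref{P:11c} for potentials across $X_{sing}$, the push--pull formula \eqref{e:pushpull}, and descent of $\Gamma_i$-averaged potentials. (Two minor points: a global potential $u$ with $\widetilde T_i=dd^cu$ on all of $\widetilde U_i$ need not exist, so this step must be localized on small $\Gamma_i$-invariant neighborhoods exactly as in Proposition \ref{P:11c}; and your uniqueness principle---two positive closed $(1,1)$ currents agreeing off an analytic set of codimension $\geq2$ coincide---is a correct fact, also implicitly used by the paper.)

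The genuine gap is in the continuity argument, which is the crux of the proposition. You assert that weak convergence $T^k\to T$ ``forces $v^k\to v$ in $L^1_{loc}(U)$ after correcting each $v^k$ by a pluriharmonic function $h^k$ on $U$.'' This is precisely the hard point and it is not a citable standard fact here, for two reasons. First, the standard statement (weak convergence of positive closed $(1,1)$ currents yields $L^1_{loc}$-convergent potentials after pluriharmonic corrections) is a local statement on balls in a \emph{manifold}; since $T^k\to T$ is only tested against forms compactly supported in $X_{reg}$, it produces corrections only on balls inside $U\cap X_{reg}$, and gluing them into a single pluriharmonic $h^k$ defined across $X_{sing}$ requires an argument you do not give. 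Second, and more seriously, even granting such corrections on $U\cap X_{reg}$, passing from convergence in $L^1_{loc}(U\cap X_{reg})$ to convergence in $L^1_{loc}(U)$ requires ruling out that the corrected potentials blow up---equivalently, that the mass of $T^k$ concentrates---near $X_{sing}$, about which the weak convergence on $X_{reg}$ says nothing. This mass non-concentration near a codimension $\geq 2$ analytic set is exactly what the paper extracts from Oka's inequality for currents \cite{FS95}: it yields locally bounded masses of $\phi_i^\star T^j$ on all of $\widetilde U_i$, after which limit points are identified off $\phi_i^{-1}(\Sigma)$ and their difference from $\phi_i^\star T$ is killed by the support theorem. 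Your closing remark that working with potentials across $X_{sing}$ ``is exactly what keeps mass from escaping'' names the right phenomenon but is not a proof: having, for each fixed $k$, some potential $v^k\in PSH(U)$ gives no bound uniform in $k$ near $X_{sing}$. Your route could in fact be completed without Oka's inequality by supplying the missing step: prove that the corrected potentials are locally uniformly bounded above near $U\cap X_{sing}$ (for instance by the maximum principle on complex lines, exactly as the paper does near $\widetilde A_i$ in the proof of Theorem \ref{T:mt1}(i)) and then apply H\"ormander's compactness theorem \cite[Theorem 3.2.12]{Ho}. As written, however, the main difficulty of the continuity statement is assumed rather than proved.
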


\begin{proof} We can define $\widetilde T_i:=\phi_i^\star(T\mid_{U_i})$ as the positive closed (1,1) current on $\widetilde U_i$ with local potentials $v\circ\phi_i$, where $v$ are the local potentials of $T$ near each point of $U_i$ provided by Proposition \ref{P:11c}. Clearly, $\gamma^\star\widetilde T_i=\widetilde T_i$ for all $\gamma\in\Gamma_i$. If $\lambda_{ji}:\widetilde U_i\longrightarrow\widetilde U_j$ is an injection then, with the notation $\widehat\phi_j$ from the proof of Proposition \ref{P:pqc},
$$\lambda_{ji}^\star(\widetilde T_j\mid_{\lambda_{ji}(\widetilde U_i)})=\lambda_{ji}^\star((\phi_j^\star T)\mid_{\lambda_{ji}(\widetilde U_i)})=\lambda_{ji}^\star(\widehat\phi_j^\star(T\mid_{U_i}))=\phi_i^\star(T\mid_{U_i})=\widetilde T_i\,.$$
Hence $\boldsymbol{G}(T)\in\widetilde{\mathcal T}$. By \eqref{e:pushpull}, $(\phi_i)_\star\widetilde T_i=m_iT\mid_{U_i}$, where $m_i=|\Gamma_i|$, so $\boldsymbol{F}\circ \boldsymbol{G}(T)=T$.

\par We note that $\boldsymbol{G}$ is surjective. Indeed, if $\widetilde T=\{\widetilde T_i\}\in\widetilde{\mathcal T}$ and $x\in U_i$, we can repeat the argument in the proof of Proposition \ref{P:11c} to show that there exists a small neighborhood $U\subset U_i$ of $x$ and $v\in PSH(U)$ so that $\widetilde T_i=dd^c(v\circ\phi_i)$ on $\phi^{-1}(U)$ and $\frac{1}{m_i}\,(\phi_i)_\star\widetilde T_i=dd^cv$ on $U\cap X_{reg}$. Setting $T:=\boldsymbol{F}(\widetilde T)$ we have for $U_i\subset X_{reg}$, $T\mid_U=\frac{1}{m_i}\,(\phi_i)_\star\widetilde T_i\mid_U=dd^cv$, so $\widetilde T=\boldsymbol{G}(T)$.

\par To prove the continuity of $\boldsymbol{G}$, assume that $T^j,\,T\in{\mathcal T}$ and the sequence $T^j$ converges weakly to $T$. Fix an orbifold chart $(\widetilde U_i, \Gamma_i, \phi_i)$ and let $\Sigma=U_i\cap X_{sing}$. Then $\phi_i^\star T^j$ converges weakly to $\phi_i^\star T$ on $\widetilde U_i\setminus\phi_i^{-1}(\Sigma)$. Since $\codim\phi_i^{-1}(\Sigma)\geq2$, Oka's inequality for currents \cite{FS95} implies that the sequence of currents $\{\phi_i^\star T^j\}_j$ has locally bounded mass in $\widetilde U_i$. As any limit point equals $\phi_i^\star T$ on $\widetilde U_i\setminus\phi_i^{-1}(\Sigma)$, we conclude that the sequence $\phi_i^\star T^j$ converges weakly to $\phi_i^\star T$ on $\widetilde U_i$.
\end{proof}

\begin{Definition}\label{D:Kc} A K\"ahler current on $X$ is a positive closed $(1,1)$ current $T$ on $X_{reg}$ with the property that for every $x\in X$ there exist a neighborhood $U$ of $x$ and a strictly psh function $v$ on $U$ so that $T=dd^cv$ on $U\cap X_{reg}$.
\end{Definition}

\par We note that if the local potentials of a K\"ahler current $T$ are $C^\infty$-smooth then $X$ is called a K\"ahler space, cf. \cite{O87} (see also \cite[Sec. 5.2]{EGZ09}). Propositions \ref{P:11c} and \ref{P:G} yield the following:

\begin{Proposition}\label{P:Kc} If ${\mathcal X}=(X,{\mathcal U})$ is a K\"ahler orbifold then $X$ carries a K\"ahler current whose local potentials are continuous near each $x\in X$ and smooth near each $x\in X_{reg}^{orb}$\,.
\end{Proposition}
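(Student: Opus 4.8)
The plan is to realize the desired K\"ahler current as $T=\boldsymbol F(\widetilde T)$, where $\widetilde T=\{\widetilde\Omega_i\}\in\widetilde{\mathcal T}$ is the positive closed orbifold current defined by the given K\"ahler form and $\boldsymbol F$ is the map of Proposition \ref{P:pqc}; the whole task then reduces to producing local potentials of $T$ on neighborhoods in $X$ that are continuous, strictly psh, and smooth on $X_{reg}^{orb}$. Fix $x\in X$ and an orbifold chart $(\widetilde U_i,\Gamma_i,\phi_i)$ with $x\in U_i$, $m_i=|\Gamma_i|$. Since $\widetilde\Omega_i$ is a smooth K\"ahler form, after shrinking to a neighborhood $U\subset U_i$ of $x$ with $\phi_i^{-1}(U)$ nice (simply connected components, as in the proof of Proposition \ref{P:11c}), the local $dd^c$-lemma gives a smooth strictly psh $u_0$ with $\widetilde\Omega_i=dd^cu_0$ on $\phi_i^{-1}(U)$. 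I would then average over the group, setting $u=\frac1{m_i}\sum_{\gamma\in\Gamma_i}u_0\circ\gamma$; because each $\gamma$ is an automorphism and $\gamma^\star\widetilde\Omega_i=\widetilde\Omega_i$, the function $u$ is smooth, strictly psh, $\Gamma_i$-invariant, and satisfies $dd^cu=\widetilde\Omega_i$. Being $\Gamma_i$-invariant, $u$ descends to a continuous function $v$ on $U$ with $u=v\circ\phi_i$, which is smooth on $U\cap X_{reg}^{orb}$ since $\phi_i$ restricts to an unramified covering there; and exactly as in Proposition \ref{P:11c}, the push-pull identity \eqref{e:pushpull} yields $T=dd^cv$ on $U\cap X_{reg}=U\setminus\Sigma$.

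It remains to upgrade $v$ from psh to \emph{strictly} psh on $U$, and this is the step I expect to be the main obstacle, because strict positivity must be transported across the branched cover $\phi_i$ and interpreted in the sense of the complex space $U$. My plan here is a comparison argument with a model potential. Embedding $\iota\colon U\hookrightarrow\mathbb C^N$ locally by $\Gamma_i$-invariant functions, with $\sigma_j=\iota_j\circ\phi_i$, the function $\psi_0=\sum_j|\iota_j|^2=|z|^2\circ\iota$ is, by definition, strictly psh on $U$, while its pullback $\psi_0\circ\phi_i=\sum_j|\sigma_j|^2$ is a \emph{smooth} psh function on $\widetilde U_i$, whose Levi form is therefore bounded on $\overline{\phi_i^{-1}(U)}$. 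Since $dd^cu=\widetilde\Omega_i$ is strictly positive, for a small enough $\delta>0$ one has $dd^c(u-\delta\,\psi_0\circ\phi_i)=\widetilde\Omega_i-\delta\,dd^c(\psi_0\circ\phi_i)\geq0$ on $\phi_i^{-1}(U)$, so $u-\delta\,\psi_0\circ\phi_i$ is $\Gamma_i$-invariant and psh. Descending this as above (psh-ness descends on the regular locus, and the result is psh on $U$ by continuity and \cite{D85}), I obtain that $v-\delta\psi_0$ is psh on $U$. Writing $v=(v-\delta\psi_0)+\delta\psi_0$ and extending the first summand to a psh function $\widetilde g$ near $\iota(x)$ in $\mathbb C^N$, the function $\widetilde g+\delta|z|^2$ is strictly psh and restricts to $v$ under $\iota$; hence $v$ is strictly psh on $U$.

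Putting these together, $T$ is a positive closed $(1,1)$ current on $X_{reg}$ admitting, near every $x\in X$, a strictly psh local potential $v$ with $T=dd^cv$ on $U\cap X_{reg}$, so $T$ is a K\"ahler current in the sense of Definition \ref{D:Kc}; moreover $v$ is continuous near each $x\in X$ and smooth near each $x\in X_{reg}^{orb}$, as required. The one point demanding care---and the crux of the argument---is the descent of strict plurisubharmonicity in the second paragraph: everything else is the bookkeeping of Propositions \ref{P:pqc} and \ref{P:11c}, whereas the comparison with $|z|^2\circ\iota$ is what genuinely uses that $\widetilde\Omega$ is \emph{strictly} positive rather than merely semipositive.
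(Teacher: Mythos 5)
Your proposal is correct and follows essentially the same route as the paper: the local potential is obtained by averaging a $dd^c$-potential of $\widetilde\Omega_i$ over $\Gamma_i$ and descending through $\phi_i$ (this is exactly what the paper extracts from Propositions \ref{P:11c} and \ref{P:G}), and strict plurisubharmonicity of $v$ is proved by the same comparison argument, since your $\delta\,\psi_0\circ\phi_i=\delta\sum_j|\sigma_j|^2$ is precisely the paper's $\varepsilon\|\phi_i\|^2$, followed by the same extension of $v-\delta\psi_0$ to the ambient $\mathbb{C}^N$ and adding back $\delta\|z\|^2$. The only cosmetic difference is that you re-derive the descent step rather than citing Proposition \ref{P:G}.
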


\begin{proof} Let $\widetilde\Omega=\{\widetilde\Omega_i\}$ be a K\"ahler form on $\mathcal X$ and let $\Omega=\boldsymbol{F}(\widetilde\Omega)\in{\mathcal T}$. Proposition \ref{P:G} and its proof shows that every $x\in X$ has a neighborhood $U\subset U_i$, for some orbifold chart $(\widetilde U_i, \Gamma_i, \phi_i)$, for which there exists a continuous function $v\in PSH(U)$ so that $u=v\circ\phi_i$ is smooth strictly psh on $\phi_i^{-1}(U)$, $\widetilde\Omega_i=dd^cu$ on $\phi_i^{-1}(U)$ and $\Omega=dd^cv$ on $U\cap X_{reg}$. Moreover $v$ is smooth if $x\in X_{reg}^{orb}$. To see that $v$ is strictly psh, consider any local embedding $U\hookrightarrow{\mathbb C}^N$ with coordinates $z=(z_1,\dots,z_N)$, so $\phi_i:\phi_i^{-1}(U)\longrightarrow{\mathbb C}^N$. Since $\widetilde\Omega_i$ is K\"ahler and $dd^c\|\phi_i\|^2$ is a smooth real form, by shrinking $U$ we can find $\varepsilon>0$ so that $\widetilde\Omega_i>\varepsilon dd^c\|\phi_i\|^2$ on $\phi_i^{-1}(U)$. This shows that $v\circ\phi_i-\varepsilon\|\phi_i\|^2$ is psh on $\phi_i^{-1}(U)$, hence the function $\rho(z)=v(z)-\varepsilon\|z\|^2$ is psh on $U$. Since $\rho$ extends to a psh function in the ambient space, it follows that the function $v$ is strictly psh on $U$.
\end{proof}

\medskip

\par We note that the notion of (non-closed positive) orbifold current is more restrictive than that of a (positive) current on $X_{reg}$\,. Indeed, an orbifold differential form determines a smooth form on $X_{reg}^{orb}$ whose coefficients may blow up at points of $X_{reg}\setminus X_{reg}^{orb}$\,. For instance, consider the (global) orbifold structure on ${\mathbb C}^2$ given by the analytic cover $\phi:{\mathbb C}^2\longrightarrow{\mathbb C}^2$, $\phi(z_1,z_2)=(z_1^2,z_2)$, whose orbifold singular locus is the line $\{x_1=0\}$. Then $\theta=\frac{i}{|x_1|}\,dx_1\wedge d\overline x_1$ is a smooth positive (1,1) orbifold form, since $\phi^\star\theta=4idz_1\wedge d\overline z_1$ is smooth.

\par A current $T$ on $X_{reg}$ arising from an orbifold current acts on such forms $\theta$ in the following way: assuming that ${\rm supp}\,\theta\subset U\subset X_{reg}$ for some orbifold chart $(\widetilde U,\Gamma,\phi)$, we have $T\mid_U=\frac{1}{m}\,\phi_\star\widetilde T$ for some current $\widetilde T$ on $\widetilde U$, where $m=|\Gamma|$. Hence we set $\big\langle\, T,\theta\,\big\rangle:=\frac{1}{m}\,\big\langle\,\widetilde T,\phi^\star\theta\,\big\rangle$. Returning to the previous example, we see that $T=\frac{i}{|x_1|}\,dx_2\wedge d\overline x_2$ is a positive $(1,1)$ current on ${\mathbb C}^2$ which does not arise from an orbifold current since $\int_KT\wedge\theta=+\infty$, where $K$ is the unit bidisk in ${\mathbb C}^2$.

\subsection{Singular Hermitian metrics on orbifold line bundles}\label{SS:singm}

\par We refer to \cite{D90} for the notion of singular Hermitian metric on a holomorphic line bundle over a complex manifold or complex space (see also \cite[p.\ 97]{MM07}).

\smallskip

\par Let $(L,{\mathcal X})$ be an orbifold line bundle over the orbifold ${\mathcal X}=(X,{\mathcal U})$. A singular Hermitian metric on $L$ is a collection $\widetilde h=\{\widetilde h_i\}$ of singular Hermitian metrics on the line bundles $(L_{\widetilde U_i},\widetilde U_i)$, for every orbifold chart $(\widetilde U_i, \Gamma_i, \phi_i)$, such that:

\par $(i)$ $\widetilde h_i$ is $\Gamma_i$-invariant: for every $\gamma\in\Gamma_i$ with induced linear map $\widetilde\gamma:L_{\widetilde U_i}\mid_x\longrightarrow L_{\widetilde U_i}\mid_{\gamma x}$ we have $\widetilde h_i(\widetilde\gamma\ell,\widetilde\gamma\ell')=\widetilde h_i(\ell,\ell')$ for all $\ell,\,\ell'\in L_{\widetilde U_i}\mid_x\,$, $x\in\widetilde U_i$\,.

\par $(ii)$ $\widetilde h_i$ satisfy the following gluing condition: if $\lambda_{ji}:\widetilde U_i\longrightarrow\widetilde U_j$ is an injection with associated bundle map $\widetilde\lambda_{ji}:L_{\widetilde U_j}\mid_{\lambda_{ji}(\widetilde U_i)}\longrightarrow L_{\widetilde U_i}$ then $\widetilde h_i(\widetilde\lambda_{ji}\ell,\widetilde\lambda_{ji}\ell')=\widetilde h_j(\ell,\ell')$ for all $\ell,\,\ell'\in L_{\widetilde U_j}\mid_{\lambda_{ji}(x)}$ and $x\in\widetilde U_i$.

\smallskip

\par The curvature current
$$c_1(L,\widetilde h):=\{c_1(L_{\widetilde U_i},\widetilde h_i)\}$$
is a well defined real closed (1,1) orbifold current. Indeed:

\par $(i)$ $c_1(L_{\widetilde U_i},\widetilde h_i)$ is $\Gamma_i$-invariant: by shrinking $U_i$ we may assume that $L_{\widetilde U_i}$ has a holomorphic frame $\widetilde e_i$ on $\widetilde U_i$, so $\widetilde h_i(\widetilde e_i,\widetilde e_i)=e^{-2\psi_i}$ for some function $\psi_i\in L^1_{loc}(\widetilde U_i)$. As $\widetilde\gamma\widetilde e_i$ is also a frame on $\widetilde U_i$, we have $\widetilde\gamma\widetilde e_i(x)=f(x)\widetilde e_i(\gamma x)$ for some non-vanishing holomorphic function $f$ on $\widetilde U_i$ and $|f|^2e^{-2\psi_i\circ\gamma}=\widetilde h_i(\widetilde\gamma\widetilde e_i,\widetilde\gamma\widetilde e_i)=\widetilde h_i(\widetilde e_i,\widetilde e_i)=e^{-2\psi_i}$. Hence $dd^c\psi_i\circ\gamma=dd^c\psi_i$, so $\gamma^\star c_1(L_{\widetilde U_i},\widetilde h_i)=c_1(L_{\widetilde U_i},\widetilde h_i)$.

\par $(ii)$ If $\lambda_{ji}:\widetilde U_i\longrightarrow\widetilde U_j$ is an injection then $\lambda_{ji}^\star(c_1(L_{\widetilde U_j},\widetilde h_j)\mid_{\lambda_{ji}(\widetilde U_i)})=c_1(L_{\widetilde U_i},\widetilde h_i)$: if $\widetilde e_j$ is a frame of $L_{\widetilde U_j}$ over some set $\lambda_{ji}(V)$ where $V\subset\widetilde U_i$ is open, then $\widetilde\lambda_{ji}\widetilde e_j$ is a frame of $L_{\widetilde U_i}$ over $V$, so $\widetilde\lambda_{ji}\widetilde e_j(\lambda_{ji}(x))=f(x)\widetilde e_i(x)$ for some non-vanishing holomorphic function $f$ on $V$. Thus
$$|f|^2e^{-2\psi_i}=\widetilde h_i(\widetilde\lambda_{ji}\widetilde e_j\lambda_{ji},\widetilde\lambda_{ji}\widetilde e_j\lambda_{ji})=\widetilde h_j(\widetilde e_j\lambda_{ji},\widetilde e_j\lambda_{ji})=e^{-2\psi_j\circ\lambda_{ji}}\;\;{\rm on}\;V\,,$$
which shows that $\lambda_{ji}^\star(dd^c\psi_j)=dd^c\psi_i$ on $V$.

\smallskip

\par We say that the metric $\widetilde h$ is (semi)positively curved if its curvature $c_1(L,\widetilde h)$ is a positive current.

\smallskip

\begin{Lemma}\label{L:secnorm} Let $(L,{\mathcal X})$ be an orbifold line bundle over the orbifold ${\mathcal X}=(X,{\mathcal U})$ endowed with a singular Hermitian metric $\widetilde h=\{\widetilde h_i\}$ and let $S=\{\widetilde S_i\}$ be a section of $L$. There exists a function on $X$, denoted by $|S|^2_{\widetilde h}$, so that for every orbifold chart $(\widetilde U_i, \Gamma_i, \phi_i)$ we have
$|S|^2_{\widetilde h}\circ\phi_i=|\widetilde S_i|^2_{\widetilde h_i}:=\widetilde h_i(\widetilde S_i,\widetilde S_i)$.
\end{Lemma}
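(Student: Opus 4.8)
The plan is to \emph{define} the desired function chartwise, by pushing each local norm function $|\widetilde S_i|^2_{\widetilde h_i}$ down through the covering map $\phi_i$, and then to check that the resulting functions on the $U_i$ patch together into a single function on $X$. There are thus two things to verify: first, that each $|\widetilde S_i|^2_{\widetilde h_i}$ is $\Gamma_i$-invariant, so that it descends to a function $f_i$ on $U_i=\phi_i(\widetilde U_i)$ with $f_i\circ\phi_i=|\widetilde S_i|^2_{\widetilde h_i}$; and second, that the $f_i$ agree on overlaps $U_i\cap U_j$ and hence glue.

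For the first step I would fix $\gamma\in\Gamma_i$ and evaluate the norm at $\gamma x$. Using the equivariance of the section, $\widetilde\gamma\big(\widetilde S_i(x)\big)=\widetilde S_i(\gamma x)$, together with the $\Gamma_i$-invariance of the metric, $\widetilde h_i(\widetilde\gamma\ell,\widetilde\gamma\ell')=\widetilde h_i(\ell,\ell')$, the computation is
\[
|\widetilde S_i|^2_{\widetilde h_i}(\gamma x)=\widetilde h_i\big(\widetilde\gamma\widetilde S_i(x),\widetilde\gamma\widetilde S_i(x)\big)=\widetilde h_i\big(\widetilde S_i(x),\widetilde S_i(x)\big)=|\widetilde S_i|^2_{\widetilde h_i}(x).
\]
Since $\phi_i$ induces a homeomorphism $\widetilde U_i/\Gamma_i\to U_i$, this invariance produces a well-defined function $f_i$ on $U_i$ with $f_i\circ\phi_i=|\widetilde S_i|^2_{\widetilde h_i}$.

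For the second step I would first treat an injection $\lambda_{ji}:\widetilde U_i\to\widetilde U_j$, so that $\phi_j\circ\lambda_{ji}=\phi_i$ and $U_i\subset U_j$. The section gluing condition gives $\widetilde S_i(x)=\widetilde\lambda_{ji}\big(\widetilde S_j(\lambda_{ji}(x))\big)$, and the metric gluing condition $\widetilde h_i(\widetilde\lambda_{ji}\ell,\widetilde\lambda_{ji}\ell')=\widetilde h_j(\ell,\ell')$ then yields
\[
|\widetilde S_i|^2_{\widetilde h_i}(x)=\widetilde h_j\big(\widetilde S_j(\lambda_{ji}(x)),\widetilde S_j(\lambda_{ji}(x))\big)=|\widetilde S_j|^2_{\widetilde h_j}(\lambda_{ji}(x)).
\]
Rewriting this through $\phi_j\circ\lambda_{ji}=\phi_i$ gives $f_i\circ\phi_i=f_j\circ\phi_i$ on $\widetilde U_i$, hence $f_i=f_j$ on $U_i$. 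For an arbitrary overlap, given $x\in U_i\cap U_j$ I would invoke the atlas axiom to produce a chart $(\widetilde U_k,\Gamma_k,\phi_k)$ with $x\in U_k$ and injections into both $\widetilde U_i$ and $\widetilde U_j$; the special case then gives $f_i=f_k=f_j$ near $x$. Thus the $f_i$ glue to a global function $|S|^2_{\widetilde h}$ on $X$ with the stated property.

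The computations are elementary, so the only genuine care needed is bookkeeping of the directions of the bundle maps $\widetilde\gamma$ and $\widetilde\lambda_{ji}$ (carrying fibers over $x$ to fibers over $\gamma x$, resp.\ between the two charts) against the directions in which the section and metric identities are phrased, and the passage from the special case $U_i\subset U_j$ to arbitrary overlaps via a common refinement chart. I would also note that, since $\widetilde h$ is merely singular, $|\widetilde S_i|^2_{\widetilde h_i}$ takes values in $[0,+\infty]$ and is finite almost everywhere; but all the identities above are pointwise equalities valid wherever the metric is finite, the maps $\widetilde\gamma,\widetilde\lambda_{ji}$ being fiberwise isomorphisms, so the singular nature of the metric causes no difficulty.
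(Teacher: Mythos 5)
Your proposal is correct and follows essentially the same route as the paper: prove $\Gamma_i$-invariance of $|\widetilde S_i|^2_{\widetilde h_i}$ to descend it to a function $f_i$ on $U_i$, then use the section and metric gluing conditions along an injection $\lambda_{ji}$ to show $f_j\mid_{U_i}=f_i$. Your extra remarks (passing from the injection case to arbitrary overlaps via a common refinement chart, and the harmlessness of the metric being merely singular) are points the paper leaves implicit, but they are consistent with its argument rather than a different one.
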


\begin{proof} Note that the function $|\widetilde S_i|^2_{\widetilde h_i}$ is $\Gamma_i$-invariant, as
$$\widetilde h_i\big(\widetilde S_i(\gamma(x)),\widetilde S_i(\gamma(x))\big)=\widetilde h_i\big(\widetilde\gamma\widetilde S_i(x),\widetilde\gamma\widetilde S_i(x)\big)=\widetilde h_i\big(\widetilde S_i(x),\widetilde S_i(x)\big)\,,\;\forall\,\gamma\in\Gamma_i\,.$$
Hence there exists as function $f_i$ on $U_i$ so that $|\widetilde S_i|^2_{\widetilde h_i}=f_i\circ\phi_i$. We have to show that if $\lambda_{ji}:\widetilde U_i\longrightarrow\widetilde U_j$ is an injection then $f_j\mid_{U_i}=f_i$. Indeed,
\begin{eqnarray*}
f_j\circ\phi_i(x)&=&f_j\circ\phi_j\circ\lambda_{ji}(x)=\widetilde h_j\big(\widetilde S_j(\lambda_{ji}(x)),\widetilde S_j(\lambda_{ji}(x))\big)\\
&=&\widetilde h_i\big(\widetilde\lambda_{ji}\widetilde S_j(\lambda_{ji}(x)),\widetilde\lambda_{ji}\widetilde S_j(\lambda_{ji}(x))\big)=\widetilde h_i\big(\widetilde S_i(x),\widetilde S_i(x)\big)=f_i\circ\phi_i(x),
\end{eqnarray*}
for every $x\in\widetilde U_i$.
\end{proof}

\begin{Remark}\label{R:secnorm} If $x\in X_{reg}^{orb}$ there exists an orbifold chart $\phi_i:\widetilde U_i\longrightarrow U_i$ so that $x\in U_i$ and $\phi_i$ is biholomorphic. Thus $L\mid_{U_i}\cong(\phi_i^{-1})^\star L_{\widetilde U_i}$ is a holomorphic line bundle with a singular Hermitian metric $h_i$ induced by $\widetilde h_i$. It follows that the holomorphic line bundle $L\mid_{X_{reg}^{orb}}$ has a singular Hermitian metric $h$ induced by $\widetilde h$ and
$$|S|_h^2:=h(S,S)=|S|^2_{\widetilde h}\mid_{X_{reg}^{orb}}$$
for every orbifold section $S:X\longrightarrow L$. Moreover, the curvature current
$$c_1(L\mid_{X_{reg}^{orb}},h)=\boldsymbol{F}(c_1(L,\widetilde h))\mid_{X_{reg}^{orb}}\,,$$
where $\boldsymbol{F}$ is the map constructed in Proposition \ref{P:pqc}.
\end{Remark}

\subsection{Bergman kernel and Fubini-Study currents}\label{SS:FS}

\par Let $(L,\mathcal X)$ be an orbifold line bundle over ${\mathcal X}=(X,\mathcal U)$ endowed with a singular Hermitian metric $\widetilde h=\{\widetilde h_i\}$, and let $\widetilde\Omega=\{\widetilde \Omega_i\}$ be a Hermitian form on $\mathcal X$. Here ${\mathcal U}=\{(\widetilde U_i,\Gamma_i,\phi_i)\}$, $U_i=\phi_i(\widetilde U_i)$, $m_i=|\Gamma_i|$. We denote by $h$ the singular Hermitian metric induced by $\widetilde h$ on the holomorphic line bundle $L\mid_{X_{reg}^{orb}}$\,. By Proposition \ref{P:pqc}, $\widetilde\Omega$ induces a positive (1,1) current $\Omega$ on $X_{reg}$, which clearly is a smooth Hermitian form on $X_{reg}^{orb}$ with $\phi_i^\star(\Omega\mid_{U_i})=\widetilde\Omega_i$.

\par The space $H_{(2)}^0(\mathcal X,L)$ of $L^2$-holomorphic sections with respect to this metric data is defined as follows. Fix a partition of unity $\{\chi_\ell\}_{l\geq1}$ on $X$ so that $\chi_\ell$ has compact support contained in $U_{i_\ell}$ and set
$$\|S\|^2=\sum_{l=1}^\infty\frac{1}{m_{i_\ell}}\,\int_{\widetilde U_{i_\ell}}(\chi_\ell\circ\phi_{i_\ell})|\widetilde S_{i_\ell}|_{\widetilde h_{i_\ell}}^2\,\widetilde\Omega_{i_\ell}^n\,,\;\;{\rm where}\;S=\{\widetilde S_i\}\in H^0(\mathcal X,L)\,.$$
Define $$H_{(2)}^0(\mathcal X,L)=\{S\in H^0(\mathcal X,L):\,\|S\|^2<+\infty\}\,,$$
endowed with the obvious inner product. If $A_i=U_i\cap X_{sing}^{orb}$ and $\widetilde A_i=\phi_i^{-1}(A_i)$, we note that
$$\frac{1}{m_{i_\ell}}\,\int_{\widetilde U_{i_\ell}\setminus\widetilde A_{i_\ell}}(\chi_\ell\circ\phi_{i_\ell})|\widetilde S_{i_\ell}|_{\widetilde h_{i_\ell}}^2\,\widetilde\Omega_{i_\ell}^n=\int_{ U_{i_\ell}\setminus A_{i_\ell}}\chi_\ell\,|S|_{\widetilde h}^2\;\Omega^n\,.$$
It follows that
$$\|S\|^2=\int_{X_{reg}^{orb}}|S|_{\widetilde h}^2\,\Omega^n=\int_{X_{reg}^{orb}}|S|_h^2\,\Omega^n\,,$$
where $|S|_{\widetilde h}^2$ is the function from Lemma \ref{L:secnorm}, and the same holds for the inner product.

\medskip

\par Since $H^0_{(2)}(\mathcal X,L)$ is separable, let $\{S_j\}_{j\geq1}$ be an orthonormal basis and denote by $P$ the function defined on $X$ by
\begin{equation}\label{e:Bergfcn}
P=\sum_{j=1}^\infty|S_j|_{\widetilde h}^2\,.
\end{equation}
This function is independent of the choice of basis (see Lemma \ref{L:Bergfcn}) and it is called the {\em Bergman kernel function} associated to the space $H^0_{(2)}(\mathcal X,L)$.

\par The \emph{orbifold Fubini-Study current} $\widetilde\alpha=\{\widetilde\alpha_i\}$ is defined as follows: given a chart $(\widetilde U_i,\Gamma_i,\phi_i)$ we may assume that $L_{\widetilde U_i}$ has a holomorphic frame $\widetilde e_i$ on $\widetilde U_i$. If $S_j=\{\widetilde S_{j,i}\}$ we write $\widetilde S_{j,i}=s_{j,i}\widetilde e_i$ for some holomorphic functions $s_{j,i}$ on $\widetilde U_i$, and we set
\begin{equation}\label{e:FSc}
\widetilde\alpha_i=\frac{1}{2}\,dd^c\log\left(\sum_{j=1}^\infty|s_{j,i}|^2\right)\,,
\end{equation}
where $d^c=\frac{1}{2\pi i}(\partial-\overline\partial)$. To explain the terminology, let us assume
that $H^0_{(2)}(\mathcal{X},L)$ is finite dimensional and non-trivial. Denote by $\omega_{FS}$ the Fubini-Study metric on $\mathbb{P}(H^0_{(2)}(\mathcal{X},L)^*)$ induced by the $L^2$ inner product on $H^0_{(2)}(\mathcal{X},L)$. 
Consider the Kodaira map 
\[\Phi:X\dashrightarrow\mathbb{P}(H^0_{(2)}(\mathcal{X},L)^*)\,,\quad x\mapsto\big\{S\in H^0_{(2)}(\mathcal{X},L):S(x)=0\big\}.\]
Then $\widetilde\alpha=\{\widetilde\alpha_i\}$, $\widetilde\alpha_i=(\Phi\circ\phi_i)^*\omega_{FS}$\,, is the orbifold Fubini-Study current.

\medskip

\par For the convenience of the reader, we include a proof of some properties of these notions in our setting.

\begin{Lemma}\label{L:Bergfcn} If the singular metrics $\widetilde h_i$ have locally upper bounded weights, then:

\par (i) $\widetilde\alpha$ is a well defined positive closed current of bidegree (1,1) on $\mathcal X$.

\par  (ii) The function $P$ is independent of the choice of basis $\{S_j\}_{j\geq1}$ and
\begin{equation}\label{e:mBK}
P(x)=\max\big\{|S|^2_{\widetilde h}(x):\,S\in H^0_{(2)}(\mathcal X,L),\;\|S\|=1\big\}\,,\;\;\text{for all $x\in X$}\,.
\end{equation}

\par $(iii)$ On each chart, $\log P\circ\phi_i\in L^1_{loc}(\widetilde U_i,\widetilde\Omega_i^n)$ and
$$2\widetilde\alpha_i=2c_1(L\mid_{\widetilde U_i},\widetilde h_i)+dd^c\log P\circ\phi_i\,,$$
so $\widetilde\alpha$  is independent of the choice of basis $\{S_j\}_{j\geq1}$.
\end{Lemma}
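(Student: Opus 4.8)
The plan is to prove the three assertions of Lemma \ref{L:Bergfcn} in the order (ii), (i), (iii), since the maximum characterization in (ii) is the cleanest foothold and it feeds naturally into the convergence and regularity facts needed afterward. First I would establish \eqref{e:mBK}. Fix $x\in X$ and a lift $y\in\phi_i^{-1}(x)$ in some chart. The map $S\mapsto\widetilde S_i(y)\in L_{\widetilde U_i}\mid_y$ is a bounded linear functional on the Hilbert space $H^0_{(2)}(\mathcal X,L)$ (boundedness follows from the submean-value property of plurisubharmonic functions together with the locally-upper-bounded-weight hypothesis, which makes pointwise evaluation dominated by the $L^2$ norm on a small ball around $y$). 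Writing the evaluation in terms of the orthonormal basis, Parseval's identity gives $\sum_j|S_j|^2_{\widetilde h}(x)=\sup\{|S|^2_{\widetilde h}(x):\|S\|=1\}$ in the usual reproducing-kernel way, and the supremum is attained by the Riesz representative, giving the maximum in \eqref{e:mBK}. Basis-independence of $P$ is then immediate, since the right-hand side of \eqref{e:mBK} makes no reference to a basis.

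Next I would prove (i). On a chart $(\widetilde U_i,\Gamma_i,\phi_i)$ with holomorphic frame $\widetilde e_i$, the functions $s_{j,i}$ are holomorphic, so $\sum_j|s_{j,i}|^2$ is a sum of moduli-squared of holomorphic functions; its logarithm is plurisubharmonic wherever the sum is positive and not identically $-\infty$ (the upper-bounded-weight hypothesis guarantees local finiteness of the sum and that it is not identically zero on $\widetilde U_i$). Hence each $\widetilde\alpha_i=\tfrac12 dd^c\log\sum_j|s_{j,i}|^2$ is a positive closed $(1,1)$ current. To check that the collection $\{\widetilde\alpha_i\}$ is a genuine orbifold current I would verify the two compatibility conditions of Section \ref{SS:currents}: $\Gamma_i$-invariance and the injection-gluing relation. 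Both reduce to the transformation laws for the frames. Under $\gamma\in\Gamma_i$ one has $\widetilde\gamma\widetilde e_i(x)=f(x)\widetilde e_i(\gamma x)$ with $f$ non-vanishing holomorphic, and $\Gamma_i$-equivariance of the sections $S_j$ forces $s_{j,i}\circ\gamma=f\,s_{j,i}$ (up to the same $f$ for all $j$), so $\sum_j|s_{j,i}\circ\gamma|^2=|f|^2\sum_j|s_{j,i}|^2$ and the $dd^c\log$ kills the $|f|^2$ factor, yielding $\gamma^\star\widetilde\alpha_i=\widetilde\alpha_i$. The injection relation $\lambda_{ji}^\star(\widetilde\alpha_j\mid_{\lambda_{ji}(\widetilde U_i)})=\widetilde\alpha_i$ follows identically from the cocycle $\widetilde\lambda_{ji}\widetilde e_j(\lambda_{ji}(x))=g(x)\widetilde e_i(x)$ relating the frames.

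Finally, for (iii) I would work again in a fixed chart. From $\widetilde h_i(\widetilde e_i,\widetilde e_i)=e^{-2\psi_i}$ we get $|\widetilde S_{j,i}|^2_{\widetilde h_i}=|s_{j,i}|^2 e^{-2\psi_i}$, so Lemma \ref{L:secnorm} gives $P\circ\phi_i=\big(\sum_j|s_{j,i}|^2\big)e^{-2\psi_i}$, hence $\log P\circ\phi_i=\log\sum_j|s_{j,i}|^2-2\psi_i$. Applying $\tfrac12 dd^c$ and recalling $c_1(L\mid_{\widetilde U_i},\widetilde h_i)=dd^c\psi_i$ yields the claimed identity $2\widetilde\alpha_i=2c_1(L\mid_{\widetilde U_i},\widetilde h_i)+dd^c\log P\circ\phi_i$; since $c_1(L,\widetilde h)$ and $P$ are both basis-independent, so is $\widetilde\alpha$. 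The local integrability $\log P\circ\phi_i\in L^1_{loc}(\widetilde U_i,\widetilde\Omega_i^n)$ is the one point requiring care: the function $\log\sum_j|s_{j,i}|^2$ is plurisubharmonic and therefore locally integrable, while $\psi_i\in L^1_{loc}(\widetilde U_i)$ by the definition of a singular metric, so the difference lies in $L^1_{loc}$ as well. I expect the main obstacle to be the boundedness of the evaluation functional underpinning \eqref{e:mBK} in the orbifold/singular-space setting; one must ensure that pointwise values of sections are genuinely controlled by the $L^2$ norm uniformly near a point, which is exactly where the locally-upper-bounded-weights hypothesis is used and where a Cauchy-type submean estimate on the orbifold chart must be invoked carefully.
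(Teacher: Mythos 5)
Your route is in substance the same as the paper's: the analytic engine is the sub-mean-value estimate made available by the locally upper bounded weights; your Riesz-representative argument for \eqref{e:mBK} is exactly the paper's explicit construction (the paper exhibits the extremal section $S_a$ with $a_j=c^{-1}\,\overline{s_{j,i}(y)}$, which \emph{is} the normalized Riesz representative, and gets the upper bound by Cauchy--Schwarz); your frame-transformation computations for the $\Gamma_i$-invariance and the injection-gluing of $\{\widetilde\alpha_i\}$ are the argument the paper points to when it says the proof is ``similar to the one showing that $c_1(L,\widetilde h)$ is a well-defined orbifold current''; and (iii) is the same direct computation from Lemma \ref{L:secnorm}.

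There is, however, one genuine gap, in your proof of (i). You assert that $\log\sum_j|s_{j,i}|^2$ is plurisubharmonic because the hypothesis ``guarantees local finiteness of the sum.'' That implication is not valid as stated: $\sum_j|s_{j,i}|^2$ is an increasing limit of continuous psh functions, hence a priori only lower semicontinuous, and a pointwise finite increasing limit of psh functions can fail to be upper semicontinuous (e.g.\ $u_N(z)=\max\{N^{-1}\log|z|,-1\}$ increases to a limit that is not usc at $0$), so psh-ness does not follow from finiteness alone. What is actually needed --- and what the paper spends the bulk of its proof establishing, via the Riesz--Fischer description of $H^0_{(2)}(\mathcal X,L)$ as $\{S_a:a\in l^2\}$, the chain of sub-mean-value estimates, and the argument of \cite[Lemma 3.1]{CM11} --- is that the series $\sum_j|s_{j,i}|^2$ converges \emph{locally uniformly} on $\widetilde U_i$; then the sum is continuous and its logarithm is psh, being the usc increasing limit of the psh logarithms of the partial sums. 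The good news is that your own step for (ii) supplies the missing ingredient: the uniform bound on the evaluation functionals over a compact set $K$ gives $\sup_K\sum_j|s_{j,i}|^2<\infty$ via Parseval; since each tail $\sum_{j\geq N}|s_{j,i}|^2$ still satisfies the sub-mean-value inequality and its integral over a slightly larger compact set tends to $0$ by dominated convergence, the tails tend to $0$ uniformly on $K$. Inserting this step closes the gap and makes your proof essentially identical to the paper's.
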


\begin{proof} By the Riesz-Fischer theorem we have that $S\in H^0_{(2)}(\mathcal X,L)$ if and only if there exists a sequence $a=\{a_j\}\in l^2$ so that $S=S_a$, where $S_a=\sum_{j=1}^\infty a_jS_j$ and  $\|S_a\|=\|a\|_2$\,.

\smallskip

\par Given a chart $U_i\subset X$ so that $L_{\widetilde U_i}$ has a holomorphic frame $\widetilde e_i$ on $\widetilde U_i$, we write $S_j=\{\widetilde S_{j,\,i}\}$,  $S_a=\{\widetilde S_{a,\,i}\}$, $\widetilde S_{j,\,i}=s_{j,\,i}\widetilde e_i$, $\widetilde S_{a,\,i}=s_{a,\,i}\widetilde e_i$, with holomorphic functions $s_{j,\,i},\,s_{a,\,i}$ on $\widetilde U_i$, and $|\widetilde e_i|^2_{\widetilde h_i}=e^{-2\psi_i}$. It follows that $s_{a,\,i}=\sum_{j=1}^\infty a_js_{j,\,i}$ and the series converges locally uniformly on $\widetilde U_i$. Indeed, if $K_1\Subset K_2\subset\widetilde U_i$ are compact sets then, since $\psi_i$ is locally upper bounded, we have
\begin{eqnarray*}
\max_{K_1}\big|\sum_{j=N}^Ma_js_{j,\,i}\big|^2 & \leq & C_1\int_{K_2}\big|\sum_{j=N}^Ma_js_{j,\,i}\big|^2\,\widetilde\Omega_i^n\leq C_2\int_{K_2}\big|\sum_{j=N}^Ma_j\widetilde S_{j,\,i}\big|_{\widetilde h_i}^2\,\widetilde\Omega_i^n \\
& \leq & C_2m_i\int_{\phi_i(K_2)\cap X_{reg}^{orb}}\big|\sum_{j=N}^Ma_jS_j\big|_{\widetilde h}^2\,\Omega^n \\
& \leq & C_2m_i\big\|\sum_{j=N}^Ma_jS_j\big\|^2=C_2m_i\sum_{j=N}^M|a_j|^2\,.
\end{eqnarray*}
As this holds for every sequence $a\in l^2$ we see that $\{s_{j,\,i}(z)\}_{j\geq1}\in l^2$ for all $z\in\widetilde U_i$. Using this and the same argument from the proof of \cite[Lemma 3.1]{CM11} we show that the series $\sum_{j=1}^\infty|s_{j,\,i}|^2$ converges locally uniformly on $\widetilde U_i$\,, so its logarithm is a psh function and the current $\widetilde\alpha_i$ is a positive closed current on $\widetilde U_i$. The proof that $\widetilde\alpha\in\widetilde{\mathcal T}$ is similar to the one showing that $c_1(L,\widetilde h)$ is a well-defined orbifold current.

\smallskip

\par By Lemma \ref{L:secnorm} we see that
$$P\circ\phi_i=\sum_{j=1}^\infty|\widetilde S_{j,\,i}|^2_{\widetilde h_i}=e^{-2\psi_i}\sum_{j=1}^\infty|s_{j,\,i}|^2,$$
which implies $(iii)$. To prove $(ii)$ we let $x\in U_i$, for $U_i$ as above. If $a\in l^2$, $\|a\|_2=1$, and $S_a=\sum_{j=1}^\infty a_jS_j$ then on $\widetilde U_i$, by the Cauchy-Schwarz inequality,
$$|S_a|^2_{\widetilde h}\circ\phi_i=|\widetilde S_{a,\,i}|^2_{\widetilde h_i}\leq\sum_{j=1}^\infty|\widetilde S_{j,\,i}|^2_{\widetilde h_i}=P\circ\phi_i\,.$$
Moreover, if $y\in\phi_i^{-1}(x)$ set
$$a=\left\{c^{-1}\,\overline{s_{j,\,i}(y)}\right\}_{j\geq1},\;c:=\left(\sum_{j=1}^\infty|s_{j,\,i}(y)|^2\right)^{1/2}.$$
Then $\|a\|_2=1$, $\widetilde S_{a,\,i}(y)=c\widetilde e_i$, so $|S_a|^2_{\widetilde h}(x)=|\widetilde S_{a,\,i}(y)|^2_{\widetilde h_i}=|c|^2e^{-2\psi_i(y)}=P(x)$.
\end{proof}

\section{Proofs of Theorems \ref{T:mt1}, \ref{T:mt2} and \ref{T:mt3}}\label{S:pfmt}
Now we will prove the main results. In Section \ref{SS:extsec} we examine the extension of holomorphic square integrable sections defined on the orbifold regular locus. This yields the fact that the logarithm of the Bergman kernel is locally the difference of two psh functions.
In Section \ref{SS:dbar} we recall the $L^2$ estimates for $\overline\partial$ in the form we use them. In Section \ref{SS:pfmt} we prove the weak asymptotics of the Bergman kernel, $\frac{1}{p}\log P_p\rightarrow 0$ as $p\to\infty$, in $L^1_{loc}$, and deduce Theorems \ref{T:mt1}, \ref{T:mt2} and \ref{T:mt3}.

\subsection{Extension of holomorphic sections}\label{SS:extsec}

\par We begin with two lemmas, which are formulated in a general context. Let $(L,\mathcal X)$ be an orbifold line bundle over ${\mathcal X}=(X,\mathcal U)$ endowed with a singular metric $\widetilde h=\{\widetilde h_i\}$, and let $\widetilde\Omega=\{\widetilde \Omega_i\}$ be a Hermitian form on $\mathcal X$. As before, we set ${\mathcal U}=\{(\widetilde U_i,\Gamma_i,\phi_i)\}$, $U_i=\phi_i(\widetilde U_i)$, $m_i=|\Gamma_i|$. Let $h$ be the singular metric induced by $\widetilde h$ on the line bundle $L\mid_{X_{reg}^{orb}}\,$, and $\Omega$ be the Hermitian form induced by $\widetilde\Omega$ on $X_{reg}^{orb}$\,.

\begin{Lemma}\label{L:Sext} Assume that the singular metrics $\widetilde h_i$ have weights that are locally upper bounded and that
$$S\in H^0(X_{reg}^{orb},L\mid_{X_{reg}^{orb}})\,,\;\;\|S\|^2=\int_{X_{reg}^{orb}}|S|_h^2\,\Omega^n<+\infty\,.$$
Then $S$ extends to a holomorphic section of $L$ over $\mathcal X$ and $S\in H_{(2)}^0(\mathcal X,L)$.
\end{Lemma}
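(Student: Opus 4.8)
The plan is to argue chart by chart, extending the pulled-back section across the ramification locus by means of the $L^2$ hypothesis, and then to check compatibility and $L^2$-finiteness. Fix an orbifold chart $(\widetilde U_i,\Gamma_i,\phi_i)$ and put $\widetilde A_i=\phi_i^{-1}(U_i\cap X_{sing}^{orb})$, which is exactly the union of the fixed-point sets of the nontrivial elements of $\Gamma_i$; since $\Gamma_i$ acts effectively, each such fixed-point set is a proper analytic subset of $\widetilde U_i$, so $\codim\widetilde A_i\geq1$ and $\widetilde A_i$ has Lebesgue measure zero. On $\widetilde U_i\setminus\widetilde A_i$ the map $\phi_i$ is an unramified cover onto $U_i\cap X_{reg}^{orb}$, so the given section $S$ of $L\mid_{X_{reg}^{orb}}$ pulls back to a holomorphic $\Gamma_i$-equivariant section $\widetilde S_i$ of $L_{\widetilde U_i}$ over $\widetilde U_i\setminus\widetilde A_i$ (this is the inverse of the descent construction of Section \ref{SS:olb}). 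After shrinking $\widetilde U_i$ I pick a holomorphic frame $\widetilde e_i$ with $|\widetilde e_i|^2_{\widetilde h_i}=e^{-2\psi_i}$ and write $\widetilde S_i=s_i\widetilde e_i$, with $s_i$ holomorphic on $\widetilde U_i\setminus\widetilde A_i$.

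The crux is to extend $s_i$ holomorphically across $\widetilde A_i$. First I would localize the $L^2$ bound: the identity $\frac{1}{m_i}\int_{\widetilde U_i\setminus\widetilde A_i}(\chi\circ\phi_i)|\widetilde S_i|^2_{\widetilde h_i}\,\widetilde\Omega_i^n=\int_{U_i\setminus A_i}\chi\,|S|_h^2\,\Omega^n$ of Section \ref{SS:FS}, together with the hypothesis $\int_{X_{reg}^{orb}}|S|_h^2\,\Omega^n<+\infty$, gives $\int_K|\widetilde S_i|^2_{\widetilde h_i}\,\widetilde\Omega_i^n<+\infty$ for every compact $K\subset\widetilde U_i$. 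Here the assumption that the weights $\psi_i$ are locally upper bounded is essential: it forces $e^{-2\psi_i}$ to be locally bounded below by a positive constant, whence $|s_i|^2\leq C\,|\widetilde S_i|^2_{\widetilde h_i}$ locally and $s_i\in L^2_{loc}(\widetilde U_i)$ with respect to the smooth volume $\widetilde\Omega_i^n$, equivalently with respect to Lebesgue measure. A holomorphic function on the complement of an analytic set that is locally square integrable across it extends holomorphically: on a smooth codimension-one piece $\{z_n=0\}$ one uses the one-variable Laurent expansion in $z_n$ fiberwise, square integrability killing every negative Fourier mode, and the remaining part of $\widetilde A_i$, of codimension $\geq2$, is removed by the second Riemann extension theorem. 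Thus $s_i$, hence $\widetilde S_i$, extends holomorphically to all of $\widetilde U_i$.

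It remains to assemble these extensions into a genuine orbifold section and to verify $L^2$-finiteness. The equivariance relations $\widetilde\gamma\circ\widetilde S_i=\widetilde S_i\circ\gamma$ for $\gamma\in\Gamma_i$ and the gluing relations $\widetilde\lambda_{ji}\circ\widetilde S_j\circ\lambda_{ji}=\widetilde S_i$ hold on the dense open subset $\widetilde U_i\setminus\widetilde A_i$, where $\widetilde S_i$ coincides with the pullback of $S$; since all sections involved are now holomorphic on $\widetilde U_i$, the identity principle propagates these relations to all of $\widetilde U_i$. Hence $\{\widetilde S_i\}$ is a holomorphic section $S\in H^0(\mathcal X,L)$ extending the original one. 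Finally, because $\widetilde A_i$, and therefore $A_i=U_i\cap X_{sing}^{orb}$, is a null set, the norm of the extended section, given by $\|S\|^2=\int_{X_{reg}^{orb}}|S|_h^2\,\Omega^n$, equals the finite quantity in the hypothesis, so $S\in H_{(2)}^0(\mathcal X,L)$.

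The main obstacle is precisely the extension across the codimension-one part of $\widetilde A_i$: without the lower bound on $e^{-2\psi_i}$ coming from the upper-boundedness of the weights, finiteness of $\int|\widetilde S_i|^2_{\widetilde h_i}\,\widetilde\Omega_i^n$ would not rule out poles of $s_i$ along $\widetilde A_i$, and the extension would fail.
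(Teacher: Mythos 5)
Your proof is correct and follows essentially the same route as the paper's: pull the section back to each chart off the branch locus $\widetilde A_i$, use the locally upper bounded weights $\psi_i$ to convert the $L^2$ hypothesis into a local $L^2$ bound on the frame coefficient, extend holomorphically across $\widetilde A_i$, and propagate the equivariance and gluing relations from the dense open set $\widetilde U_i\setminus\widetilde A_i$ by the identity principle. The only difference is cosmetic: where the paper invokes Skoda's lemma \cite[Lemma 2.3.22]{MM07} for the extension step, you reprove it by hand (fiberwise Laurent expansion across the smooth codimension-one part, then the second Riemann extension theorem for the rest).
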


\begin{proof} Without loss of generality, we can consider an orbifold chart so that $L_{\widetilde U_i}$ has a holomorphic frame $\widetilde e_i$ on $\widetilde U_i$ and we let $|\widetilde e_i|^2_{\widetilde h_i}=e^{-2\psi_i}$. Set $A_i=U_i\cap X_{sing}^{orb}$ and $\widetilde A_i=\phi_i^{-1}(A_i)$. The action of $\gamma\in\Gamma_i$ on $L_{\widetilde U_i}$ is defined by $\widetilde\gamma(\widetilde e_i(y))=h_i(\gamma)\widetilde e_i(\gamma y)$, where $h_i$ is a group homomorphism of $\Gamma_i$ to the group of roots of order $m_i$ of unity.

\par Using the notation from Section \ref{SS:olb}, it follows that if $y\in\widetilde U_i\setminus\widetilde A_i$ we can define $\widetilde S_i(y)$ as the unique element of the set $\widetilde\phi_i^{-1}(S(\phi_i(y)))$ that lies in the fiber $L_{\widetilde U_i}\mid_y$, and $\widetilde S_i$ is a holomorphic section of $L_{\widetilde U_i}$ over $\widetilde U_i\setminus\widetilde A_i$. Writing $\widetilde S_i=f\widetilde e_i$, we see that
$$\widetilde\phi_i^{-1}(S(\phi_i(y)))=\{f(y)h_i(\gamma)\widetilde e_i(\gamma y):\,\gamma\in\Gamma_i\}\,,$$
so $\widetilde\gamma(\widetilde S_i(y))=f(y)h_i(\gamma)\widetilde e_i(\gamma y)=\widetilde S_i(\gamma(y))$. Since $\psi_i$ is locally upper bounded we have for any compact $K\subset\widetilde U_i$,
$$\int_{K\setminus\widetilde A_i}|f|^2\,\widetilde\Omega_i^n\leq C_K\int_{K\setminus\widetilde A_i}|\widetilde S_i|^2_{\widetilde h_i}\,\widetilde\Omega_i^n\leq C_Km_i\int_{\phi_i(K)\setminus A_i}|S|_h^2\,\Omega^n<+\infty\,.$$
By Skoda's lemma \cite[Lemma\,2.3.22]{MM07}, we conclude that $\widetilde S_i$ extends to an equivariant holomorphic section of $L_{\widetilde U_i}$ over $\widetilde U_i$. If $\lambda_{ji}:\widetilde U_i\longrightarrow\widetilde U_j$ is an injection we have $\lambda_{ji}(\widetilde U_i\setminus\widetilde A_i)=\lambda_{ji}(\widetilde U_i)\setminus\widetilde A_j$. Indeed, for $y\in\widetilde U_i$ we have that the stabilizer of $\lambda_{ji}(y)$ is $(\Gamma_j)_{\lambda_{ji}(y)}=\lambda_{ji}((\Gamma_i)_y)$ (see \eqref{e:coset}). Using this we verify that the gluing condition $\widetilde\lambda_{ji}\circ\widetilde S_j\circ\lambda_{ji}=\widetilde S_i$ holds on $\widetilde U_i\setminus\widetilde A_i$, hence on $\widetilde U_i$.
\end{proof}

\begin{Lemma}\label{L:symp} Assume that the current $c_1(L,\widetilde h)$ is positive and let $\widetilde\alpha=\{\widetilde\alpha_i\}$ be the Fubini-Study current associated to $H_{(2)}^0(\mathcal X,L)$. Then every point $x\in X$ has a neighborhood $U$ contained in some chart $U_i$ on which there exist psh functions $v=v_x(\widetilde\alpha)$, $u=u_x(c_1(L,\widetilde h))$, so that $\widetilde\alpha_i=dd^c(v\circ\phi_i)$, $c_1(L\mid_{\widetilde U_i},\widetilde h_i)=dd^c(u\circ\phi_i)$ hold on $\phi_i^{-1}(U)$, and $2v-2u=\log P$ holds on $U$.
\end{Lemma}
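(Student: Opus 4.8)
The plan is to work locally on a chart and assemble the two potentials separately, using the machinery already developed for the curvature current and the Fubini-Study current.

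First I would fix $x\in X$ and choose an orbifold chart $(\widetilde U_i,\Gamma_i,\phi_i)$ with $x\in U_i$ on which $L_{\widetilde U_i}$ admits a holomorphic frame $\widetilde e_i$, writing $|\widetilde e_i|^2_{\widetilde h_i}=e^{-2\psi_i}$ with $\psi_i\in L^1_{loc}(\widetilde U_i)$, so that $c_1(L\mid_{\widetilde U_i},\widetilde h_i)=dd^c\psi_i$. Since $c_1(L,\widetilde h)\geq0$ is a positive closed orbifold $(1,1)$-current, Proposition~\ref{P:G} (applied to $\boldsymbol{F}(c_1(L,\widetilde h))\in\mathcal T$, via Proposition~\ref{P:11c}) provides a neighborhood $U\subset U_i$ of $x$ and a psh function $u$ on $U$ with $u\circ\phi_i=\psi_i$ on $\phi_i^{-1}(U)$, hence $c_1(L\mid_{\widetilde U_i},\widetilde h_i)=dd^c(u\circ\phi_i)$ there. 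The same argument applied to the positive closed orbifold current $\widetilde\alpha\in\widetilde{\mathcal T}$ (which is such a current by Lemma~\ref{L:Bergfcn}(i), noting that the locally upper bounded weights hypothesis is in force) yields, after possibly shrinking $U$, a psh function $v$ on $U$ with $\widetilde\alpha_i=dd^c(v\circ\phi_i)$ on $\phi_i^{-1}(U)$.

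With these potentials in hand, the remaining task is to verify the algebraic relation $2v-2u=\log P$ on $U$. Here I would invoke the pointwise identity from Lemma~\ref{L:Bergfcn}(iii), namely $2\widetilde\alpha_i=2\,dd^c\psi_i+dd^c\log(P\circ\phi_i)$, equivalently $dd^c\big(2(v\circ\phi_i)-2(u\circ\phi_i)-\log(P\circ\phi_i)\big)=0$ on $\phi_i^{-1}(U)$. Combined with the explicit computation $P\circ\phi_i=e^{-2\psi_i}\sum_j|s_{j,i}|^2$ and $\widetilde\alpha_i=\tfrac12 dd^c\log\big(\sum_j|s_{j,i}|^2\big)$ from \eqref{e:FSc} and \eqref{e:Bergfcn}, one gets that $2(v\circ\phi_i)-2(u\circ\phi_i)$ and $\log(P\circ\phi_i)$ differ by a pluriharmonic function on $\phi_i^{-1}(U)$; concretely $v\circ\phi_i=\tfrac12\log\sum_j|s_{j,i}|^2+\text{(pluriharmonic)}$ and $u\circ\phi_i=\psi_i$, so the difference $2(v\circ\phi_i)-2(u\circ\phi_i)-\log(P\circ\phi_i)$ is pluriharmonic. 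Since the functions $u,v$ are only determined up to pluriharmonic summands anyway, I would simply \emph{normalize} the choices: fix $v$ by setting $v\circ\phi_i=\tfrac12\log\sum_j|s_{j,i}|^2$ (this descends to $U$ because the summand is $\Gamma_i$-invariant and compatible with injections, as in Lemma~\ref{L:Bergfcn}) and fix $u\circ\phi_i=\psi_i$. Then $2v\circ\phi_i-2u\circ\phi_i=\log\sum_j|s_{j,i}|^2-2\psi_i=\log(P\circ\phi_i)$ identically, which, since $\phi_i$ is surjective onto $U$, forces $2v-2u=\log P$ on $U$ as desired.

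The only genuine subtlety I anticipate is descent to $U$: one must check that the explicitly chosen potentials $\tfrac12\log\sum_j|s_{j,i}|^2$ and $\psi_i$ are $\Gamma_i$-invariant and behave correctly under injections $\lambda_{ji}$, so that they arise as pullbacks $v\circ\phi_i$, $u\circ\phi_i$ of genuine functions on $U$ rather than merely on the cover. The $\Gamma_i$-invariance of $\sum_j|s_{j,i}|^2$ follows from the equivariance of each section $S_j$ together with the transformation law for the frame $\widetilde e_i$ under $\widetilde\gamma$ (the roots-of-unity factors $h_i(\gamma)$ cancel in modulus, exactly as in the proof of Lemma~\ref{L:secnorm}), and the invariance of $\psi_i$ is the computation in Section~\ref{SS:singm}(i); both are already available, so this step is routine once correctly set up. I would also remark that $P>0$ near $x$ is needed for $\log P$ to be a genuine (not identically $-\infty$) function, which holds provided $H^0_{(2)}(\mathcal X,L)$ separates points generically; in the present context one simply works where $P$ is locally integrable, consistent with Lemma~\ref{L:Bergfcn}(iii).
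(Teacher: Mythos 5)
Your strategy differs from the paper's at exactly one point, and that point is where your argument has a gap. The paper never claims that the natural potentials $v_i=\tfrac12\log\big(\sum_j|s_{j,i}|^2\big)$ and $\psi_i$ are $\Gamma_i$-invariant; instead it \emph{averages} them, setting $\widetilde v=m_i^{-1}\sum_{\gamma\in\Gamma_i}v_i\circ\gamma$ and $\psi=m_i^{-1}\sum_{\gamma\in\Gamma_i}\psi_i\circ\gamma$. Since $\widetilde\alpha_i$ and $c_1(L_{\widetilde U_i},\widetilde h_i)$ are $\gamma$-invariant currents, these averages are still potentials of the same currents; they are $\Gamma_i$-invariant by construction, hence descend to psh functions on $U_i$; and because $\log P\circ\phi_i$ is already $\Gamma_i$-invariant ($\phi_i\circ\gamma=\phi_i$), the identity $2v_i-2\psi_i=\log P\circ\phi_i$ survives the averaging. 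This works for an arbitrary choice of frame $\widetilde e_i$.

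Your normalization, by contrast, requires that $\psi_i$ and $\sum_j|s_{j,i}|^2$ be $\Gamma_i$-invariant \emph{themselves}, and for a general frame this is false. Writing $\widetilde\gamma\,\widetilde e_i(y)=f_\gamma(y)\,\widetilde e_i(\gamma y)$, the computation in Section \ref{SS:singm}$(i)$ --- which you cite as giving the invariance of $\psi_i$ --- actually gives $|f_\gamma|^2e^{-2\psi_i\circ\gamma}=e^{-2\psi_i}$, i.e.\ $\psi_i\circ\gamma=\psi_i+\log|f_\gamma|$: what is invariant is $dd^c\psi_i$, not $\psi_i$. Likewise, equivariance of the sections gives $s_{j,i}(\gamma y)=f_\gamma(y)s_{j,i}(y)$, so $\sum_j|s_{j,i}|^2$ transforms by $|f_\gamma|^2$ as well (the two factors of $|f_\gamma|$ cancel only in the product $e^{-2\psi_i}\sum_j|s_{j,i}|^2=P\circ\phi_i$, which is why $P$ descends in Lemma \ref{L:secnorm} but neither factor does). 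Hence neither of your two functions descends to $U$ unless $|f_\gamma|\equiv1$, i.e.\ unless the frame is chosen so that $\Gamma_i$ acts on $L_{\widetilde U_i}$ by constant roots of unity, $\widetilde\gamma\,\widetilde e_i(y)=h_i(\gamma)\widetilde e_i(\gamma y)$ --- the fact asserted in the proof of Lemma \ref{L:Sext}, which you invoke only for $\sum_j|s_{j,i}|^2$. If you fix such an equivariant frame at the outset, both invariances do hold and your proof closes (with the descended functions psh on $U_i$ by the upper semicontinuity argument of Proposition \ref{P:11c}); without it, the normalization step fails. A smaller point: your first paragraph's claim that Proposition \ref{P:G} produces $u$ with $u\circ\phi_i=\psi_i$ is also not what that proposition gives --- its potential is precisely such a group average, differing from $\psi_i$ by the generally nonzero term $\log|f_\gamma|$ --- but this is harmless since you later redefine $u$.
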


\begin{Corollary}\label{C:Pdpsh} The function $\log P$ is locally the difference of two psh functions.
\end{Corollary}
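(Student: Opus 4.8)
The plan is to derive Corollary~\ref{C:Pdpsh} as an immediate consequence of Lemma~\ref{L:symp}. The statement is purely local, so it suffices to verify the claimed decomposition in a neighborhood of an arbitrary point $x\in X$. Lemma~\ref{L:symp} provides exactly such a neighborhood $U\subset U_i$, together with psh functions $v=v_x(\widetilde\alpha)$ and $u=u_x(c_1(L,\widetilde h))$ on $U$, satisfying the pointwise identity $\log P=2v-2u$ on $U$. Since $v$ and $u$ are each psh on $U$, the functions $2v$ and $2u$ are psh as well, and therefore $\log P$ is expressed locally as the difference $2v-2u$ of two psh functions, which is precisely the assertion.

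The only point requiring a word of care is that the decomposition furnished by Lemma~\ref{L:symp} is stated on a small neighborhood $U$ attached to each point $x$, whereas the corollary asserts a local decomposition near every point of $X$. But these are the same assertion: ``locally the difference of two psh functions'' means exactly that each point has such a neighborhood, which is what the lemma delivers. I would simply note that as $x$ ranges over $X$ the neighborhoods $U=U_x$ cover $X$, and on each of them $\log P=2v_x-2u_x$ with $v_x,u_x\in PSH(U_x)$.

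I do not expect any genuine obstacle here, since all the substantive work—constructing the local potentials $v$ and $u$ and establishing the identity $2v-2u=\log P$—is already carried out in Lemma~\ref{L:symp}, which I am entitled to invoke. The corollary is essentially a restatement harvesting the consequence of that lemma that is directly relevant for later arguments (in particular the local integrability of $\log P$, since a difference of psh functions is locally integrable, and the applicability of the calculus of currents to $dd^c\log P$). The proof is therefore a one-line deduction.

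\begin{proof}
Fix $x\in X$. By Lemma~\ref{L:symp} there is a neighborhood $U$ of $x$ and psh functions $v,u$ on $U$ with $\log P=2v-2u$ on $U$. Since $2v,2u\in PSH(U)$, this exhibits $\log P$ near $x$ as a difference of two psh functions. As $x\in X$ was arbitrary, the claim follows.
\end{proof}
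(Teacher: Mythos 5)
Your proof is correct and takes exactly the paper's route: the paper states Corollary~\ref{C:Pdpsh} immediately after Lemma~\ref{L:symp} with no separate argument, treating it as precisely the one-line consequence you spell out (each point has a neighborhood $U$ with $\log P=2v-2u$, $v,u\in PSH(U)$). Nothing is missing.
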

\begin{proof}[Proof of Lemma \ref{L:symp}]
Let $x\in U_i$. Assuming that $\widetilde e_i$ is a frame for $  L_{\widetilde U_i}$ we write $|\widetilde e_i|^2_{\widetilde h_i}=e^{-2\psi_i}$, where $\psi_i\in PSH(\widetilde U_i)$. By \eqref{e:FSc} and Lemma \ref{L:Bergfcn} we have
$$\widetilde\alpha_i=dd^c v_i\,,\;\;v_i=\frac{1}{2}\,\log\left(\sum_{j=1}^\infty|s_{j,i}|^2\right)\in PSH(\widetilde U_i)\,,\;2v_i-2\psi_i=\log P\circ\phi_i\,.$$
Letting $\widetilde v=m_i^{-1}\sum_{\gamma\in\Gamma_i}v_i\circ\gamma$, $\psi=m_i^{-1}\sum_{\gamma\in\Gamma_i}\psi_i\circ\gamma$, we obtain $\Gamma_i$-invariant psh functions so that $\widetilde\alpha_i=dd^c\widetilde v$, $c_1(L\mid_{\widetilde U_i},\widetilde h_i)=dd^c\psi$. Hence $\widetilde v=v\circ\phi_i$, $\psi=u\circ\phi_i$, for psh functions $v\,,u$ on $U_i$. Since $\phi_i\circ\gamma=\phi_i$ we have $2v_i\circ\gamma-2\psi_i\circ\gamma=\log P\circ\phi_i$, so $2\widetilde v-2\psi=\log P\circ\phi_i$. 
\end{proof}

\medskip

\subsection{Demailly's estimates for $\overline\partial$}\label{SS:dbar}

\par In order to prove our theorems we need the following variants of the existence theorem for $\overline\partial$ in the case of singular Hermitian metrics due to Demailly \cite{D82}. Let $(M,\Omega)$ be a Hermitian manifold of dimension $n$, $(L,h)$ be a singular Hermitian holomorphic line bundle. Let $\theta\geq0$ be a $(1,1)$-current and let $\theta_{abs}$ be the absolute continuous component in the Lebesgue decomposition of $\theta$. For any $\alpha\in\Lambda^{n,1}T_x^*X\otimes L_x$, $x\in M$, we define $|\alpha|_{\Omega,\,\theta}\in[0,\infty]$ to be the smallest number
satisfying
\[
\big|\big\langle\,\alpha,\beta\,\big\rangle\big|^2\leq\big|\alpha\big|_{\Omega,\,\theta}^2\,\big\langle\,\theta_{abs}\wedge\Lambda_\Omega\beta,\beta\,\big\rangle\,,\quad\text{for all $\beta\in\Lambda^{n,1}T_x^*X\otimes L_x$},
\]
where $\Lambda_\Omega$ is the interior product with $\Omega$. This number is independent of the metric: if $\Omega_1$ is another metric on $M$, then
$|\alpha|^2_{\Omega_1,\,\theta}\,\Omega^n_1=|\alpha|^2_{\Omega,\,\theta}\,\Omega^n$, cf.\ \cite[Lemme\,3.2\,(3.2)]{D82}. Note also that for any $\alpha\in\Lambda^{n,0}T_x^*X\otimes L_x$, $x\in M$, we have $|\alpha|^2_{\Omega_1}\,\Omega^n_1=|\alpha|^2_{\Omega}\,\Omega^n$, cf.\ \cite[Remarque 4.5]{D82}. Hence for $(n,1)$-forms the estimate in the following theorem is independent of the K\"ahler metric.

\begin{Theorem}[{\cite[Th\'eor\`eme 5.1]{D82}}]\label{l2est}
Let $(M,\Omega)$ be a K\"ahler manifold of dimension $n$ which admits a complete K\"ahler metric. Let $(L,h)$ a  singular Hermitian holomorphic line bundle such that $\theta=c_1(L,h)\geq0$.
Then for any form $g\in L_{n,1}^2(M,L,loc)$ satisfying
\[
{\overline\partial}g=0\,,\quad \int_M|g|^2_{\Omega,\,\theta}\,\Omega^n<+\infty
\]
 there exists $u\in L_{n,0}^2(M,L,loc)$ with $\overline\partial u=g$ and
$$\int_M|u|^2_{\Omega}\,\Omega^n\leq\int_M|g|^2_{\Omega,\,\theta}\,\Omega^n\,.$$
\end{Theorem}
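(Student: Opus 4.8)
The statement is Demailly's $L^2$-existence theorem, so the plan is to reproduce the standard Bochner--Kodaira--Nakano argument combined with the functional-analytic solvability criterion for $\overline\partial$, and then to remove the smoothness hypothesis on $h$ by regularization. First I would recall the abstract existence principle: to solve $\overline\partial u=g$ with $g\in L^2_{n,1}$ closed and with the stated bound, it suffices (by Hahn--Banach applied to the graph of $\overline\partial$, i.e.\ H\"ormander's duality argument) to establish the a priori inequality
\begin{equation*}
\big|\big\langle\,g,\beta\,\big\rangle\big|^2\leq\Big(\int_M|g|^2_{\Omega,\,\theta}\,\Omega^n\Big)\,\|\overline\partial^*\beta\|^2
\end{equation*}
for every $\beta$ in $\Dom(\overline\partial)\cap\Dom(\overline\partial^*)$; the solution $u$ then produced automatically satisfies $\int_M|u|^2_\Omega\,\Omega^n\le\int_M|g|^2_{\Omega,\theta}\,\Omega^n$.

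The analytic heart is the Bochner--Kodaira--Nakano identity on the K\"ahler manifold $(M,\Omega)$, which expresses the $\overline\partial$-Laplacian as the $\partial$-Laplacian plus the curvature operator $[i\Theta(L),\Lambda]$, and which for compactly supported smooth $(n,1)$-forms $\beta$ with values in $L$ yields
\begin{equation*}
\|\overline\partial\beta\|^2+\|\overline\partial^*\beta\|^2\geq\int_M\big\langle\,[i\Theta(L),\Lambda]\beta,\beta\,\big\rangle\,\Omega^n\,.
\end{equation*}
Because $\beta$ has top holomorphic degree, the curvature term equals the quantity $\langle\theta\wedge\Lambda_\Omega\beta,\beta\rangle$ with $\theta=c_1(L,h)$, which is precisely the expression entering the definition of $|g|_{\Omega,\theta}$ recorded before the statement. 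Thus Cauchy--Schwarz in the form of that definition gives $|\langle g,\beta\rangle|^2\le(\int|g|^2_{\Omega,\theta}\,\Omega^n)\,\langle[i\Theta(L),\Lambda]\beta,\beta\rangle$, and the lower bound above converts the right-hand side into $\|\overline\partial\beta\|^2+\|\overline\partial^*\beta\|^2$. To isolate $\|\overline\partial^*\beta\|^2$ I would decompose $\beta=\beta_1+\beta_2$ with $\beta_1\in\overline{\ker\overline\partial}$ and $\beta_2\perp\ker\overline\partial$: since $g$ is closed, $\langle g,\beta\rangle=\langle g,\beta_1\rangle$, while $\overline\partial\beta_1=0$ and $\overline\partial^*\beta=\overline\partial^*\beta_1$, so the $\|\overline\partial\beta\|^2$ term drops out and the desired a priori inequality follows.

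For this to apply to the possibly non-compactly-supported forms in the domains of the closed operators $\overline\partial$ and $\overline\partial^*$, I would invoke the completeness hypothesis: by the Andreotti--Vesentini--Gaffney technique one cuts off with functions $\chi_\nu$ built from the potential of a complete K\"ahler metric, so that $\overline\partial\chi_\nu\to0$ uniformly, and concludes that $\mathscr{D}^{n,1}(M,L)$ is dense in $\Dom(\overline\partial)\cap\Dom(\overline\partial^*)$ in the graph norm. This density extends the a priori inequality from smooth compactly supported $\beta$ to all admissible $\beta$ and legitimizes the integration by parts underlying the Bochner--Kodaira identity on the non-compact $M$. Here the metric-independence of $|\cdot|_\Omega$ and $|\cdot|_{\Omega,\theta}$ for $(n,0)$- and $(n,1)$-forms, noted before the statement, guarantees that the final bound does not see the auxiliary complete metric used in the cutoff.

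The remaining and genuinely delicate point is that $h$ is only a singular metric, so $\theta=c_1(L,h)$ is a current and the complex Laplacians are not a priori defined. I would regularize, writing the weight locally as $\psi=-\tfrac12\log h$ and replacing $h$ by smooth decreasing approximations $h_\varepsilon$ with $c_1(L,h_\varepsilon)\ge-\varepsilon\,\Omega$, solve $\overline\partial u_\varepsilon=g$ with the estimate against $|g|^2_{\Omega,\theta_\varepsilon}$, and pass to the limit. \textbf{This limiting step is where I expect the main difficulty to lie}: one must control the twisted norms $|g|^2_{\Omega,\theta_\varepsilon}$ by $|g|^2_{\Omega,\theta}$ uniformly (using that only the absolutely continuous part enters the weight, together with monotone or dominated convergence), extract a weak-$L^2$ limit $u$ of $\{u_\varepsilon\}$ whose norm is bounded by the $\liminf$ of the estimates, and verify $\overline\partial u=g$ in the limit. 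The semipositivity $\theta\ge0$ combined with the fact that the weight depends only on $\theta_{abs}$ is exactly what keeps these bounds finite and makes the passage to the limit succeed.
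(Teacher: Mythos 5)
You should first note a structural point: the paper does not prove Theorem \ref{l2est} at all --- it is imported verbatim from Demailly \cite{D82} (Th\'eor\`eme 5.1), and the surrounding remarks on $|\cdot|_{\Omega,\theta}$ are likewise citations; so your attempt can only be measured against Demailly's original argument, not against anything in this paper. Your skeleton for the \emph{smooth} semipositive case is indeed that argument: H\"ormander's duality reduction to an a priori inequality, the Bochner--Kodaira--Nakano inequality, the identification of the curvature term in bidegree $(n,1)$ with $\langle\theta\wedge\Lambda_\Omega\beta,\beta\rangle$, the orthogonal decomposition along $\ker\overline\partial$, and the Andreotti--Vesentini--Gaffney cutoff argument. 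One imprecision: completeness is assumed only for \emph{some} auxiliary K\"ahler metric $\widehat\omega$, not for $\Omega$, and the cutoff functions control error terms only in the $\widehat\omega$-norm; the correct implementation runs the whole estimate for the complete metrics $\Omega_\varepsilon=\Omega+\varepsilon\widehat\omega$, then lets $\varepsilon\to0$ using exactly the top-degree metric-independence of $|u|^2_\Omega\,\Omega^n$ and $|g|^2_{\Omega,\theta}\,\Omega^n$ that you quote. This is fixable and standard.

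The genuine gap is your treatment of the singular metric, which is the actual content of the theorem. You posit global smooth approximations $h_\varepsilon$ with $c_1(L,h_\varepsilon)\geq-\varepsilon\,\Omega$; such approximations need not exist. On a compact $M$ (where every K\"ahler metric is complete, so the theorem applies), the existence of smooth metrics with curvature $\geq-\varepsilon\,\Omega$ for every $\varepsilon>0$ is precisely the statement that $L$ is nef, whereas the hypothesis only provides one singular positively curved metric. Concretely, let $M$ be the blow-up of $\mathbb{P}^2$ at a point, $L=\mathcal{O}(E)$ the bundle of the exceptional divisor, and $h$ its canonical singular metric with $c_1(L,h)=[E]\geq0$: since $E\cdot E=-1$, $L$ is not nef and no $h_\varepsilon$ as above exist, yet $(M,L,h)$ satisfies all hypotheses of the theorem. (Demailly's regularization theorem \cite{D92} quantifies the obstruction: any smooth approximation must lose positivity at least in proportion to the Lelong numbers of $c_1(L,h)$.) So your reduction fails at its first step. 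Two further problems would remain even where the $h_\varepsilon$ exist, and you only partially flag them: $\theta_\varepsilon:=c_1(L,h_\varepsilon)\geq-\varepsilon\,\Omega$ is not semipositive, so the smooth theorem you established does not apply to it (one needs a perturbed estimate with an error term $\varepsilon\|\beta\|^2$); and there is no domination $|g|^2_{\Omega,\theta_\varepsilon}\leq|g|^2_{\Omega,\theta}$, because $\theta_\varepsilon$ need not dominate $\theta_{abs}$, so the uniform bound required to extract a weak limit of the $u_\varepsilon$ is never established --- the inequality runs the wrong way for Fatou. A working proof keeps the singular metric throughout: one regularizes only \emph{locally} (psh weights admit loss-free decreasing regularizations on charts, $\varphi\ast\rho_\nu\downarrow\varphi$), and passes to the limit inside the \emph{a priori inequality} for compactly supported forms, where the directions are favorable --- Fatou on the curvature side, using that $(dd^c\varphi\ast\rho_\nu)(x)\to\theta_{abs}(x)$ almost everywhere by Lebesgue differentiation, and monotone convergence on the weight side --- rather than taking limits of solutions of globally regularized problems. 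That mechanism, which is the heart of \cite{D82}, is absent from your proposal.
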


\begin{Corollary}\label{l2est2}
Let $(M,\Omega)$ be a K\"ahler manifold of dimension $n$ which admits a complete K\"ahler metric. Let $(L,h)$ a  singular Hermitian holomorphic line bundle and let $\lambda:M\to[0,+\infty)$ be a continuous function such that $c_1(L,h)\geq\lambda\Omega$.
Then for any form $g\in L_{n,1}^2(M,L,loc)$ satisfying
\[
{\overline\partial}g=0\,,\quad \int_M\lambda^{-1}|g|^2\,\Omega^n<+\infty
\]
 there exists $u\in L_{n,0}^2(M,L,loc)$ with $\overline\partial u=g$ and
$$\int_M|u|^2\,\Omega^n\leq\int_M\lambda^{-1}|g|^2\,\Omega^n\,.$$
\end{Corollary}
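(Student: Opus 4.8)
The plan is to reduce the statement to Theorem \ref{l2est} applied to $\theta = c_1(L,h)$, by comparing the weighted norm $|\cdot|_{\Omega,\,\theta}$ with the ordinary norm $|\cdot|$ pointwise. Concretely, I would establish the pointwise inequality
\[
|\alpha|^2_{\Omega,\,\theta} \leq \lambda(x)^{-1}\,|\alpha|^2\,,\qquad \alpha\in\Lambda^{n,1}T_x^*X\otimes L_x\,,
\]
at every $x\in M$ with $\lambda(x) > 0$. Integrating this against $\Omega^n$ turns the hypothesis $\int_M\lambda^{-1}|g|^2\,\Omega^n < \infty$ into the hypothesis $\int_M|g|^2_{\Omega,\,\theta}\,\Omega^n<\infty$ of Theorem \ref{l2est}, and the solution $u$ produced there then automatically satisfies the desired estimate.

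The core is the pointwise bound, and its key input is the behaviour of the Lefschetz operators on $(n,1)$-forms. Fix $x$ with $\lambda(x)>0$ and let $\beta\in\Lambda^{n,1}T_x^*X\otimes L_x$. Since for any $(1,1)$-form $\eta$ the product $\eta\wedge\beta$ is of type $(n+1,2)$ and hence vanishes, writing $L_\eta=\eta\wedge(\cdot\,)$ and $\Lambda=\Lambda_\Omega=L_\Omega^*$ we get $\eta\wedge\Lambda_\Omega\beta=[L_\eta,\Lambda]\beta$. For $\eta=\Omega$ the Lefschetz identity $[L_\Omega,\Lambda]=\mathrm{id}$ on $(n,1)$-forms (pointwise linear algebra with respect to $\Omega$) gives $\Omega\wedge\Lambda_\Omega\beta=\beta$, while for any positive $(1,1)$-form $\eta\geq0$ the operator $[L_\eta,\Lambda]$ is positive semidefinite on $(n,1)$-forms. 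Moreover $\theta_{abs}\geq\lambda\Omega$ holds almost everywhere: indeed $\theta-\lambda\Omega\geq0$ as currents and $\lambda\Omega$ is continuous, so the absolutely continuous part of $\theta-\lambda\Omega$, which is exactly $\theta_{abs}-\lambda\Omega$, is a.e.\ positive semidefinite. Applying the positivity above to $\eta=\theta_{abs}-\lambda\Omega\geq0$ yields
\[
\big\langle\,\theta_{abs}\wedge\Lambda_\Omega\beta,\beta\,\big\rangle \geq \lambda(x)\,\big\langle\,\Omega\wedge\Lambda_\Omega\beta,\beta\,\big\rangle = \lambda(x)\,|\beta|^2\,.
\]

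With this the Cauchy--Schwarz inequality gives, for every $\beta$,
\[
\big|\big\langle\,\alpha,\beta\,\big\rangle\big|^2 \leq |\alpha|^2\,|\beta|^2 \leq \lambda(x)^{-1}\,|\alpha|^2\,\big\langle\,\theta_{abs}\wedge\Lambda_\Omega\beta,\beta\,\big\rangle\,,
\]
and since $|\alpha|^2_{\Omega,\,\theta}$ is by definition the smallest constant for which such an inequality holds, we conclude $|\alpha|^2_{\Omega,\,\theta}\leq\lambda(x)^{-1}|\alpha|^2$.

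Finally I would assemble the pieces. On the set $\{\lambda=0\}$ the finiteness of $\int_M\lambda^{-1}|g|^2\,\Omega^n$ forces $g=0$ a.e., hence $|g|^2_{\Omega,\,\theta}=0$ there; combined with the pointwise bound on $\{\lambda>0\}$ this gives $\int_M|g|^2_{\Omega,\,\theta}\,\Omega^n\leq\int_M\lambda^{-1}|g|^2\,\Omega^n<\infty$. Since $\theta=c_1(L,h)\geq\lambda\Omega\geq0$, Theorem \ref{l2est} applies and produces $u\in L^2_{n,0}(M,L,loc)$ with $\overline\partial u=g$ and $\int_M|u|^2_\Omega\,\Omega^n\leq\int_M|g|^2_{\Omega,\,\theta}\,\Omega^n\leq\int_M\lambda^{-1}|g|^2\,\Omega^n$, as required. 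The only genuinely delicate point is the a.e.\ positivity of $\theta_{abs}-\lambda\Omega$ extracted from the Lebesgue decomposition of the positive current $\theta-\lambda\Omega$; the remaining steps are the standard $(n,1)$-form Lefschetz algebra and a direct application of Theorem \ref{l2est}.
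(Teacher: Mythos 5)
Your proposal is correct and takes essentially the same route as the paper: both proofs reduce the statement to Theorem \ref{l2est} via the pointwise bound $|\alpha|^2_{\Omega,\,\theta}\leq\lambda(x)^{-1}|\alpha|^2$, which follows from $\theta_{abs}\geq\lambda\Omega$ almost everywhere. The only difference is that the paper cites this pointwise inequality from Demailly \cite[Remarque 4.2]{D82} (using the conventions $\tfrac10=+\infty$, $0\cdot\infty=0$ to absorb the set $\{\lambda=0\}$ that you treat separately), whereas you verify it directly through the Lefschetz commutator algebra on $(n,1)$-forms and the Lebesgue decomposition of the positive current $\theta-\lambda\Omega$; your verification is a correct, self-contained proof of the cited fact.
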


\begin{proof} We have $c_1(L,h)_{abs}\geq\lambda\Omega$, so by \cite[Remarque 4.2]{D82}, $|\alpha|_{\Omega,\,\theta}^2\leq\lambda(x)^{-1}|\alpha|^2$, for any $\alpha\in\Lambda^{n,1}T_x^*X\otimes L_x$ (with the conventions $\frac10=+\infty$, $0\cdot\infty=0$). Hence the conclusion follows immediately from Theorem \ref{l2est}.
\end{proof}

\begin{Corollary}\label{l2est23}
Let $(M,\Omega)$ be a K\"ahler manifold such that $Ric_{\Omega}\geq0$. Assume that $M$ carries a complete K\"ahler metric. Let $(L,h)$ a  singular Hermitian holomorphic line bundle and let $\lambda:M\to[0,+\infty)$ be a continuous function such that $c_1(L,h)\geq\lambda\Omega$. Then for any form $g\in L_{0,1}^2(M,L,loc)$ satisfying
\[
{\overline\partial}g=0\,,\quad \int_M\lambda^{-1}|g|^2\,\Omega^n<+\infty
\]
 there exists $u\in L_{0,0}^2(M,L,loc)$ with $\overline\partial u=g$ and
$$\int_M|u|^2\,\Omega^n\leq\int_M\lambda^{-1}|g|^2\,\Omega^n\,.$$
\end{Corollary}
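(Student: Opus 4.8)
The plan is to deduce the $(0,1)$-form statement for sections of $L$ from the already-established $(n,1)$-form statement (Corollary \ref{l2est2}) by absorbing the canonical bundle $K_M$ into the line bundle and exploiting the hypothesis $\ric_\Omega\geq0$. The point is that an $L$-valued $(0,1)$-form $g$ can be reinterpreted, via the natural identification $\Lambda^{0,q}T^*_xM\otimes L_x\cong\Lambda^{n,q}T^*_xM\otimes K_M^{-1}\otimes L_x$, as a $(K_M^{-1}\otimes L)$-valued $(n,1)$-form. Under this identification the $\overline\partial$-operator is preserved, so solving $\overline\partial u=g$ in the $(0,0)$-setting with values in $L$ is equivalent to solving $\overline\partial \widetilde u=\widetilde g$ in the $(n,0)$-setting with values in $K_M^{-1}\otimes L$.

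First I would fix the Hermitian metric on $K_M^{-1}$ induced by $\Omega$ (equivalently, by the volume form $\Omega^n$), whose curvature is exactly $\ric_\Omega$; equip $K_M^{-1}\otimes L$ with the tensor-product metric, so that its curvature current is $\ric_\Omega+c_1(L,h)$. The hypothesis $\ric_\Omega\geq0$ then gives
\[
c_1(K_M^{-1}\otimes L,\,\cdot\,)=\ric_\Omega+c_1(L,h)\geq c_1(L,h)\geq\lambda\,\Omega\,,
\]
so the twisted bundle satisfies the curvature lower bound required by Corollary \ref{l2est2}. Next I would check that the canonical isometry between $\Lambda^{0,q}\otimes L$ and $\Lambda^{n,q}\otimes K_M^{-1}\otimes L$ is pointwise norm-preserving with respect to these metrics and $\Omega$; this is the standard fact that contracting against $dV_\Omega=\Omega^n/n!$ identifies the two $L^2$-pairings up to the metric on $K_M^{-1}$ built into the volume form. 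Consequently $|g|^2=|\widetilde g|^2$ pointwise (with the respective metrics), and the two finiteness conditions $\int_M\lambda^{-1}|g|^2\,\Omega^n<+\infty$ and $\int_M\lambda^{-1}|\widetilde g|^2\,\Omega^n<+\infty$ coincide, as do the solution norms $\int_M|u|^2\,\Omega^n=\int_M|\widetilde u|^2\,\Omega^n$.

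With these identifications in place, applying Corollary \ref{l2est2} to the $\overline\partial$-closed $(n,1)$-form $\widetilde g$ with values in $K_M^{-1}\otimes L$ yields $\widetilde u\in L^2_{n,0}(M,K_M^{-1}\otimes L,loc)$ with $\overline\partial\widetilde u=\widetilde g$ and $\int_M|\widetilde u|^2\,\Omega^n\leq\int_M\lambda^{-1}|\widetilde g|^2\,\Omega^n$; transporting $\widetilde u$ back across the isometry produces the desired $u\in L^2_{0,0}(M,L,loc)$ with $\overline\partial u=g$ and the stated estimate. The main obstacle I anticipate is purely bookkeeping rather than analytic: one must verify carefully that the curvature of the metric on $K_M^{-1}$ induced by $\Omega^n$ really equals $\ric_\Omega$ with the sign and normalization used here, and that the norm identification between $(0,q)$- and $(n,q)$-forms is an honest isometry for the chosen volume form, so that no stray Jacobian factor corrupts the two integrals. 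Once those normalizations are pinned down, the completeness hypothesis transfers verbatim (it is a condition on $M$ alone, independent of the bundle), and the conclusion follows directly from Corollary \ref{l2est2}.
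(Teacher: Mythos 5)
Your proposal is correct and is essentially identical to the paper's own proof: the paper also twists by the anticanonical bundle with the metric induced by $\Omega$ (so that $c_1(L\otimes K_M^{*},h\otimes h^{K_M^{*}})=c_1(L,h)+\ric_\Omega\geq\lambda\Omega$), transports $g$ through the natural $\overline\partial$-commuting isometry $\Lambda^{0,q}T^*M\otimes L\cong\Lambda^{n,q}T^*M\otimes(L\otimes K_M^{*})$, and applies Corollary \ref{l2est2}. No discrepancies to report.
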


\begin{proof} We apply Corollary \ref{l2est2} to the line bundle $(F,h^F)=(L\otimes K_X^{*},h\otimes h^{K_X^{*}})$, where $h^{K_X^{*}}$ is the metric induced by $\Omega$. Obviously, $c_1(F,h^F)\geq\lambda\Omega$. There exists a natural isometry
\begin{equation*}
\begin{split}
&\Psi=\thicksim\,:\Lambda^{0,q}(T^*X)\otimes L
\longrightarrow\Lambda^{n,q}(T^*X)\otimes F,\\
&\Psi \, s=\widetilde s=(w^1\wedge\ldots\wedge w^n\wedge s)\otimes
(w_1\wedge\ldots\wedge w_n),
\end{split}
\end{equation*}
where $\{w_j\}^n_{j=1}$ is a local holomorphic frame of $T^{(1,0)}X$ and $\{w^j\}^n_{j=1}$ is the dual frame. The operator $\Psi$ commutes with the action of $\overline\partial$. For a form $g\in L_{0,1}^2(M,L,loc)$ with $\overline\partial g=0$, $\int_M\lambda^{-1}|g|^2\,\Omega^n<+\infty$ we have $\widetilde{g}\in L_{n,1}^2(M,L,loc)$, $\overline\partial\widetilde{g}=0$ and $\int_M\lambda^{-1}|\widetilde{g}|^2\,\Omega^n<+\infty$. By Corollary \ref{l2est2}, there exists $\widetilde{u}\in L_{n,0}^2(M,L,loc)$ with $\overline\partial\widetilde{u}=\widetilde{g}$ and
$\int_M|\widetilde{u}|^2\,\Omega^n\leq\int_M\lambda^{-1}|\widetilde{g}|^2\,\Omega^n$. Then $u=\Psi^{-1}\widetilde{u}$ satisfies the conclusion.
\end{proof}

\medskip

\subsection{Proofs of Theorems \ref{T:mt1}, \ref{T:mt2}, \ref{T:mt3}}\label{SS:pfmt}

\par Theorem \ref{T:mt1} will follow from Lemmas \ref{L:Sext}, \ref{L:symp}, and from:

\begin{Theorem}\label{T:Pp} In the setting of Theorem \ref{T:mt1}, we  have that $\frac{1}{p}\,\log P_p\to0$ as $p\to\infty$, in $L_{loc}^1(G\cap X_{reg}^{orb},\Omega^n)$.
\end{Theorem}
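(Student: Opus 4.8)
The plan is to establish the two-sided bound $\frac{1}{p}\log P_p \to 0$ in $L^1_{loc}(G\cap X^{orb}_{reg},\Omega^n)$ by proving an upper and a lower estimate separately, working on the line bundle $L^p\otimes K_{\mathcal X}$ restricted to the honest complex manifold $M:=X^{orb}_{reg}$, where the orbifold bundle becomes an ordinary singular Hermitian line bundle (Remark \ref{R:secnorm}) and the orbifold Fubini--Study current and Bergman kernel coincide with the classical objects via the maps $\boldsymbol F,\boldsymbol G$ of Propositions \ref{P:pqc} and \ref{P:G}. Throughout I would use the variational formula \eqref{e:mBK}, namely $P_p(x)=\max\{|S|^2_{\widetilde h^p\otimes \widetilde h^{K_{\mathcal X}}}(x):\|S\|=1\}$, and the extension Lemma \ref{L:Sext}, which guarantees that any square-integrable holomorphic section defined on $M$ automatically extends across $X^{orb}_{sing}$, so it genuinely represents an element of $H^0_{(2)}({\mathcal X},L^p\otimes K_{\mathcal X})$.

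\textbf{Upper bound.} First I would prove $\limsup_p \frac{1}{p}\log P_p \le 0$, in fact locally uniformly on $M$ (which yields the $L^1_{loc}$ upper bound). The standard mechanism is a submean-value estimate: fixing a local holomorphic frame and writing $P_p\circ\phi_i = e^{-2p\psi_i - 2\psi^{K}_i}\sum_j|s_{j,i}|^2$, one bounds the pointwise value of a unit-norm section at a center by its $L^2$-mass over a small coordinate ball, using the plurisubharmonicity of $\log|s|^2$ together with the fact that the weight $p\psi_i$ is locally bounded above (semipositivity gives the needed one-sided control). Since the total mass is $\|S\|^2=1$, this produces $\frac{1}{p}\log P_p(x)\le \frac{C}{p}+ (\text{terms from the weight that vanish after dividing by }p)$, giving the clean $\limsup\le 0$. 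This half does not use the $\overline\partial$-estimates and does not use strict positivity.

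\textbf{Lower bound.} The heart of the argument is $\liminf_p \frac{1}{p}\log P_p(x)\ge 0$ for each $x\in G\cap X^{orb}_{reg}$, and here the strict positivity $c_1(L,\widetilde h)>\varepsilon\,\Omega$ and the complete-K\"ahler hypothesis enter through Corollary \ref{l2est} (the $(n,1)$-case, which is why we use $L^p\otimes K_{\mathcal X}$). The strategy is the standard Hörmander--Demailly peak-section construction: fix $x$, choose local coordinates, and build a smooth $L^p\otimes K_{\mathcal X}$-valued $(n,0)$-form that is a sharply concentrated bump of controlled $L^2$-norm at $x$ (a local holomorphic peak form cut off by a smooth plateau function $\chi$). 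The form $g=\overline\partial(\chi\cdot\text{peak})$ is then a $\overline\partial$-closed $(n,1)$-form supported away from $x$; on the complete K\"ahler manifold $M$, since $c_1(L^p\otimes K_{\mathcal X},\widetilde h^p\otimes \widetilde h^{K}) \ge p\,\varepsilon\,\Omega$ on a neighborhood, Corollary \ref{l2est} (applied after absorbing the curvature lower bound, cf.\ Corollary \ref{l2est2}) produces a solution $u$ of $\overline\partial u = g$ with $\int_M |u|^2 \le \frac{C}{p\varepsilon}\|g\|^2$. Subtracting, $S:=\chi\cdot\text{peak}-u$ is a genuine holomorphic section in $H^0_{(2)}$ whose value at $x$ is essentially unchanged (because $u$ is small in $L^2$ and hence, by the submean-value estimate again, small pointwise at $x$), and normalizing gives $P_p(x)\ge |S|^2/\|S\|^2 \ge c\,e^{2p\,u(x)}$ for the local potential $u=u_x(c_1(L,\widetilde h))$ from Lemma \ref{L:symp}. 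Dividing by $p$ and letting $p\to\infty$ yields $\liminf \frac{1}{p}\log P_p(x)\ge 0$.

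\textbf{Main obstacle.} I expect the delicate point to be the lower bound: one must track how the $L^2$-bound on $u$ from Corollary \ref{l2est} translates into a \emph{pointwise} bound on $u(x)$ near the peak, and verify that the peak form can be chosen with $L^2$-norm and curvature-normalized norm both comparable to $1$ \emph{uniformly} in $p$ after the correct rescaling by $\sqrt p$, so that the $\frac{1}{p\varepsilon}$ factor in the error genuinely dominates the construction and the correction $u$ does not destroy the peak. A secondary technical care is that strict positivity and the continuity of $\varepsilon$ are only assumed on $G\cap X^{orb}_{reg}$, so the peak construction must be localized to stay inside this set, and one must confirm the extension across $X^{orb}_{sing}$ (Lemma \ref{L:Sext}) does not interfere—but since the convergence is only claimed on $G\cap X^{orb}_{reg}$ and in $L^1_{loc}$ there, choosing $x$ and its peak neighborhood within the regular locus resolves this cleanly. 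Finally, combining the locally uniform upper bound with the pointwise (hence, via a standard Fatou/dominated-convergence argument using the uniform upper bound as an integrable majorant) lower bound gives convergence in $L^1_{loc}(G\cap X^{orb}_{reg},\Omega^n)$, as asserted.
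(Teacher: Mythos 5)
Your overall architecture coincides with the paper's: reduce to the complete K\"ahler manifold $X'=X^{orb}_{reg}$ via Lemma \ref{L:Sext}, prove a submean-value upper bound, and prove a lower bound by an H\"ormander--Demailly $\overline\partial$-construction for $(n,1)$-forms using Corollary \ref{l2est2}. However, your lower bound has a genuine gap, traceable to the fact that $\widetilde h$ is \emph{singular}: the local weight $\psi_\alpha$ is merely psh, possibly $-\infty$ on a polar set. (a) Your ``local holomorphic peak form'' cannot be built by hand or by $\sqrt p$-rescaling: what is needed is a holomorphic $v_{z,p}$ on a fixed ball $V$ satisfying $\int_V|v_{z,p}|^2e^{-2p\psi_\alpha}\Omega^n\leq C'|v_{z,p}(z)|^2e^{-2p\psi_\alpha(z)}$ with $C'$ independent of $p$ and of the singular weight; Gaussian-concentration heuristics apply only to smooth metrics, and the paper obtains this estimate (its \eqref{e:ohst}) from the Ohsawa--Takegoshi extension theorem \cite{OT87}, which your plan never invokes. (b) Your treatment of the correction $u$ fails: you solve $\overline\partial u=g$ with the plain weight $h^p$ and argue that $u(x)$ is ``essentially negligible'' because $u$ is small in $L^2$ and holomorphic near $x$. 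But converting the \emph{weighted} $L^2$ bound into a pointwise bound at $x$ via the submean-value inequality costs a factor $e^{2p(\max_{B(x,r)}\psi_\alpha-\psi_\alpha(x))}$, which for singular $\psi_\alpha$ can grow exponentially in $p$, destroying the argument. The paper avoids this entirely by solving $\overline\partial$ with the auxiliary weight $e^{-\varphi_z}$ of \eqref{e:weight}, which has a logarithmic pole of order $2n$ at $z$: the non-integrability of $e^{-\varphi_z}$ forces $u_{z,p}(z)=0$ exactly, so $S_{z,p}(z)=v_{z,p}(z)\,e_\alpha^{\otimes p}\otimes e'_\alpha(z)$ and one gets the uniform bound \eqref{e:sp}, following \cite{D93b}. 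Both ingredients are missing from your proposal.

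A secondary but real error is in your upper bound: it is \emph{not} locally uniform, and the weight terms do \emph{not} ``vanish after dividing by $p$''. The submean-value argument gives precisely \eqref{e:Bke}, namely $\frac1p\log P_p(z)\leq\frac{\log(Cr^{-2n})}{p}+2\bigl(\max_{B(z,r)}\psi_\alpha-\psi_\alpha(z)\bigr)$, and the oscillation term is independent of $p$: it tends to $0$ only as $r\to0$, pointwise on $\{\psi_\alpha>-\infty\}$ and in $L^1_{loc}$ (this is the argument of \cite[Theorem 5.1]{CM11} that the paper cites). Thus in the correct sandwich it is the \emph{lower} bound that is uniform ($-\frac{\log C_1}{p}$) and the \emph{upper} bound that is merely pointwise/$L^1$ --- the opposite of what your concluding paragraph asserts when it proposes to use the ``locally uniform upper bound'' as an integrable majorant. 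The $L^1_{loc}$ conclusion can still be reached, but only by the two-step limit ($p\to\infty$ first, then $r\to0$) applied to \eqref{e:Bke}, not by the dominated-convergence scheme you describe.
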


\begin{proof} Let $X'=X_{reg}^{orb}$ and recall that $L\mid_{X'}$ is a holomorphic line bundle with a metric $h$ induced by $\widetilde h$. We denote by $h_p$ the metric induced by $h$ and $h^{K_{X'}}$ on $(L\mid_{X'})^p\otimes K_{X'}$. By Lemma \ref{L:Sext}, $P_p$ is the Bergman kernel function of the space $H_{(2)}^0(X',(L\mid_{X'})^p\otimes K_{X'})=H_{(2)}^0({\mathcal X},L^p\otimes K_{\mathcal X})$, where the norm is denoted by $\|\cdot\|$.

\par We let $x\in G\cap X'$ and $U_\alpha\subset G\cap X'$ be a coordinate neighborhood of $x$ on which there exists a holomorphic frame $e_\alpha$ of $L\mid_{X'}$ and $e'_\alpha$ of $K_{X'}$. Let $\psi_\alpha$ be a psh weight of $h$ and $\rho_\alpha$ be a smooth weight of $h^{K_{X'}}$ on $U_\alpha$. Fix $r_0>0$ so that the ball $V:=B(x,2r_0)\Subset U_\alpha$ and let $U:=B(x,r_0)$.

\par Following the arguments of \cite{D92,CM11} we will show that there exist constants $C>0$, $p_0\in\mathbb{N}$ so that
\begin{equation}\label{e:Bke}
-\frac{\log C}{p}\leq\frac{1}{p}\,\log P_p(z)\leq\frac{\log(Cr^{-2n})}{p}+2\left(\max_{B(z,r)}\psi_\alpha-\psi_\alpha(z)\right)
\end{equation}
holds for all $p>p_0$, $0<r<r_0$ and $z\in U$ with $\psi_\alpha(z)>-\infty$. By \eqref{e:Bke} it follows that $\frac{1}{p}\,\log P_p\to0$ in $L^1(U,\Omega^n)$, as in \cite[Theorem 5.1]{CM11}.

\smallskip

\par For the upper estimate, fix $z\in U$ with $\psi_\alpha(z)>-\infty$ and $r<r_0$. Let $S\in H_{(2)}^0(X',(L\mid_{X'})^p\otimes K_{X'})$ with $\|S\|=1$ and write $S=s\,e_\alpha^{\otimes p}\otimes e'_\alpha$. Then
\begin{eqnarray*}
|S(z)|^2_{h_p}&=&|s(z)|^2e^{-2p\psi_\alpha(z)-2\rho_\alpha(z)}\leq e^{-2p\psi_\alpha(z)}\frac{C_1}{r^{2n}}\,\int_{B(z,r)}|s|^2\,\Omega^n\\
&\leq&\frac{C_2}{r^{2n}}\,\exp\left(2p\left(\max_{B(z,r)}\psi_\alpha-\psi_\alpha(z)\right)\right)\int_{B(z,r)}|s|^2e^{-2p\psi_\alpha-2\rho_\alpha}\,\Omega^n\\
&\leq&\frac{C_2}{r^{2n}}\,\exp\left(2p\left(\max_{B(z,r)}\psi_\alpha-\psi_\alpha(z)\right)\right),
\end{eqnarray*}
where $C_2$ is a constant that depends only on $x$. Hence by Lemma \ref{L:Bergfcn}
$$\frac{1}{p}\,\log P_p(z)=\frac{1}{p}\,\max_{\|S\|=1}\log |S(z)|^2_{h_p}\leq\frac{\log(C_2r^{-2n})}{p}+2\left(\max_{B(z,r)}\psi_\alpha-\psi_\alpha(z)\right).$$

\smallskip

\par We prove next the lower estimate from \eqref{e:Bke}. We proceed like in the proof of \cite[Theorem 5.1]{CM11} by using an argument of \cite[Section 9]{D93b} to show that there exist a constant $C_1>0$ and $p_0\in\mathbb{N}$ such that for all $p>p_0$ and all $z\in U$ with $\psi_\alpha(z)>-\infty$ there is a section $S_{z,p}\in H_{(2)}^0(X',(L\mid_{X'})^p\otimes K_{X'})=H_{(2)}^0({\mathcal X},L^p\otimes K_{\mathcal X})$ with $S_{z,p}(z)\neq0$ and
\begin{equation}\label{e:sp}
\|S_{z,p}\|^2\leq C_1|S_{z,p}(z)|^2_{h_p}\,.
\end{equation}
Observe that \eqref{e:mBK} and \eqref{e:sp} yield the desired lower estimate
\[
\frac{1}{p}\,\log P_p(z)=\frac{1}{p}\,\max_{\|S\|=1}\log|S(z)|^2_{h_p}\geq-\frac{\log C_1}{p}\,\cdot
\]
\par Let us prove the existence of $S_{z,p}$ as above. By the Ohsawa-Takegoshi extension theorem \cite{OT87} there exists a constant $C'>0$ (depending only on $x$) such that for any $z\in U$ and any $p$ there exists a function $v_{z,p}\in{\mathcal O}(V)$ with $v_{z,p}(z)\neq0$ and
\begin{equation}\label{e:ohst}
\int_V|v_{z,p}|^2e^{-2p\psi_\alpha}\Omega^n\leq C'|v_{z,p}(z)|^2e^{-2p\psi_\alpha(z)}\,.
\end{equation}

\par We shall now solve the $\overline\partial$-equation with $L^2$-estimates in order to extend $v_{z,p}$ to a section of $(L\mid_{X'})^p\otimes K_{X'}$ over $X'$. Let $\theta\in\mathcal C^\infty(\mathbb R)$ be a cut-off function such that $0\leq\theta\leq1$, $\theta(t)=1$ for $|t|\leq\frac12$, $\theta(t)=0$ for $|t|\geq1$.
Define the quasi-psh function $\varphi_z$ on $X'$ by
\begin{equation}\label{e:weight}
\varphi_z(y)=\begin{cases}2n\theta\big(\tfrac{|y-z|}{r_0}\big)\log\frac{|y-z|}{r_0}\,,\quad\text{for $y\in U_\alpha$}\,,\\
0,\quad\text{for $y\in X'\setminus B(z,r_0)$}\,.
\end{cases}
\end{equation}
Then there exists $C>0$ such that $dd^c\varphi_z\geq -C\Omega$ on $X'$ for all $z\in U$.
Since the function $\varepsilon>0$ is continuous, there exists a constant $a'>0$ such that \[c_1(L\mid_{X'},h)\geq a'\Omega \quad\text{on a neighborhoof of $\overline{V}$}.\] Therefore there exist $a>0$, $p_0\in\mathbb{N}$ such that for all $p\geq p_0$ and all $z\in U$
\[
\begin{split}
&c_1((L\mid_{X'})^p,h^pe^{-\varphi_z})\geq0\;\;\text{on $X'$}\,,\\ &c_1((L\mid_{X'})^p,h^pe^{-\varphi_z})\geq ap\,\Omega\;\;\text{on a neighborhoof of $\overline{V}$}.
\end{split}
\]
Let $\lambda:X'\to[0,+\infty)$ be a continuous function such that $\lambda=ap$ on $\overline{V}$ and  \[c_1((L\mid_{X'})^p,h^pe^{-\varphi_z})\geq\lambda\Omega\,.\]
Consider the form $$g_{z,p}\in L^2_{n,1}(X',(L\mid_{X'})^p),\;g_{z,p}=\overline\partial\big(v_{z,p}\,\theta\big(\tfrac{|y-z|}{r_0}\big)e_\alpha^{\otimes p}\otimes e'_\alpha\big).$$
For simplicity, let $h_p$ also denote the metric induced on $(L\mid_{X'})^p\otimes\Lambda^{n,1}(T^*X')$ by $h$ and $\Omega$. Then
\[\int_{X'}\frac{1}{\lambda}\,|g_{z,p}|^2_{h_p}e^{-\varphi_z}\Omega^n=\int_{V}\frac{1}{\lambda}\,|g_{z,p}|^2_{h_p}e^{-\varphi_z}\Omega^n=\frac{1}{ap}\int_V|g_{z,p}|^2_{h_p}e^{-\varphi_z}\Omega^n<+\infty\,.\]
Note that the integral at the right is finite by \eqref{e:ohst}, since $\psi_\alpha(z)>-\infty$ and
$$\int_{V}|g_{z,p}|^2_{h_p}e^{-\varphi_z}\Omega^n\leq C'''\int_V|v_{z,p}|^2|\overline\partial\theta(\tfrac{|y-z|}{r_0})|^{2}e^{-2p\psi_\alpha}e^{-\varphi_z}\Omega^n\leq C''\int_V|v_{z,p}|^2e^{-2p\psi_\alpha}\Omega^n,$$
where $C''',C''>0$ are constants that depend only on $x$.

\par By the hypotheses of Theorem \ref{T:mt1}, $X'$ carries a complete K\"ahler metric. Thus, the hypotheses of Corollary \ref{l2est2} are satisfied for the K\"ahler manifold $(X',\Omega)$, the semipositive line bundle $((L\mid_{X'})^p,h^pe^{-\varphi_z})$ and the form $g_{z,p}$\,, for all $p\geq p_0$ and $z\in U$. So there exists $u_{z,p}\in L^2_{n,0}(X',(L\mid_{X'})^p)$ such that $\overline\partial u_{z,p} =g_{z,p}$ and
\begin{equation}\label{e:cs1}
\int_{X'}|u_{z,p}|^2_{h_p}e^{-\varphi_z}\,\Omega^n\leq\int_{X'}\frac{1}{\lambda}\,|g_{z,p}|^2_{h_p}e^{-\varphi_z}\Omega^n\leq\frac{C''}{ap}\int_V|v_{z,p}|^2e^{-2p\psi_\alpha}\Omega^n\,.
\end{equation}
Near $z$, $e^{-\varphi_z(y)}=r_0^{2n}|y-z|^{-2n}$ is not integrable, thus \eqref{e:cs1} implies that $u_{z,p}(z)=0$. Define
$$S_{z,p}:=v_{z,p}\,\theta\big(\tfrac{|y-z|}{r_0}\big)e_\alpha^{\otimes p}\otimes e'_\alpha-u_{z,p}\,.$$
Then $\overline\partial S_{z,p}=0$, $S_{z,p}(z)=v_{z,p}(z)e_\alpha^{\otimes p}\otimes e'_\alpha(z)\neq0$, $S_{z,p}\in H^0_{(2)}(X',(L\mid_{X'})^p\otimes K_{X'})$. Since $\varphi_z\leq0$ on $X'$, we have by \eqref{e:cs1} and \eqref{e:ohst}
\begin{eqnarray*}
\|S_{z,p}\|^2=\int_{X'}|S_{z,p}|^2_{h_p}\,\Omega^n&\leq&2\left(C'''\int_V|v_{z,p}|^2e^{-2p\psi_\alpha}\Omega^n+\int_{X'}|u_{z,p}|^2_{h_p}e^{-\varphi_z}\,\Omega^n\right)\\
&\leq&2C'\left(C'''+\frac{C''}{ap}\right)|v_{z,p}(z)|^2e^{-2p\psi_\alpha(z)}\leq C_1|S_{z,p}(z)|^2_{h_p},
\end{eqnarray*}
with a constant $C_1>0$ that depends only on $x$. This concludes the proof of \eqref{e:sp}.
\end{proof}

\medskip

\begin{proof}[Proof of Theorem \ref{T:mt1}] Let us write $\gamma_p=\{\widetilde\gamma_{p,i}\}$, where $\widetilde\gamma_{p,i}$ is the corresponding Fubini-Study current on $\widetilde U_i$ defined as in \eqref{e:FSc}. We fix $x\in G$ and let $(\widetilde U_i,\Gamma_i,\phi_i)$ be an orbifold chart so that $x\in U_i\subset G$ and $L_{\widetilde U_i}$, $K_{\widetilde U_i}$ have holomorphic frames on $\widetilde U_i$. Set $A_i=U_i\cap X_{sing}^{orb}$ and $\widetilde A_i=\phi_i^{-1}(A_i)$.

\smallskip

\par $(i)$ Lemma \ref{L:symp} (and its proof) shows that there exist psh functions $v_p,\,u$ and a continuous function $\rho$ on $U_i$ so that $\rho\circ\phi_i$ is smooth, 
$$\widetilde\gamma_{p,i}=dd^c(v_p\circ\phi_i)\,,\;\;c_1(L_{\widetilde U_i},\widetilde h_i)=dd^c(u\circ\phi_i)\,,\;\;c_1(K_{\widetilde U_i},\widetilde h_i^{K_{\widetilde U_i}})=dd^c(\rho\circ\phi_i)\,,$$
and $2v_p-2(pu+\rho)=\log P_p$\,. Hence on $\widetilde U_i$,
$$\frac{1}{p}\,v_p\circ\phi_i-u\circ\phi_i=\frac{1}{2p}\,\log P_p\circ\phi_i+\frac{1}{p}\,\rho\circ\phi_i\,.$$

\par By Theorem \ref{T:Pp} we have $\frac{1}{p}\,\log P_p\circ\phi_i\to0$, so $\frac{1}{p}\,v_p\circ\phi_i\to u\circ\phi_i$, in $L^1_{loc}(\widetilde U_i\setminus\widetilde A_i)$. It follows that the sequence $\{\frac{1}{p}\,v_p\circ\phi_i\}$ is locally uniformly upper bounded in $\widetilde U_i\setminus\widetilde A_i$.

\par If $y\in\widetilde A_i$ we may assume that there exist coordinates $z$ on some neighborhood $V\subset\widetilde U_i$ of $y=0$ so that $V\cap\widetilde A_i$ is contained in the cone $\{|z_n|\leq\max(|z_1|,\dots,|z_{n-1}|)\}$. Applying the maximum principle on complex lines parallel to the $z_n$ axis, we see that there exist a neighborhood $V_1\subset V$ of $y$ and a compact set $K\subset V\setminus\widetilde A_i$ so that $\sup_{V_1}v_p\circ\phi_i\leq\sup_Kv_p\circ\phi_i$. Hence $\{\frac{1}{p}\,v_p\circ\phi_i\}$ is uniformly upper bounded on $V_1$.

\par We conclude that the sequence $\{\frac{1}{p}\,v_p\circ\phi_i\}$ is locally uniformly upper bounded in $\widetilde U_i$. Hence it is relatively compact in $L^1_{loc}(\widetilde U_i)$ and it converges to $u\circ\phi_i$ in $L^1_{loc}(\widetilde U_i)$, since it does so outside $\widetilde A_i$ (see \cite[Theorem 3.2.12]{Ho}). This implies that $\frac{1}{p}\,\widetilde\gamma_{p,i}\to c_1(L_{\widetilde U_i},\widetilde h_i)$ weakly on $\widetilde U_i$, which yields $(i)$.

\smallskip

$(ii)$ The proof of $(i)$ implies that $\frac{1}{p}\,\log P_p\circ\phi_i\to0$ in $L^1_{loc}(\widetilde U_i)$.
We may assume that there is an embedding $U_i\hookrightarrow{\mathbb C}^N$. If  $K\Subset U_i$ and $z$ denote the coordinates on ${\mathbb C}^N$ then
$$\int_K|\log P_p|\,(dd^c\|z\|^2)^n=\frac{1}{m_i}\,\int_{\phi_i^{-1}(K)}|\log P_p\circ\phi_i|\,(dd^c\|\phi_i\|^2)^n,$$
where $m_i=|\Gamma_i|$. Since $(dd^c\|\phi_i\|^2)^n$ is smooth, it follows that $\frac{1}{p}\,\log P_p\to0$ in $L^1_{loc}(U_i)$ with respect to the area measure of $X$.
\end{proof}

\par To prove Theorem \ref{T:mt2}, we will need the following theorem:

\begin{Theorem}\label{T:Pp1} In the setting of Theorem \ref{T:mt2}, we have that $\frac{1}{p}\,\log P_p\to0$ as $p\to\infty$, in $L_{loc}^1(G\cap X_{reg}^{orb},\Omega^n)$\,.
\end{Theorem}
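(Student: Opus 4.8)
The plan is to follow the proof of Theorem \ref{T:Pp} essentially verbatim, the only structural change being that we now work with sections of $(L\mid_{X'})^p$ rather than of $(L\mid_{X'})^p\otimes K_{X'}$, and accordingly solve the $\overline\partial$-equation on $(0,1)$-forms using Corollary \ref{l2est23} in place of the $(n,1)$-estimate of Corollary \ref{l2est2}. This is exactly where the extra hypotheses of Theorem \ref{T:mt2} enter: on $X':=X_{reg}^{orb}$ the charts $\phi_i$ are local biholomorphisms, so $\widetilde\Omega$ K\"ahler with $Ric_{\widetilde\Omega}\geq0$ descends to $\Omega$ K\"ahler with $Ric_\Omega\geq0$ on $X'$, while $(X',\Omega)$ still admits a complete K\"ahler metric by assumption; hence the hypotheses of Corollary \ref{l2est23} are met. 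By Lemma \ref{L:Sext}, $P_p$ is the Bergman kernel function of $H^0_{(2)}(X',(L\mid_{X'})^p)=H^0_{(2)}(\mathcal X,L^p)$, and the goal is again to establish a two-sided estimate of the type \eqref{e:Bke} on a small ball $U$ around each $x\in G\cap X'$, from which the $L^1_{loc}$ convergence follows as in \cite[Theorem 5.1]{CM11}.

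For the upper bound I would argue as before: writing a unit-norm section $S=s\,e_\alpha^{\otimes p}$ over a coordinate chart $U_\alpha$ with psh weight $\psi_\alpha$ of $h$, the sub-mean value inequality for the plurisubharmonic function $|s|^2$ together with $\|S\|^2=\int|s|^2e^{-2p\psi_\alpha}\Omega^n=1$ gives
\[
|S(z)|^2_{h_p}\leq\frac{C}{r^{2n}}\exp\!\Big(2p\big(\max_{B(z,r)}\psi_\alpha-\psi_\alpha(z)\big)\Big),
\]
and the extremal characterization \eqref{e:mBK} yields the upper half of \eqref{e:Bke}. The absence of the $K_{X'}$-factor only alters the bookkeeping of smooth weights and is absorbed into constants.

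The lower bound again reduces to producing, for each $z\in U$ with $\psi_\alpha(z)>-\infty$ and each large $p$, a peak section $S_{z,p}\in H^0_{(2)}(X',(L\mid_{X'})^p)$ with $S_{z,p}(z)\neq0$ satisfying the analogue of \eqref{e:sp}. I would obtain a local holomorphic seed $v_{z,p}$ from the Ohsawa-Takegoshi theorem \cite{OT87} as in \eqref{e:ohst}, cut it off with $\theta(|y-z|/r_0)$, and correct the resulting $(0,1)$-form
\[
g_{z,p}=\overline\partial\big(v_{z,p}\,\theta(\tfrac{|y-z|}{r_0})\,e_\alpha^{\otimes p}\big)\in L^2_{0,1}(X',(L\mid_{X'})^p)
\]
by solving $\overline\partial u_{z,p}=g_{z,p}$. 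Twisting the metric by the singular weight $e^{-\varphi_z}$ of \eqref{e:weight} makes the curvature $\geq ap\,\Omega$ near $\overline V$ and $\geq0$ globally, so Corollary \ref{l2est23} (which treats $(0,1)$-forms and is where $Ric_\Omega\geq0$ is indispensable) provides a solution $u_{z,p}$ with the $L^2$ bound \eqref{e:cs1}; non-integrability of $e^{-\varphi_z}$ near $z$ forces $u_{z,p}(z)=0$. Setting $S_{z,p}=v_{z,p}\,\theta(\tfrac{|y-z|}{r_0})\,e_\alpha^{\otimes p}-u_{z,p}$ gives a global holomorphic section with $S_{z,p}(z)=v_{z,p}(z)e_\alpha^{\otimes p}(z)\neq0$, and combining \eqref{e:cs1} with \eqref{e:ohst} yields $\|S_{z,p}\|^2\leq C_1|S_{z,p}(z)|^2_{h_p}$, hence the lower half of \eqref{e:Bke} via \eqref{e:mBK}.

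The only genuinely new point — and the one I would single out as the crux — is the passage from the $(n,1)$ to the $(0,1)$ solvability of $\overline\partial$: without the $K_{X'}$-twist the relevant curvature term picks up a Ricci contribution through the natural isometry $\Psi$ of Corollary \ref{l2est23}, which converts the problem into the already-controlled $(n,1)$-estimate for $F=(L\mid_{X'})^p\otimes K_{X'}^{*}$ with $c_1(F,h^F)=c_1((L\mid_{X'})^p,h^pe^{-\varphi_z})+Ric_\Omega$. It is precisely the hypothesis $Ric_{\widetilde\Omega}\geq0$, descending to $Ric_\Omega\geq0$ on $X'$, that keeps this twisted curvature $\geq\lambda\Omega$. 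Once this is in place, the remaining estimates are an unchanged transcription of the proof of Theorem \ref{T:Pp}.
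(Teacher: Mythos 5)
Your proposal is correct and follows essentially the same route as the paper: reduce to the proof of Theorem \ref{T:Pp} with $(n,q)$-forms replaced by $(0,q)$-forms, keep the upper bound and the Ohsawa--Takegoshi/cut-off construction unchanged, and invoke Corollary \ref{l2est23} in place of Corollary \ref{l2est2} for the $\overline\partial$-step, which is exactly where $Ric_{\widetilde\Omega}\geq0$ (descending to $Ric_\Omega\geq0$ on $X_{reg}^{orb}$) is used. Your identification of the crux --- that the $(0,1)$-estimate is obtained by twisting with $K_{X'}^{*}$ so that $c_1(F,h^F)$ picks up the Ricci term, kept nonnegative by the hypothesis --- is precisely the content of the paper's Corollary \ref{l2est23} and its role in this proof.
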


\begin{proof} By Lemma \ref{L:Sext}, $P_p$ is the Bergman kernel function of $H_{(2)}^0(X',(L\mid_{X'})^p)=H_{(2)}^0({\mathcal X},L^p)$, where the norm is denoted by $\|\cdot\|$.
We repeat the proof of Theorem \ref{T:Pp} by using the same notations and replacing $(n,q)$-forms with $(0,q)$-forms.
Hence the only non-formal difference is the proof of the analogue of lower estimate from \eqref{e:Bke}. More precisely, we have to show that there exist a constant $C_1>0$ and $p_0\in\mathbb{N}$ such that for all $p>p_0$ and all $z\in U$ with $\psi_\alpha(z)>-\infty$ there is a section $S_{z,p}\in H_{(2)}^0(X',(L\mid_{X'})^p)$ with $S_{z,p}(z)\neq0$ and satifying \eqref{e:sp}.

\par Let $v_{z,p}\in{\mathcal O}(V)$ with $v_{z,p}(z)\neq0$ satisfying \eqref{e:ohst}, given by the Ohsawa-Takegoshi theorem, and define the quasi-psh function $\varphi_z$ as in \eqref{e:weight}. We solve the $\overline\partial$-equation with $L^2$-estimates in order to extend $v_{z,p}(z)e_\alpha^{\otimes p}(z)$ to the desired section of $(L\mid_{X'})^p$ over $X'$.  To this end, we proceed exactly as in the proof of Theorem \ref{T:Pp}, and use Corollary \ref{l2est23} instead of Corollary \ref{l2est2}.
\end{proof}

\comment{
\begin{proof} By Lemma \ref{L:Sext}, $P_p$ is the Bergman kernel function of $H_{(2)}^0(X',(L\mid_{X'})^p)=H_{(2)}^0({\mathcal X},L^p)$, where the norm is denoted by $\|\cdot\|$.
We repeat the proof of Theorem \ref{T:Pp} by using the same notations and replacing $(n,q)$-forms with $(0,q)$-forms.
Hence the only non-formal difference is the proof of the analogue of lower estimate from \eqref{e:Bke}. More precisely, we have to show that there exist a constant $C_1>0$ and $p_0\in\mathbb{N}$ such that for all $p>p_0$ and all $z\in U$ with $\psi_\alpha(z)>-\infty$ there is a section $S_{z,p}\in H_{(2)}^0(X',(L\mid_{X'})^p)$ with $S_{z,p}(z)\neq0$ and satifying \eqref{e:sp}.

Let $v_{z,p}\in{\mathcal O}(V)$ with $v_{z,p}(z)\neq0$ satisfying \eqref{e:ohst}, given by the Ohsawa-Takegoshi theorem.
We solve the $\overline\partial$-equation with $L^2$-estimates in order to extend $v_{z,p}$ to a section of $(L\mid_{X'})^p$ over $X'$. Define the quasi-psh function $\varphi_z$ as in \eqref{e:weight}.
Then there exists $C>0$ such that $dd^c\varphi_z\geq -C\Omega$ on $X'$ for all $z\in U$.
Since the function $\varepsilon>0$ is continuous, there exists a constant $a'>0$ such that \[c_1(L\mid_{X'},h)\geq a'\Omega \quad\text{on a neighborhoof of $\overline{V}$}.\] Therefore there exists $a>0$, $p_0\in\mathbb{N}$ such that for all $p\geq p_0$ and all $z\in U$
\begin{equation*}
\begin{split}
&c_1((L\mid_{X'})^p,h^pe^{-\varphi_z})\geq0\;\;\text{on $X'$}\,,\\ &c_1((L\mid_{X'})^p,h^pe^{-\varphi_z})\geq ap\,\Omega\;\;\text{on a neighborhoof of $\overline{V}$}.
\end{split}
\end{equation*}
Let $\lambda:X'\to[0,\infty)$ be a continuous function such that $\lambda=ap$ on $\overline{V}$ and  \[c_1((L\mid_{X'})^p,h^pe^{-\varphi_z})\geq\lambda\Omega\;\;\text{on $X'$}\,.\]
Consider the form $g_{z,p}\in L^2_{0,1}(X',(L\mid_{X'})^p,\Omega^n)$ , $g_{z,p}=\overline\partial\big(v_{z,p}\,\theta\big(\tfrac{|y-z|}{r_0}\big)e_\alpha^{\otimes p}\big)$.
Then
\[\int_{X'}\frac{1}{\lambda}\,|g_{z,p}|^2_{h_p}e^{-\varphi_z}\Omega^n=\int_{V}\frac{1}{\lambda}\,|g_{z,p}|^2_{h_p}e^{-\varphi_z}\Omega^n=\frac{1}{ap}\int_V|g_{z,p}|^2_{h_p}e^{-\varphi_z}\Omega^n<+\infty\,.\]
The integral at the right is finite since $\psi_\alpha(z)>-\infty$ and
$$\int_{V}|g_{z,p}|^2_{h_p}e^{-\varphi_z}\Omega^n=\int_V|v_{z,p}|^2|\overline\partial\theta(\tfrac{|y-z|}{r_0})|^{2}e^{-2p\psi_\alpha}e^{-\varphi_z}\Omega^n\leq C''\int_V|v_{z,p}|^2e^{-2p\psi_\alpha}\Omega^n,$$
where $C''>0$ is a constant that depends only on $x$.

Thus, the hypotheses of Corollary \ref{l2est23} are satisfied for the K\"ahler manifold
$(X',\Omega)$, the semipositive line bundle $((L\mid_{X'})^p,h^pe^{-\varphi_z})$ and the form $g_{z,p}$, for all $p\geq p_0$ and $z\in U$.
So there exists $u_{z,p}\in L^2_{0,0}(X',(L\mid_{X'})^p,loc)$ such that $\overline\partial u_{z,p} =g_{z,p}$ and
\begin{equation*}
\int_{X'}|u_{z,p}|^2_{h_p}e^{-\varphi_z}\,\Omega^n\leq\int_{X'}\frac{1}{\lambda}\,|g_{z,p}|^2_{h_p}e^{-\varphi_z}\Omega^n=\frac{1}{ap}\int_V|g_{z,p}|^2_{h_p}e^{-\varphi_z}\Omega^n\,.
\end{equation*}
 Near $z$, $e^{-\varphi_z(y)}=r_0^{2n}|y-z|^{-2n}$ is not integrable, hence by the above, $u_{z,p}(z)=0$. Define
$$S_{z,p}:=v_{z,p}\,\theta\big(\tfrac{|y-z|}{r_0}\big)e_\alpha^{\otimes p}-u_{z,p}\,.$$
Then $\overline\partial S_{z,p}=0$, $S_{z,p}(z)=v_{z,p}(z)e_\alpha^{\otimes p}(z)\neq0$, $S_{z,p}\in H^0_{(2)}(X',(L\mid_{X'})^p,\Omega^n)$. Since $\varphi_z\leq0$ on $X'$,
\begin{eqnarray*}
\|S_{z,p}\|^2&\leq&2\left(\int_V|v_{z,p}|^2e^{-2p\psi_\alpha}\Omega^n+\int_{X'}|u_{z,p}|^2_{h_p}e^{-\varphi_z}\,\Omega^n\right)\\
&\leq&2C'\left(1+\frac{C''}{ap}\right)|v_{z,p}(z)|^2e^{-2p\psi_\alpha(z)}\leq C_1|S_{z,p}(z)|^2_{h_p},
\end{eqnarray*}
with a constant $C_1>0$ that depends only on $x$.
This concludes the proof.
\end{proof}
}

\begin{proof}[Proof of Theorem \ref{T:mt2}] We follow the proof of Theorem \ref{T:mt1} and apply Theorem \ref{T:Pp1} instead of Theorem \ref{T:Pp}.
\end{proof}

\begin{proof}[Proof of Theorem \ref{T:mt3}] Since $(L,{\mathcal X},\widetilde h)$ is a positive orbifold line bundle, \cite[Theorem 5.4.19]{MM07} yields $\frac{1}{p}\,\log P_p\to0$ in $L_{loc}^1(X_{reg}^{orb},\Omega^n)$ as $p\to\infty$, so the proof of Theorem \ref{T:mt1} applies again.
\end{proof}
In the case of Theorem \ref{T:mt3} the convergence of the induced Fubini-Study metric to the initial metric is quite explicit. Namely, it is shown in \cite[Theorem 5.4.19]{MM07} that there exists $c>0$
such that for any $\ell\in\mathbb{N}$, there exists $C_\ell>0$ with 
\begin{equation*}\label{pb4.45}
\Big|\frac{1}{p}(\Phi_p^\ast \omega_{FS})(x) -\omega(x)\Big|_{\mathscr{C}^\ell}\leqslant C_\ell
\Big(\frac{1}{p}+ p^{\ell/2} \exp\big(-c\sqrt{p}\, d(x,X^{orb}_{sing})\big)\Big)\,,\:\: p\gg1.
\end{equation*}

We end the section with two lemmas which imply Proposition \ref{P:cKS}.

\begin{Lemma}
Let ${\mathcal X}=(X,{\mathcal U})$ be a compact K\"ahler orbifold. Then $X_{reg}^{orb}$ carries a complete K\"ahler metric.
\end{Lemma}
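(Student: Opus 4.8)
The plan is to realize the regular locus as $X_{reg}^{orb}=X\setminus A$ with $A:=X_{sing}^{orb}$ a \emph{compact} closed analytic subset of $X$, and to build a complete K\"ahler metric on this smooth manifold by perturbing a background K\"ahler form near $A$ with a potential having a logarithmic pole along $A$. As background I would take the form $\Omega=\boldsymbol{F}(\widetilde\Omega)$ furnished by Proposition \ref{P:Kc}: it is a genuine smooth K\"ahler form on the manifold $X_{reg}^{orb}$ (with continuous local potentials across $A$). Since we only need \emph{some} complete K\"ahler metric on $X_{reg}^{orb}$, and this set is a true complex manifold, the metric we produce need not be an orbifold metric; nevertheless the data near $A$ will be constructed in orbifold charts and averaged to be $\Gamma_i$-invariant, so that it descends.

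First I would fix the local model. In an orbifold chart $(\widetilde U_i,\Gamma_i,\phi_i)$ meeting $A$, the ideal of $\widetilde A_i=\phi_i^{-1}(A)$ is $\Gamma_i$-invariant, so $\widetilde A_i$ is the common zero set of finitely many $\Gamma_i$-invariant holomorphic functions $g_1,\dots,g_N$, and $\psi_i:=\log\sum_l|g_l|^2$ is psh, $\Gamma_i$-invariant (hence descends to $U_i$), and tends to $-\infty$ exactly along $A$; after rescaling we may assume $\psi_i\le -1$. The key computation is the identity
\[
dd^c\bigl(-\log(-\psi_i)\bigr)=\frac{1}{-\psi_i}\,dd^c\psi_i+d\sigma\wedge d^c\sigma,\qquad \sigma:=\log(-\psi_i),
\]
so that for $\psi_i$ psh this dominates $d\sigma\wedge d^c\sigma\ge 0$, and for a quasi-psh $\psi_i$ with $dd^c\psi_i\ge -C\Omega$ it dominates $-C\Omega+d\sigma\wedge d^c\sigma$ (using $-\psi_i\ge 1$). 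Since $\sigma=\log\log\bigl(1/\sum_l|g_l|^2\bigr)\to+\infty$ along $A$, the length of a path approaching $A$ with respect to any metric dominating $d\sigma\wedge d^c\sigma$ is bounded below by the total variation of $\sigma$, hence is infinite; this is the mechanism producing completeness near $A$.

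The hard part will be passing from these local potentials to a \emph{single} global quasi-psh function $\psi$ on $X_{reg}^{orb}$ with $\psi\to-\infty$ exactly along $A$ and $dd^c\psi\ge -C\Omega$: this is precisely the content of Ohsawa's argument in \cite{O87}. The difficulty is that on overlaps the local functions $\log\sum_l|g_l|^2$ differ by quantities that are bounded but whose differentials blow up along $A$, so a naive partition-of-unity gluing produces cross terms $d\theta\wedge d^c\psi$ that are unbounded near $A$, and keeping the negative part of $dd^c\psi$ controlled by $\Omega$ is the crux. A clean alternative route to such a $\psi$ is to pass to a K\"ahler resolution $\pi:\widehat X\to X$ (an isomorphism over $X_{reg}\supseteq X_{reg}^{orb}$) and, after further blow-ups along smooth centers, arrange $\pi^{-1}(A)$ to be a divisor $D$ in the compact K\"ahler manifold $\widehat X$; then $X_{reg}^{orb}\cong\widehat X\setminus D$ and, by Poincar\'e--Lelong, $\psi=\log\|s_D\|_{h_D}^2$ (the log-norm of the canonical section of $\mathcal O(D)$ for a smooth metric) satisfies $dd^c\psi=-c_1(\mathcal O(D),h_D)\ge -C\Omega$ away from $D$, with a logarithmic pole exactly along $D$.

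Granting such a $\psi$ (rescaled so $\psi\le -1$), I would set $\omega_c:=B\,\Omega+dd^c\bigl(-\log(-\psi)\bigr)$ with $B$ large. By the identity above, $\omega_c\ge (B-C)\,\Omega+d\sigma\wedge d^c\sigma$, so choosing $B>C$ makes $\omega_c$ a smooth positive closed $(1,1)$-form on $X_{reg}^{orb}$, i.e.\ a K\"ahler metric. Completeness then follows from compactness of $X$: any sequence leaving every compact subset of $X_{reg}^{orb}$ must accumulate on $A$, and near $A$ the bound $\omega_c\ge d\sigma\wedge d^c\sigma$ with $\sigma\to+\infty$ forces such paths to have infinite length, so $(X_{reg}^{orb},\omega_c)$ is complete. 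Thus the only genuinely delicate point is the global construction of $\psi$ with controlled curvature; the local $-\log(-\log)$ computation, the positivity of $\omega_c$, and the completeness estimate are routine once $\psi$ is in hand.
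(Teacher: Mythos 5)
Your primary route is, at bottom, the same as the paper's: the paper also treats the construction of the global exhaustion function as the crux and outsources it, citing \cite[Proposition 1.1]{O87} to obtain a smooth proper function $\psi:X_{reg}^{orb}\to(-\infty,0]$ such that $i\partial\overline\partial\psi+A\eta$ is positive (where $\eta$ is a Hermitian metric on the complex space $X$) and $\psi$ has bounded gradient; it then sets $\omega_A=i\partial\overline\partial\psi+(A/\alpha)\omega$, where $\omega$ is the K\"ahler current of Proposition \ref{P:Kc} and $\omega\geq\alpha\eta$ by compactness of $X$, and deduces completeness from ``proper function with bounded gradient'' --- which is exactly your mechanism $\omega_c\geq d\sigma\wedge d^c\sigma$ with $\sigma$ a proper exhaustion. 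Your local $-\log(-\psi)$ identity and the $\Gamma_i$-averaging correctly describe what happens inside Ohsawa's construction, and you are right that the partition-of-unity gluing with curvature control is the one step that is not done from scratch; both you and the paper defer precisely that step to \cite{O87}.

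Your alternative route (resolution plus Poincar\'e--Lelong) is genuinely different from the paper and is attractive, but as written it has two gaps. First, it requires a resolution $\widehat X\to X$ that is itself a compact \emph{K\"ahler} manifold; this does not come for free from Hironaka --- one needs that blowing up a compact K\"ahler space along a smooth center again yields a K\"ahler space (a Varouchas/Fujiki-type statement that must be cited), and membership in Fujiki's class $\mathcal C$ would not suffice, since class $\mathcal C$ manifolds need not be K\"ahler. Second, Poincar\'e--Lelong gives $dd^c\psi=-c_1(\mathcal O(D),h_D)\geq-C\widehat\omega$ for a K\"ahler form $\widehat\omega$ on $\widehat X$, not $\geq-C\Omega$: near exceptional fibers over $X_{sing}$ the pullback $\pi^\star\Omega$ degenerates in the directions contracted by $\pi$, while along branch divisors $\Omega$ itself blows up (cf.\ the paper's example $\frac{i}{|x_1|}\,dx_1\wedge d\overline x_1$), so $\widehat\omega$ and $\Omega$ are not comparable near $D$ and the bound $dd^c\psi\geq-C\Omega$ you ask for is not what this construction delivers. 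The repair is easy and makes the route cleaner: work entirely on $\widehat X$ and take $\omega_c=B\,\widehat\omega+dd^c\bigl(-\log(-\psi)\bigr)$ on $\widehat X\setminus D\cong X_{reg}^{orb}$, i.e.\ invoke the classical complete K\"ahler metric on the complement of a divisor in a compact K\"ahler manifold; with that modification Proposition \ref{P:Kc} is not needed at all in this route.
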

\begin{proof}
We will adapt the arguments of \cite{O87} to our context. Let $\omega$ be a K\"ahler current on $X$ whose local potentials are continuous near each $x\in X$ and smooth near each $x\in X_{reg}^{orb}$ (cf.\ Proposition \ref{P:Kc}). 
Let us consider a Hermitian metric $\eta$ on the complex space $X$ 
(
see e.\,g.\ \cite[Definition\,3.4.12]{MM07}).
Now, $X_{reg}^{orb}=X\setminus X_{sing}^{orb}$ and $X_{sing}^{orb}$ is an analytic set containing $X_{sing}$\,.
Repeating the proof from \cite[Proposition 1.1]{O87} we find a smooth proper function
$\psi:X_{reg}^{orb}\to (-\infty,0]$ and a constant $A_0>0$ such that for all $A>A_0$, $\eta_A=i\partial\overline\partial\psi+A \eta$ is a Hermitian metric on $X_{reg}^{orb}$ and the gradient of $\psi$ with respect to $\eta_A$ is bounded. Since $X$ is compact, there exists $\alpha>0$ such that $\omega\geq\alpha\eta$ on $X_{reg}$\,. Set  $\omega_A=i\partial\overline\partial\psi+(A/\alpha)\omega$, so $\omega_A\geq\eta_A$\,. Hence, for $A$ sufficiently large, $\omega_A$ is a K\"ahler metric and the gradient of $\psi$ with respect to $\omega_A$ is bounded, thus $\omega_A$ is complete.
\end{proof}

\begin{Lemma}
Let $X$ be a reduced Stein space of dimension $n$ and let $Y$ be a (closed) analytic subset of $X$ containing the singular locus $X_{sing}$\,. Then $X\setminus Y$ carries a complete K\"ahler metric.
\end{Lemma}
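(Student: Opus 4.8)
The plan is to adapt the construction from the previous lemma, balancing a Stein exhaustion that gives completeness towards the ends of $X$ against a Poincar\'e-type potential that gives completeness towards $Y$. Since $Y\supset X_{sing}$, the set $M:=X\setminus Y$ is a complex \emph{manifold}, and on it any form $i\partial\overline\partial v$ with $v$ psh is a closed positive $(1,1)$-form, hence K\"ahler once it is positive definite; so it suffices to build psh potentials whose complex Hessian is positive definite and induces a complete distance. I would first fix a smooth strictly psh exhaustion $\rho\colon X\to[0,\infty)$ (available since $X$ is Stein) and, as in \cite{O87}, compose it with a convex increasing function so that the gradient of $\rho$ with respect to $i\partial\overline\partial\rho$ is bounded; this renders $i\partial\overline\partial\rho$ complete in every direction along which $\rho\to+\infty$, i.e.\ towards the ends of $X$.

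Next I would produce a potential blowing up exactly along $Y$. As $X$ is Stein, Cartan's Theorem~A yields global generators $g_1,g_2,\dots\in\mathcal O(X)$ of the ideal sheaf of $Y$, whose common zero set is $Y$. Choosing $c_j>0$ decaying fast enough relative to the $\mathscr{C}^m$-norms of the $g_j$ on an exhaustion of $X$, the series $|g|^2:=\sum_j c_j|g_j|^2$ converges in $\mathscr{C}^\infty_{loc}(X)$ to a smooth psh function with zero set exactly $Y$; rescaling, I may assume $|g|^2<1$ on a neighbourhood $W$ of $Y$. On $W\setminus Y$ I set $u:=\log|g|^2<0$ and $\Phi:=-\log(-u)$. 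Since $s\mapsto-\log(-s)$ is increasing and convex on $(-\infty,0)$, the function $\Phi$ is psh, and
\[i\partial\overline\partial\Phi=\tfrac{1}{u^2}\,i\partial u\wedge\overline\partial u+\tfrac{1}{-u}\,i\partial\overline\partial u\ \geq\ \tfrac{1}{u^2}\,i\partial u\wedge\overline\partial u,\]
the higher-dimensional analogue of the Poincar\'e metric on the punctured disc. The key point is that the auxiliary function $\Psi:=\log(-\log|g|^2)=-\Phi$ is proper towards $Y$ on $W\setminus Y$ (it tends to $+\infty$ as $|g|^2\to0$) and has bounded gradient for $i\partial\overline\partial\Phi$: the one-variable model gives $|\partial\Psi|^2\equiv1$, and the same estimate persists here.

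Then I would glue and apply a completeness criterion. With a cutoff $\chi=\theta(|g|^2)$ equal to $1$ near $Y$ and supported in $\{|g|^2<\tfrac12\}\Subset W$, set
\[\omega:=A\,i\partial\overline\partial\rho+i\partial\overline\partial(\chi\Phi),\]
choosing $A\gg1$ so that on the transition region, where $\Phi$ and its derivatives are bounded, the negative part of $i\partial\overline\partial(\chi\Phi)$ is absorbed by $A\,i\partial\overline\partial\rho$; then $\omega$ is a smooth K\"ahler metric on $M$. For completeness I would use the proper exhaustion $\Theta:=\rho+\chi\Psi$ of $M$: a divergent sequence either escapes every compact of $X$, forcing $\rho\to+\infty$, or accumulates on $Y$, forcing $\Psi\to+\infty$, so $\Theta\to+\infty$; and $\Theta$ has bounded $\omega$-gradient by the two bounded-gradient properties above. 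By the Gaffney criterion (a Riemannian manifold admitting a proper function with bounded gradient is complete) $\omega$ is complete, proving the lemma.

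The hard part will be the simultaneous control of the two sources of incompleteness by a single metric $\omega$ and a single exhaustion $\Theta$. In particular one must check that the bounded-gradient normalisation of $\rho$ from \cite{O87} and the bounded-gradient property of $\Psi$ both survive the sum and the cutoff, and that the transition-region estimate $i\partial\overline\partial(\chi\Phi)\geq-C\,i\partial\overline\partial\rho$ holds \emph{uniformly}, even though that region need not be relatively compact in $X$; handling this uniformity (together with the $\mathscr{C}^\infty_{loc}$ convergence of $\sum_j c_j|g_j|^2$ to a function with zero set precisely $Y$) is where the work lies.
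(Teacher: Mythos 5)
Your route is genuinely different from the paper's, and as written it has a gap at exactly the step you flag as ``where the work lies.'' The trouble is that on the transition region the negative part of $i\partial\overline\partial(\chi\Phi)$ is not of the size of $i\partial\overline\partial\rho$ but of the size of $i\partial\overline\partial|g|^2$: writing $t=|g|^2$ and $\chi\Phi=F(t)$, where $F$ and its derivatives are bounded on $\{c_1\le t\le c_2\}$, you have
\begin{equation*}
i\partial\overline\partial(\chi\Phi)=F'(t)\,i\partial\overline\partial t+F''(t)\,i\partial t\wedge\overline\partial t\,,\qquad
i\partial\overline\partial t=\sum_j c_j\,i\,\partial g_j\wedge\overline\partial g_j\,,
\end{equation*}
and nothing in your construction bounds $i\partial\overline\partial t$ by a constant multiple of $i\partial\overline\partial\rho$ on this region, which is in general non-compact. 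No choice of $A\gg1$ can help, because you have no control over the generators that Cartan's Theorem A produces: already for $X=\mathbb{C}^2$, $Y=\{z_2=0\}$, the functions $g_1=z_2$, $g_2=z_2e^{h(z_1)}$ are an admissible generating system for any entire $h$, and on $\{|g|^2\approx 1/4\}$ the form $i\partial\overline\partial|g|^2$ acquires a $dz_1\wedge d\overline z_1$ component of size roughly $|h'(z_1)|^2$, which (for $h$ of sufficiently fast growth) exceeds every constant multiple of the Hessian of any exhaustion $\rho$ fixed in advance. So the absorption claim, as stated, is not just unproven but false in general.

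The missing idea that repairs your argument is to absorb into $i\partial\overline\partial|g|^2$ itself rather than into $i\partial\overline\partial\rho$. By Cauchy--Schwarz, $i\partial t\wedge\overline\partial t\le t\,i\partial\overline\partial t$, so on the transition region (where $t\le1$ and $|F'|,|F''|\le C_0$) one gets $i\partial\overline\partial(\chi\Phi)\ge-2C_0\,i\partial\overline\partial|g|^2$, while off it $i\partial\overline\partial(\chi\Phi)\ge0$; since $|g|^2$ is a globally defined smooth psh function on $X$, the form $\omega:=i\partial\overline\partial\rho+2C_0\,i\partial\overline\partial|g|^2+i\partial\overline\partial(\chi\Phi)$ is a K\"ahler metric on $X\setminus Y$ (here $\rho$ is your reparametrized strictly psh exhaustion), and your Gaffney argument with $\Theta=\rho+\chi\Psi$ then goes through, noting that $\chi\Psi$ is bounded below because $|g|^2\le1/2$ on the support of $\chi$. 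With this correction your proof is a valid alternative --- essentially Grauert's original construction carried out directly on the Stein space. The paper instead disposes of the lemma with two citations: Narasimhan's embedding theorem gives a one-to-one, proper holomorphic map $\iota:X\to\mathbb{C}^{2n+1}$, regular on $X_{reg}$, so that $X\setminus Y$ is a closed complex submanifold of $\mathbb{C}^{2n+1}\setminus\iota(Y)$; Grauert's theorem provides a complete K\"ahler metric on the latter, and its restriction to a closed submanifold is again complete K\"ahler. That route trades all of the above analysis for two classical theorems and sidesteps the uniformity issue entirely.
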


\begin{proof}
A result of Narasimhan \cite[Theorem 5]{Nar60} states that a reduced Stein space $X$ of dimension $n$ admits a holomorphic mapping $\iota:X\to\mathbb{C}^{2n+1}$, which is one-to-one, proper and regular on $X_{reg}$\,. Hence $X\setminus Y$ is biholomorphic to $\iota(X)\setminus\iota(Y)$. Since $\iota$ is proper, $\iota(Y)$ is an analytic subset of $\mathbb{C}^{2n+1}$. 
By a theorem of Grauert
\cite[Satz\,A,\,p.\,51]{Gra56}, $\mathbb{C}^{2n+1}\setminus\iota(Y)$ admits a complete K\"ahler metric. Its restriction to the closed submanifold $\iota(X)\setminus\iota(Y)\cong X\setminus Y$
is a complete K\"ahler metric.
%
\end{proof}
\begin{Example}
Let us state some interesting particular cases of our results.

\par $(i)$ Let $X$ be a Stein manifold of dimension $n$. Let $\varphi$ be a psh function on $X$ which is strictly psh on an open set $G$. Consider the Hilbert spaces
\[H^{n,0}_{(2)}(X,p\varphi)=\Big\{\text{$s$ holomorphic $n$-form}:\int_X i^{n^2}s\wedge\overline{s}\,e^{-2p\,\varphi}<\infty\Big\}\,.\]
Thus $H^{n,0}_{(2)}(X,p\varphi)=H^{n,0}_{(2)}(X,L^p)$, where $(L,h)$ is the trivial line bundle endowed with the metric $h=e^{-\varphi}$. 
Let $\{S_j^p\}$ be an orthonormal basis of $H^{n,0}_{(2)}(X,p\varphi)$. Let $\Omega^n$ be a volume element on $X$ and define $f_j^p=i^{n^2}S_j^p\wedge\overline{S_j^p}/\Omega^n$. The Fubini-Study current associated to $H^{n,0}_{(2)}(X,p\varphi)$ is 
$\gamma_p=\frac12 dd^c\log(\sum_{j=1}^\infty|f_j^p|^2)$. By Theorem \ref{T:mt1} and Proposition \ref{P:cKS} (ii), we have 
$\frac{1}{p}\,\gamma_p\to dd^c\varphi$ weakly as currents on $G$ as $p\to\infty$.

\par $(ii)$ Let $X$ be a Stein space of pure dimension $n$. Assume that $X$ has only orbifold singularities, i.\,e.\ $X$ admits an orbifold structure such that $X_{sing}^{orb}=X_{sing}$\,. Let $\varphi$ be a psh function on $X$ which is strictly psh on an open set $G$. Consider the Hilbert spaces
\[H^{n,0}_{(2)}(X,p\varphi)=\Big\{\text{$s$ holomorphic $n$-form on $X_{reg}$}:\int_{X_{reg}} i^{n^2}s\wedge\overline{s}\,e^{-2p\,\varphi}<\infty\Big\}\,.\]
By defining the Fubini-Study currents as above with the help of a volume element $\Omega^n$ on $X_{reg}$ we obtain that   
$\frac{1}{p}\,\gamma_p\to dd^c\varphi$ weakly as currents on $G_{reg}=G\cap X_{reg}$ as $p\to\infty$.


\par $(iii)$ Let $X$ be a complete K\"ahler manifold, $(L,h)\longrightarrow X$ a holomorphic line bundle endowed with a singular Hermitian metric $h$ with $c_1(L,h)\geq0$ in the sense of currents. Assume that $c_1(L,h)$ is strictly positive on an open set $G$. Denote by $\gamma_p$ the Fubini-Study current associated to $H^{n,0}_{(2)}(X,L^p)$. Then, as $p\to\infty$, $\frac{1}{p}\,\log P_p\to 0$ in $L^1_{loc}(G)$ and $\frac{1}{p}\,\gamma_p\to c_1(L,h)$ weakly as currents on $G$. Note that actually the full asymptotic expansion of the Bergman kernel \eqref{e:full_asy} on $G$ was obtained in \cite{HsM11}, in the case that $h$ is smooth. 
\end{Example}

\section{Distribution of zeros of random sections}\label{S:SZ}

\par In this section we prove Theorem \ref{T:SZ}. For the convenience of the reader, we recall here the results of Shiffman and Zelditch \cite{ShZ99,ShZ08} that are needed to prove Theorem \ref{T:SZ}. As noted in \cite{ShZ08}, their calculations hold in a general setting. 

\medskip

\par Let $(L,{\mathcal X},\widetilde h)$ be an orbifold line bundle over the $n$-dimensional orbifold $\mathcal X$, endowed with a singular Hermitian metric $\widetilde h$, and let $\widetilde\Omega$ be a Hermitian form on $\mathcal X$. No further assumptions on $\mathcal X,\widetilde\Omega,\widetilde h$ are made at this time.

\par If $S=\{\widetilde S_i\}\in H^0(\mathcal X,L)$, $S\neq0$, we denote by $[\widetilde S_i=0]$ the current of integration (with multiplicities) over the analytic hypersurface $\{\widetilde S_i=0\}\subset\widetilde U_i$, and we set $[S=0]=\{[\widetilde S_i=0]\}\in\widetilde{\mathcal T}$.

\smallskip

\subsection{Expectation and variance estimate}

\par Suppose $\mathcal V\subset H^0_{(2)}({\mathcal X},L)$ is a {\em finite dimensional} vector subspace of $L^2$-holomorphic sections, and let $\{S_1,\dots,S_d\}$ be an orthonormal basis of $\mathcal V$, where $d=\dim\mathcal V$. The Bergman kernel function $P$ and Fubini-Study current $\widetilde\alpha$ are defined as in Section \ref{SS:FS} (see \eqref{e:Bergfcn} and \eqref{e:FSc}). Note that, for every orbifold chart $(\widetilde U_i,\Gamma_i,\phi_i)$, $\log P\circ\phi_i\in L^1_{loc}(\widetilde U_i,\widetilde\Omega_i^n)$ since it is locally the difference of a psh function and the weight of the metric $\widetilde h_i$.

\par Following the framework in \cite{ShZ99}, we identify the unit sphere $\mathcal S$ of $\mathcal V$ to the unit sphere ${\mathbf S}^{2d-1}$ in ${\mathbb C}^d$ by
$$a=(a_1,\dots,a_d)\in{\mathbf S}^{2d-1}\longmapsto S_a=\sum_{j=1}^da_jS_j\in{\mathcal S},$$
and we let $\lambda$ be the probability measure on $\mathcal S$ induced by the normalized surface measure on ${\mathbf S}^{2d-1}$, denoted also by $\lambda$ (i.e. $\lambda({\mathbf S}^{2d-1})=1$).

\par Consider the function
$$Y(a)=\big\langle[S_a=0]-\widetilde\alpha,\widetilde\theta\,\big\rangle\,,\;\;a\in{\mathbf S}^{2d-1}\,,$$
where $\widetilde\theta$ is a $(n-1,n-1)$ test form on $\mathcal X$. 
Following \cite[Sec. 3.1, 3.2]{ShZ99} (see also \cite[Sec.\,5.3]{MM07}) we obtain in our setting
\begin{equation}
\int_{{\mathbf S}^{2d-1}}Y(a)\,d\lambda(a)=0\,,  \label{e:SZ1}
\end{equation}
(cf.\ \cite[Theorem 5.3.1]{MM07}) and by using \eqref{e:SZ1} (cf.\ \cite[Lemma 5.3.2, (5.3.22-23)]{MM07}), 
\begin{equation}
\int_{{\mathbf S}^{2d-1}}|Y(a)|^2\,d\lambda(a)\leq A\,C_{\widetilde\theta}\,,\,\text{ with } A=\frac{1}{\pi^2}\int_{{\mathbb C}^2}(\log|z_1|)^2e^{-|z_1|^2-|z_2|^2}\,dz\,,  \label{e:SZ2}
\end{equation}
where $dz$ is the Lebesgue measure on ${\mathbb C}^2$, and $C_{\widetilde\theta}\,$ is a constant depending only on $\widetilde\theta$. Equation \eqref{e:SZ1} says that the expectation of the current-valued random variable $a\mapsto[S_a=0]$ is the Fubini-Study current $\widetilde\alpha$.

\smallskip

\subsection{Almost everywhere convergence}

\par We assume now that $(\mathcal X,\widetilde\Omega)$ is a {\em compact K\"ahler} orbifold and $(E_p,\mathcal X,\widetilde h_p)$ are singular Hermitian orbifold line bundles with the following property: there exists a constant $C>0$ so that $\big\langle c_1(E_p,\widetilde h_p),\widetilde\Omega^{n-1}\big\rangle\leq Cp$, for all $p>0$.

\par Let $\mathcal V^p\subset H^0_{(2)}({\mathcal X},E_p)$ be vector subspaces of $L^2$-holomorphic sections with corresponding Fubini-Study currents $\widetilde\alpha_p$. Consider the unit sphere ${\mathcal S}^p\subset{\mathcal V}^p$ with the probability measure $\lambda_p$ induced as above by the normalized area measure on the unit sphere ${\mathbf S}^{2d_p-1}$ in ${\mathbb C}^{d_p}$ via the identification ${\mathcal S}^p\equiv{\mathbf S}^{2d_p-1}$, where $d_p=\dim{\mathcal V}^p$. Finally, consider the probability space ${\mathcal S}_\infty=\prod_{p=1}^\infty{\mathcal S}^p$ endowed with the probability measure $\lambda_\infty=\prod_{p=1}^\infty\lambda_p$. Recall that formulas \eqref{e:SZ1} and \eqref{e:SZ2} hold for each of the spaces $\mathcal V^p$. 

\par 
In analogy to \cite[Sec.\ 3.3]{ShZ99} (see also \cite[Sec.\,5.3]{MM07}) we see that
for $\lambda_\infty$-a.e. sequence $\{\sigma_p\}_{p\geq1}\in{\mathcal S}_\infty$,
\begin{equation}\label{e:SZ3}
\lim_{p\to\infty}\,\frac{1}{p}\,\big([\sigma_p=0]-\widetilde\alpha_p\big)=0\,,
\end{equation}
weakly in the sense of currents on $\mathcal X$. The argument is as follows. Since
$$\big\langle[\sigma_p=0],\widetilde\Omega^{n-1}\big\rangle=\big\langle\widetilde\alpha_p,\widetilde\Omega^{n-1}\big\rangle=\big\langle c_1(E_p,\widetilde h_p),\widetilde\Omega^{n-1}\big\rangle\leq Cp\,,$$
(this follows from the Poincar\'e-Lelong formula \cite[Theorem 2.3.3]{MM07}) it suffices to show that, for a fixed test form $\widetilde\theta$, one has
$$\lim_{p\to\infty}\,\frac{1}{p}\,\big\langle[\sigma_p=0]-\widetilde\alpha_p,\widetilde\theta\,\big\rangle=0\,,$$
for $\lambda_\infty$-a.e. $\sigma=\{\sigma_p\}_{p\geq1}\in{\mathcal S}_\infty$. Setting
$$Y_p:{\mathcal S}_\infty\longrightarrow\mathbb{C}\,,\quad Y_p(\sigma)=\frac{1}{p}\,\big\langle[\sigma_p=0]-\widetilde\alpha_p,\widetilde\theta\,\big\rangle\,,$$
it follows by \eqref{e:SZ2} that
$$\int_{{\mathcal S}_\infty}\left(\sum_{p=1}^\infty|Y_p|^2\right)\,d\lambda_\infty=\sum_{p=1}^\infty\int_{{\mathcal S}^p}|Y_p|^2\,d\lambda_p\leq A\,C_{\widetilde\theta}\;\sum_{p=1}^\infty\frac{1}{p^2}<+\infty\,,$$
which yields the desired conclusion.

\begin{proof}[Proof of Theorem \ref{T:SZ}]
This follows from \eqref{e:SZ3} and from Theorems \ref{T:mt1}, \ref{T:mt2}, \ref{T:mt3}, and Corollary \ref{C:mt1}.
\end{proof}


\begin{thebibliography}{XXXXX}

\bibitem[ALR]{ALR07} A. Adem, J. Leida and Y. Ruan, {\em Orbifolds and Stringy Topology}, Cambridge Tracts in Mathematics, vol. 171, Cambridge University Press, 2007, 149 p.









\bibitem[BG]{BG08} C. Boyer and K. Galicki, {\em Sasakian Geometry}, Oxford Mathematical Monographs, Oxford University Press, 2008, 631 p.

\bibitem[BGK]{BGK05} C. Boyer, K. Galicki and J. Koll\'ar, {\em Einstein metrics on spheres}, Ann. of Math. (2) {\bf 162} (2005), 557--580.

\bibitem[Ca]{Ca99} D. Catlin, {\em The Bergman kernel and a theorem of Tian}, in {\em Analysis and geometry in several complex variables (Katata, 1997)}, 1--23, Trends Math., Birkh\"auser, Boston, 1999.



\bibitem[CM]{CM11} D. Coman and G. Marinescu, {\em Equidistribution results for singular metrics on line bundles}, preprint available at arXiv:1108.5163, 2011.

\bibitem[DLM1]{DLM04a} X.~Dai, K.~Liu, and X.~Ma, \emph{On the asymptotic expansion of {B}ergman kernel}, J.\ Differential Geom.\ \textbf{72} (2006), 1--41; announced in C. R. Math. Acad. Sci. Paris \textbf{339} (2004), 193--198.

\bibitem[DLM2]{DLM012} X.~Dai, K.~Liu, and X.~Ma, \emph{A remark on weighted Bergman kernels on orbifolds}, Math.\ Res.\ Lett.\ \textbf{19} (2012), 143--148.


\bibitem[D1]{D82} J.\ P.\ Demailly, {\em Estimations $L^2$ pour l'op\'erateur $\overline\partial$ d'un fibr\'e holomorphe semipositif au--dessus d'une vari\'et\'e k\"ahl\'erienne compl\`ete}, Ann.\ Sci.\ \'Ecole Norm.\ Sup.\ {\bf 15} (1982), 457--511.

\bibitem[D2]{D85} J.\ P.\ Demailly, {\em Mesures de Monge-Amp\`ere et caract\'erisation g\'eom\'etrique des vari\'et\'es alg\'ebriques affines}, M\'em.\ Soc.\ Math.\ France (N.S.) No. 19 (1985), 1--125.

\bibitem[D3]{D90} J.\ P.\ Demailly, {\em Singular Hermitian metrics on positive line bundles}, in {\em  Complex algebraic varieties (Bayreuth, 1990)}, Lecture Notes in Math. 1507, Springer, Berlin, 1992, 87--104.

\bibitem[D4]{D92} J.\ P.\ Demailly, {\em Regularization of closed positive currents and intersection theory}, J.\ Algebraic Geom.\ {\bf 1} (1992), 361--409.


\bibitem[D5]{D93b} J.\ P.\ Demailly, {\em A numerical criterion for very ample line bundles}, J.\ Differential Geom.\ {\bf 37} (1993), 323--374.


\bibitem[DMS]{DMS} T. C. Dinh, G. Marinescu and V. Schmidt, {\em Asymptotic distribution of zeros of holomorphic sections in the non compact setting}, J.\ Stat.\ Phys.\ \textbf{148} (2012), no.\ 1, 113-136.


\bibitem[DS]{DS06b} T. C. Dinh and N. Sibony, {\em Distribution des valeurs de transformations m\'eromorphes et applications}, Comment.\ Math.\ Helv.\ {\bf 81} (2006), 221--258.


\bibitem[D]{Don01}
S. K. Donaldson, \emph{Scalar curvature and projective embeddings. I.}, J.\ Differential Geom.\ \textbf{59} (2001), no.\ 3, 479--522.

\bibitem[EGZ]{EGZ09} P. Eyssidieux, V. Guedj and A. Zeriahi, {\em Singular K\"ahler-Einstein metrics},  J.\ Amer.\ Math.\ Soc.\ {\bf 22} (2009), 607--639.



\bibitem[FS]{FS95} J. E. Forn\ae ss and N. Sibony, {\em Oka's inequality for currents and  applications}, Math.\ Ann.\ {\bf 301} (1995), 399--419.



\bibitem[GK]{GK07} A. Ghigi and J. Koll\'ar, {\em K\"ahler-Einstein metrics on orbifolds and Einstein metrics on spheres}, Comment.\ Math.\ Helv.\ {\bf 82} (2007), 877--902.

\bibitem[G]{Gra56}
H.\ Grauert, \emph{Charakterisierung der Holomorphiegebiete durch die vollständige Kählersche Metrik}, Math.\ Ann.\ \textbf{131} (1956), 38--75.





\bibitem[Ho]{Ho} L.\ H\"ormander, {\em Notions of Convexity},  Reprint of the 1994 edition, Modern Birkh\"auser Classics, Basel, Birkh\"auser, 2007, viii, 414~p.

\bibitem[HsM]{HsM11} C.-Y.\ Hsiao and G. Marinescu, {\em The asymptotics for Bergman kernels for lower
energy forms and the multiplier ideal Bergman kernel asymptotics}, preprint available at arXiv:1112.5464, to appear in Comm.\ Anal.\ Geom.



\bibitem[MM1]{MM04} X. Ma and G. Marinescu, {\em Generalized Bergman kernels on symplectic manifolds},  C.\ R.\ Math.\ Acad.\ Sci.\ Paris {\bf 339} (2004), 493--498.

\bibitem[MM2]{MM07} X. Ma and G. Marinescu, {\em Holomorphic Morse Inequalities and Bergman Kernels}, Progress in Math., vol. 254, Birkh\"auser, Basel, 2007, xiii, 422 p.

\bibitem[MM3]{MM08} X. Ma and G. Marinescu, {\em Generalized Bergman kernels on symplectic manifolds}, Adv.\ Math.\ {\bf 217} (2008), 1756--1815.






\bibitem[N]{Nar60}
R.\ Narasimhan, \emph{Imbedding of holomorphically complete complex spaces}, Amer.\ J.\ Math.\ \textbf{82} (1960), 917--934. 


\bibitem[NV]{NoVo:98}
S.~Nonnenmacher and A.~Voros, \emph{Chaotic eigenfunctions in phase space}, J.\ Stat.\ Phys.\ \textbf{92} (1998), no.~3-4, 451--518.

\bibitem[O]{O87} T. Ohsawa, {\em Hodge spectral sequence and symmetry on compact K\"ahler spaces},  Publ.\ Res.\ Inst.\ Math.\ Sci.\ {\bf 23} (1987), 613--625.

\bibitem[OT]{OT87} T. Ohsawa and K. Takegoshi, {\em On the extension of $L^2$ holomorphic functions}, Math.\ Z.\ {\bf 195} (1987), 197--204.

\bibitem[RT]{RT11} 
J.\ Ross, and R.\ Thomas, 
\emph{Weighted projective embeddings, stability of orbifolds, and constant scalar curvature K\"ahler metrics},
J.\ Differential Geom.\ \textbf{88} (2011), no.\ 1, 109--159.

\bibitem[R]{Ru98} W. D. Ruan, {\em Canonical coordinates and Bergmann metrics}, Comm.\ Anal.\ Geom.\ {\bf 6} (1998), 589--631.



\bibitem[SZ1]{ShZ99} B. Shiffman and S. Zelditch, {\em Distribution of zeros of random and quantum chaotic sections of positive line bundles}, Comm.\ Math.\ Phys.\ {\bf 200} (1999), 661--683.

\bibitem[SZ2]{ShZ04} B. Shiffman and S. Zelditch, {\em Random polynomials with prescribed Newton polytope}, J.\ Amer.\ Math.\ Soc.\ {\bf 17} (2004), 49--108.

\bibitem[SZ3]{ShZ08} B. Shiffman and S. Zelditch, {\em  Number variance of random zeros on complex manifolds}, Geom.\ Funct.\ Anal.\ {\bf 18} (2008), 1422--1475.



\bibitem[T]{Ti90} G. Tian, {\em On a set of polarized K\"ahler metrics on algebraic manifolds}, J.\ Differential Geom.\ {\bf 32} (1990), 99--130.



\bibitem[Z]{Z98} S. Zelditch, {\em Szeg\"o kernels and a theorem of Tian}, Internat.\ Math.\ Res.\ Notices {\bf 1998}, no. 6, 317--331.

\bibitem[Y]{Yau87}
S.\ T.\ Yau, \emph{Nonlinear analysis in geometry}, Enseign.\ Math.\ (2) \textbf{33}
  (1987), no.~1-2, 109--158.


\end{thebibliography}
\end{document}